\def\blfootnote{\xdef\@thefnmark{}\@footnotetext}
\author{Pavel Shumyatsky}
\address{Department of Mathematics, University of Bras\'ilia,
Bras\'ilia-DF, 70910-900, Brazil }
\email{pavel@unb.br}
\keywords{Profinite groups, Centralizers}
\subjclass[2010]{20E18}
\title{Profinite groups with pronilpotent centralizers}
\newtheorem{theorem}{Theorem}[section]
\newtheorem{lemma}[theorem]{Lemma}
\newtheorem{problem}[theorem]{Problem}
\theoremstyle{definition}
\let\leq=\leqslant
\let\geq=\geqslant
\begin{document}

\begin{abstract} The article deals with profinite groups in which the centralizers are pronilpotent (CN-groups). It is shown that such groups are virtually pronilpotent. More precisely, let $G$ be a profinite CN-group, and let $F$ be the maximal normal pronilpotent subgroup of $G$. It is shown that $F$ is open and the structure of the finite quotient $G/F$ is described in detail.
\end{abstract}
\blfootnote{This work was supported by FAPDF and CNPq-Brazil}
\maketitle

\section{Introduction} Profinite groups in which all centralizers are pronilpotent are called CN-groups. Finite CN-groups are a classical subject in the theory of finite groups due to the role that they have played in the proof of the Feit--Thompson theorem which states that finite groups of odd order are soluble \cite{fetho}. Groups in which all centralizers are abelian are called CA-groups. Suzuki proved that finite CA-groups of odd order are soluble \cite{suzu57}. Next, Feit, Hall and Thompson extended Suzuki's result to CN-groups \cite{fehatho}. Finally, Feit and Thompson proved that all finite groups of odd order are soluble.

In recent years infinite CA-groups attracted significant interest due to their deep relation with residually free groups. Namely, finitely generated residually free CA-groups are limit groups that played a key role in the solutions of Tarski problems. Kochloukova and Zalesski introduced in \cite{kz11} a pro-p analog of limit groups via the operation of extension of centralizers which are pro-$p$ CA-groups. Further examples of pro-$p$ CA-groups include pro-$p$ completions of surface groups \cite{kz10} and of many
3-manifold groups \cite{wil17} (see also \cite{zz}).

The article \cite{ppz} deals with general questions on the structure of profinite CA-groups. It is shown there that profinite CA-groups are virtually pronilpotent. More precisely, it is shown that a profinite CA-group has an open normal subgroup $N$ which is either virtually abelian or virtually pro-$p$ for some prime $p$. Further, a rather detailed information about the finite quotient $G/N$ is obtained.

In the present article we take a more general approach and deal with profinite CN-groups. Our results can be summarized as follows.

\begin{theorem}\label{formida} Let $G$ be a profinite CN-group, and let $F$ be the maximal normal pronilpotent subgroup of $G$. Then $F$ is open and for the finite quotient $G/F$ one of the following occurs.
\begin{enumerate}
\item $G/F$ is cyclic.
\item $G/F$ is a direct product of a cyclic group of odd order and a (generalized) quaternion group.
\item $G/F$ is a Frobenius group with cyclic kernel of odd order and cyclic complement. In this case $F$ is pro-$p$ for some prime $p$.
\item $G/F$ is isomorphic to the group $SL(2,3)$. In this case $F$ is nilpotent and the order of $F$ is divisible by at least two primes one of which is $2$.
\item $G/F$ is almost simple and $F$ is a pro-$2$ group.
\end{enumerate}
\end{theorem}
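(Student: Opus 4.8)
The overall strategy is to reduce the profinite problem to the classification of finite CN-groups. The key elementary observation is that the CN property is inherited by every closed subgroup: if $H\le G$ is closed and $1\neq h\in H$, then $C_H(h)=C_G(h)\cap H$ is a closed subgroup of a pronilpotent group and hence pronilpotent. Thus one may freely pass to closed subgroups, Sylow subgroups, and centralizers. The first step is to extract the commuting structure: for every $1\neq x\in G$ the centralizer $C_G(x)$ is pronilpotent, and I would show it is contained in a unique maximal pronilpotent subgroup $M_x$, and that distinct such maximal pronilpotent subgroups intersect trivially. This yields a partition of $G\setminus\{1\}$ into the non-identity parts of a family of maximal pronilpotent subgroups, the profinite analogue of the nilpotent partition of a finite CN-group used in \cite{fehatho}. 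Establishing that these parts are \emph{isolated} (each contains the centralizer of each of its nontrivial elements) is the technical heart of this step and uses the CN hypothesis together with a coprime-action argument on Sylow subgroups.

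The second and hardest step is to prove that $F$ is open. Here the obstruction is that the CN property does not pass to continuous finite quotients $G/N$, so one cannot simply quote the finite classification for $G/N$. Writing $G=\varprojlim G/N$ over the open normal subgroups $N$, I would instead bound the indices $[G/N:FN/N]$ uniformly in $N$. Since $FN/N$ is nilpotent and, by the partition structure, the complementary action is Frobenius-like or of bounded simple type, matching this against the finite theory of \cite{fehatho} should give a bound on $[G/N:FN/N]$ independent of $N$. A compactness argument then yields $[G:F]=\sup_N[G/N:FN/N]<\infty$, so $F$ is open. The delicate point is arranging that the centralizer partition of $G$ descends faithfully to $G/N$ for cofinally many $N$, so that the finite CN-classification actually governs these quotients without the partition collapsing.

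With $F$ open, $\bar G:=G/F$ is a finite group and it remains to identify its isomorphism type among the five listed. I would split according to whether $\bar G$ is soluble. If $G$ is prosoluble, then the classical Frobenius/Fitting analysis of finite soluble CN-groups (as in \cite{fehatho}, ultimately resting on the odd-order theorem \cite{fetho}) yields cases (1)--(4): a cyclic Fitting quotient; a direct product of a cyclic group of odd order with a (generalized) quaternion group; a Frobenius group with cyclic kernel of odd order and cyclic complement, forcing $F$ to be pro-$p$ because the kernel acts fixed-point-freely on a single Sylow subgroup of $F$; or $SL(2,3)$, with the stated divisibility of $|F|$ read off from the action of $SL(2,3)$ on $F$. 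If $G$ is not prosoluble, the partition structure produces an almost simple section, and the classification of finite simple CN-groups, namely $PSL(2,q)$ and the Suzuki groups, together with their $2$-local structure, forces $\bar G$ to be almost simple and $F$ to be a pro-$2$ group, which is case (5).

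I expect the main obstacle to be the second step, the openness of $F$: reconciling the failure of CN to pass to quotients with the need to invoke the finite classification, and in particular arranging that the centralizer partition descends to the finite quotients. A secondary difficulty lies in the non-prosoluble case, where showing that $F$ is exactly pro-$2$ requires analysing the fixed points of the almost simple group on each Sylow subgroup of $F$ and invoking the odd-order theorem to exclude odd primes from the non-pronilpotent part.
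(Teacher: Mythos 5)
Your strategy diverges fundamentally from the paper's, and it contains gaps that I do not think can be repaired along the lines you sketch. The central problem is your reliance on the Feit--Hall--Thompson classification of finite CN-groups. The quotient $G/F$ of a profinite CN-group need not be a CN-group at all: in case (4) it is $SL(2,3)$, whose central involution has the whole non-nilpotent group as its centralizer, and in case (5) it is an almost simple group with no a priori CN restriction. Likewise the finite continuous quotients $G/N$ need not be CN (in the paper's third example $G/N\cong SL(2,3)$), as you yourself note. Consequently neither your plan to bound $[G/N:FN/N]$ by ``matching against the finite theory,'' nor your plan to read off cases (1)--(5) from the finite soluble and simple CN-classifications, has a valid starting point; the phrase about ``arranging that the centralizer partition descends faithfully to $G/N$ for cofinally many $N$'' names the obstruction without overcoming it, and no mechanism for such a descent is offered. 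The trivial-intersection partition of $G\setminus\{1\}$ into maximal pronilpotent subgroups, which you describe as the technical heart of your first step, is itself only asserted, and it is neither established nor used anywhere in the paper.

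The paper's actual proof of the openness of $F$ goes through an entirely different chain of ideas: the key Lemma \ref{tec} shows that an infinite abelian pro-$\pi'$ group normalizing a pro-$\pi$ subgroup must centralize it, and its consequences rest on Zelmanov's theorem on periodic compact groups \cite{zelm}, Khukhro's theorem on splitting automorphisms (Theorem \ref{khukhro}), and Herfort's theorem \cite{herfort}. From Lemma \ref{golova} one deduces a bound of $5$ on the Fitting height of prosoluble CN-groups (Lemma \ref{fh5}), then via the nonsoluble-length bound of \cite{KhSh:15} and Wilson's structure theorems \cite{Wilson:83} a characteristic series of bounded length (Lemma \ref{series}), and finally an induction along that series yields Theorem \ref{main}. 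None of these ingredients appears in your outline, and compactness plus the finite classification cannot substitute for them. The subsequent identification of $G/F$ is likewise a bespoke analysis of the action of $G/F$ on the infinite pronilpotent group $F$ (Lemmas \ref{ooo} through \ref{hap44}, Theorems \ref{mainso}, \ref{odd} and \ref{222}), not an application of the finite CN-theory; for instance, $F$ being pro-$2$ in the non-prosoluble case comes from the profinite analogue of Suzuki's theorem (Theorem \ref{odd}), proved by a minimal counterexample argument using the Baer--Suzuki theorem, rather than from the list of simple CN-groups.
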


Recall that the group $SL(2,3)$ has order 24 and is isomorphic to a semidirect product of the quaternion group $Q_8$ by the cyclic group of order 3 which acts on $Q_8$ nontrivially. Recall also that a group is almost simple if it contains a non-abelian simple group and is contained within the automorphism group of that simple group.

An immediate corollary of the above theorem is that the prosoluble radical in a profinite CN-group either is the whole group or is a pro-2 group. For finite CN-groups this fact was established by Suzuki \cite{suzu61}. 

In the end of the article we give explicit examples of profinite CN-groups showing that indeed none of the alternatives mentioned in Theorem \ref{formida} can be omitted. It is worth mentioning that the example where $G/F$ is almost simple and $F$ is a pro-$2$ group is not finitely generated. We have no reason to suspect that every finitely generated infinite profinite CN-group is necessarily prosoluble. Therefore the following problem is natural.

\begin{problem}\label{111} Find an example of a finitely generated infinite profinite CN-group which is not prosoluble.
\end{problem}

We remark that all known examples of infinite profinite groups with abelian centralizers are prosoluble. The problem of finding a non-prosoluble one was raised in \cite{ppz}.

The next section contains (mostly well-known) results on automorphisms of finite and profinite groups. In Section 3 we show that profinite CN-groups are virtually pronilpotent. Section 4 contains a number of useful lemmas clarifying the structure of profinite CN-groups. Theorem \ref{formida} is established in Section 5. The notation used in this paper is standard.

\section{Automorphisms of finite and profinite groups}
Throughout this paper, every homomorphism of profinite groups is continuous, and every subgroup is closed. A cyclic group is always finite.

Many results of the theory of finite groups admit natural interpretation for profinite groups. This can be exemplified by the Sylow theorems, the Frattini argument, and so on. Throughout the article we use certain profinite versions of facts on finite groups without explaining in detail how the results on profinite groups can be deduced from the corresponding ones on finite groups. On all such occasions the deduction can be performed via the routine inverse limit argument. 

In the present auxiliary section we concentrate on automorphisms of finite and profinite groups. If $A$ is a group of automorphisms of a group $G$, the subgroup generated by elements of the form $g^{-1}g^\alpha$ with $g\in G$ and $\alpha\in A$ is denoted by $[G,A]$. It is well-known that the subgroup $[G,A]$ is an $A$-invariant normal subgroup in $G$. We also write $C_G(A)$ for the centralizer  of $A$ in $G$ and $A^\#$ for the set of nontrivial elements of $A$. Most of results given here were also discussed in \cite{ppz}.

Our first two lemmas provide a list of well-known facts on coprime  actions (see for example \cite[Ch.~5 and 6]{go}).  Here $|G|$ means the order of the profinite group $G$ (see for example \cite{rz}). A detailed proof of the profinite version of item (iii) of Lemma \ref{cc} can be found in \cite{Shumyatsky:98}. A proof of Lemma \ref{ccc} can be found in \cite{ppz}.

\begin{lemma}\label{cc}
Let  $A$ be a finite group of automorphisms of a profinite group $G$ such that $(|G|,|A|)=1$. Then
\begin{enumerate}
\item[(i)] $G=[G,A]C_{G}(A)$.
\item[(ii)] $[G,A,A]=[G,A]$. 
\item[(iii)] $C_{G/N}(A)=C_G(A)N/N$ for any $A$-invariant normal subgroup $N$ of $G$.
\item[(iv)] If $G$ is pronilpotent and $A$ is a noncyclic abelian group, then $G=\prod_{a\in A^{\#}}C_{G}(a)$.
\end{enumerate}
\end{lemma}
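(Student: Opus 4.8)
The plan is to deduce all four statements from their finite-group analogues by the routine inverse limit argument. Writing $\mathcal{N}$ for the set of open $A$-invariant normal subgroups of $G$, these form a base of neighbourhoods of $1$: for any open normal $M$ the finite intersection $\bigcap_{a\in A}M^{a}$ is again open, normal and $A$-invariant. Hence $G=\varprojlim_{N\in\mathcal{N}}G/N$, each $G/N$ is a finite group on which $A$ acts, and $(|G/N|,|A|)=1$ because $|G/N|$ divides the supernatural number $|G|$. So it suffices to prove (i)--(iv) for finite $G$ and pass to the limit; the only point needing care in the passage is that centralizers are compatible across the system, which is exactly the content of (iii) and is secured by compactness (an inverse limit of nonempty finite sets is nonempty).

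For the finite statements I would prove (iii) first and then read off (i) and (ii). For (iii) the inclusion $C_{G}(A)N/N\subseteq C_{G/N}(A)$ is trivial, so the issue is to show that every $A$-fixed coset contains an $A$-fixed element. First I reduce to the case where $A$ acts trivially on $G/N$: replacing $G$ by the preimage $M$ of $C_{G/N}(A)$, which is $A$-invariant and contains $N$, it is enough to prove $M=C_{M}(A)N$. I then reduce to $N$ abelian by induction on $|N|$, applying (iii) to the sections determined by a nontrivial $A$-invariant characteristic subgroup such as $[N,N]$. When $N$ is abelian the map $a\mapsto g^{-1}g^{a}$ is a $1$-cocycle of $A$ in $N$, and coprimality gives $H^{1}(A,N)=0$; the cocycle is therefore a coboundary, which yields $n\in N$ with $(gn)^{a}=gn$ for all $a$, so $gn\in C_{G}(A)$ and $g\in C_{G}(A)N$. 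Granting (iii), statement (i) follows by taking $N=[G,A]$: since $A$ centralizes $G/[G,A]$ we get $G/[G,A]=C_{G/[G,A]}(A)=C_{G}(A)[G,A]/[G,A]$, whence $G=[G,A]C_{G}(A)$. Then (ii) follows from (i) applied in $\bar G=G/[G,A,A]$, where $[\bar G,A,A]=1$ forces $[\bar G,A]\le C_{\bar G}(A)$, so $\bar G=[\bar G,A]C_{\bar G}(A)=C_{\bar G}(A)$; thus $A$ is trivial on $\bar G$ and $[G,A]=[G,A,A]$.

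Statement (iv) I would prove by two reductions followed by a module-theoretic core. As $G$ is pronilpotent it is the direct product of its Sylow subgroups $G_{p}$, each characteristic and hence $A$-invariant, and $C_{G}(a)=\prod_{p}C_{G_{p}}(a)$; so it suffices to treat a pro-$p$ group. There I pass to $V=G/\Phi(G)$: by (iii), $C_{V}(a)=C_{G}(a)\Phi(G)/\Phi(G)$, so once $V=\sum_{a\in A^{\#}}C_{V}(a)$ is known, the subgroup generated by the $C_{G}(a)$ covers $G/\Phi(G)$ and therefore equals $G$ by the non-generator property of the Frattini subgroup. Finally, for the elementary abelian module $V$ with coprime noncyclic abelian $A$, Maschke's theorem writes $V$ as a direct sum of irreducible $A$-modules, so I may assume $V$ irreducible. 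For an abelian group acting irreducibly in coprime characteristic, Schur's lemma identifies the commuting ring with a finite field, so the image of $A$ in $\mathrm{GL}(V)$ embeds in a cyclic multiplicative group; since $A$ is noncyclic its kernel on $V$ is nontrivial, and a nontrivial $a$ in that kernel satisfies $V=C_{V}(a)$. Summing over the irreducible summands gives $V=\sum_{a\in A^{\#}}C_{V}(a)$.

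The main obstacle is the finite case of (iii) for nonabelian $N$: the cocycle argument applies only when $N$ is abelian, so the real force of the statement lies in the inductive reduction along abelian $A$-sections and in verifying that the partial conclusions patch together. A secondary technical point is the profinite passage itself, where one must know that the inverse system of $A$-fixed cosets is surjective so that a compatible family of fixed points lifts to an element of $C_{G}(A)$; but this is precisely where compactness, fed by the finite version of (iii), does the work.
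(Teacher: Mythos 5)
The paper offers no proof of this lemma: it is presented as a list of well-known facts on coprime actions, with references to Gorenstein (Ch.~5 and 6) for the finite statements, to \cite{Shumyatsky:98} for the profinite version of (iii), and to \cite{ppz}. Your argument is essentially the standard one behind those references (reduction to finite quotients, vanishing of $H^{1}$ in coprime order for (iii), Maschke plus Schur for (iv)), so in substance you are reconstructing the cited proofs rather than taking a different route.

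Two points need attention, one of which is a genuine gap as written. In (ii) you pass to $\bar G=G/[G,A,A]$, but the normality of $[G,A,A]$ in $G$ is not automatic: the general fact that $[N,U]$ is normal in $\langle N,U\rangle$ only gives that $[[G,A],A]$ is normal in $[G,A]A$, and for $g\in G$ one only gets $[[G,A],A]^{g}\le[[G,A],A]\cdot[G,A]'$, so normality in all of $G$ is essentially equivalent to the identity you are trying to prove. The standard repair avoids the quotient: by (i), $G=[G,A]C_{G}(A)$; any $c\in C_{G}(A)$ normalizes $[G,A]$ and centralizes $A$, hence normalizes $[[G,A],A]$; a generator $g^{-1}g^{\alpha}$ with $g=hc$, $h\in[G,A]$, then equals $(h^{-1}h^{\alpha})^{c}\in[[G,A],A]$, which gives $[G,A]\le[G,A,A]$ directly. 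Secondly, in (iv) the Frattini-quotient reduction only shows that the subgroups $C_{G}(a)$ \emph{generate} $G$ (in a $p$-group the setwise product of generating subgroups can be a proper subset, as in an extraspecial group of order $p^{3}$ generated by two cyclic subgroups of order $p$). Generation is all the paper ever uses (Lemmas \ref{eleme} and \ref{golova}), but the literal statement $G=\prod_{a\in A^{\#}}C_{G}(a)$ is usually obtained by inducting along the lower central series rather than jumping to $G/\Phi(G)$. The remaining parts — the cocycle argument for (iii), the deduction of (i), the Schur-lemma step in (iv), and the compactness argument for the profinite passage — are correct.
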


\begin{lemma}\label{ccc}
Let  $\alpha$ be an automorphism of a finite group $G$ such that $(|G|,|\alpha|)=1$. 
\begin{enumerate}
\item[(i)] If $G$ is cyclic of $2$-power order, then $\alpha=1$.
\item[(ii)] If $G$ is cyclic of prime-power order, then either $\alpha=1$ or $C_G(\alpha)=1$.
\item[(iii)] If $G$ is a (generalized) quaternion group, then either $\alpha=1$ or $|\alpha|=3$ and $|G|=8$.
\end{enumerate}
\end{lemma}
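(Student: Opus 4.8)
The plan is to treat the three parts separately, in each case exploiting explicit knowledge of the automorphism group of the group in question together with the coprimality hypothesis. The single most useful observation is that $(|G|,|\alpha|)=1$ forces $\alpha$ to have order prime to the relevant prime; since $G$ is a $2$-group in (i) and (iii) and a $p$-group in (ii), this means $\alpha$ has odd order in (i) and (iii) and order prime to $p$ in (ii).

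For (i), I would use that the automorphism group of a cyclic group of order $2^n$ is itself a $2$-group (it is trivial for $n\le 1$, of order $2$ for $n=2$, and isomorphic to $C_2\times C_{2^{n-2}}$ for $n\ge 3$). Since $\alpha$ has odd order and the only odd-order element of a $2$-group is the identity, we get $\alpha=1$ at once. For (ii), identify $G$ with $\mathbb{Z}/p^n\mathbb{Z}$ and $\alpha$ with multiplication by a unit $k\in(\mathbb{Z}/p^n\mathbb{Z})^\times$, so that $\operatorname{Aut}(G)\cong(\mathbb{Z}/p^n\mathbb{Z})^\times$ and coprimality means the order of $k$ is prime to $p$. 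The case $p=2$ follows from (i). For any $p$, the kernel of the reduction map $(\mathbb{Z}/p^n\mathbb{Z})^\times\to(\mathbb{Z}/p\mathbb{Z})^\times$ is a $p$-group, so a unit of order prime to $p$ that is $\equiv 1\pmod p$ must equal $1$. Hence if $\alpha\ne 1$ then $k\not\equiv 1\pmod p$, so $k-1$ is again a unit modulo $p^n$; as the fixed points are exactly the $g^i$ with $i(k-1)\equiv 0\pmod{p^n}$, this forces $i\equiv 0$, i.e. $C_G(\alpha)=1$.

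For (iii), write $G$ as a generalized quaternion group $\langle x,y\rangle$ of order $2^n$ with $n\ge 3$, where $\langle x\rangle$ is cyclic of order $2^{n-1}$, and again note that $\alpha$ has odd order. The decisive case split is $n=3$ versus $n\ge 4$. For $n\ge 4$ the subgroup $\langle x\rangle$ is the unique cyclic maximal subgroup, hence characteristic, so $\alpha$ restricts to an odd-order automorphism of a cyclic $2$-group, which is trivial by (i); thus $\alpha$ fixes $\langle x\rangle$ pointwise. Writing $\alpha(y)=yx^i$ and using $\alpha(x)=x$, one computes $\alpha^m(y)=yx^{mi}$, so the order of $\alpha$ equals the smallest $m$ with $2^{n-1}\mid mi$, a power of $2$; being odd, it must be $1$, whence $\alpha=1$. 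For $n=3$, I would instead invoke $\operatorname{Aut}(Q_8)\cong S_4$, whose odd-order elements have order $1$ or $3$, yielding exactly the stated alternative $|\alpha|=3$ and $|G|=8$.

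The main obstacle — really the only point needing genuine care — is the step $n\ge 4$ in (iii): one must justify that the cyclic maximal subgroup $\langle x\rangle$ is characteristic (equivalently, that $\operatorname{Aut}(Q_{2^n})$ is a $2$-group for $n\ge 4$) before the reduction to (i) applies. This follows because, for $n\ge 4$, the remaining two maximal subgroups of $Q_{2^n}$ are themselves generalized quaternion rather than cyclic, so $\langle x\rangle$ is the only cyclic subgroup of index $2$; the phenomenon fails precisely at $n=3$, where all three maximal subgroups of $Q_8$ are cyclic and get permuted by the order-$3$ automorphisms. Everything else is a direct consequence of the explicit structure of the relevant automorphism groups.
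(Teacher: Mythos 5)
Your argument is correct in all three parts: the computation of $\operatorname{Aut}(C_{2^n})$ as a $2$-group, the unit-group analysis of $(\mathbb{Z}/p^n\mathbb{Z})^\times$ via the $p$-group kernel of reduction mod $p$, and the case split for generalized quaternion groups (with $\langle x\rangle$ characteristic for $n\ge 4$ and $\operatorname{Aut}(Q_8)\cong S_4$ for $n=3$) are all sound, and you correctly isolate the one delicate point, namely why $Q_8$ is exceptional. The paper itself gives no proof of this lemma --- it is stated as a list of well-known facts with the proof deferred to the reference \cite{ppz} --- so there is no in-paper argument to compare against; your direct verification through the explicit structure of the relevant automorphism groups is the standard route and fully adequate.
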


The next lemma is Lemma 2.3 in \cite{ppz}.
\begin{lemma}\label{rankexpo}
If $A$ is a noncyclic group of order $p^2$ acting 
on an additive abelian group $G$,
then $pG\subseteq\sum_{a\in A^\#} C_G(a)$.
\end{lemma}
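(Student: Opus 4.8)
The plan is to exploit the fact that a noncyclic group of order $p^2$ is elementary abelian, hence isomorphic to $\mathbb{Z}/p\times\mathbb{Z}/p$, and in particular possesses exactly $p+1$ subgroups of order $p$, say $H_1,\dots,H_{p+1}$, whose sets of nontrivial elements partition $A^\#$ (each nontrivial element of $A$ generating a unique subgroup of order $p$). For each such subgroup I would introduce the additive norm operator $N_i\colon G\to G$ given by $N_i(g)=\sum_{a\in H_i}a\cdot g$. A one-line calculation shows that $b\cdot N_i(g)=N_i(g)$ for every $b\in H_i$, since left multiplication by $b$ merely permutes $H_i$; hence $N_i(g)\in C_G(H_i)\subseteq C_G(a)$ for any nontrivial $a\in H_i$, and therefore $N_i(g)$ lies in the subgroup $\sum_{a\in A^\#}C_G(a)$.

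Next I would form the global norm $S=\sum_{a\in A}a\cdot g$, which is $A$-invariant and so lies in $C_G(A)\subseteq\sum_{a\in A^\#}C_G(a)$ as well. The crux is then a bookkeeping identity: summing $N_i(g)$ over all $p+1$ subgroups counts the identity contribution $g$ exactly $p+1$ times and each nontrivial contribution $a\cdot g$ exactly once, so that $\sum_{i=1}^{p+1}N_i(g)=(p+1)g+\sum_{a\in A^\#}a\cdot g=(p+1)g+(S-g)=pg+S$. Rearranging gives the key relation $pg=\sum_{i=1}^{p+1}N_i(g)-S$.

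Finally, since $\sum_{a\in A^\#}C_G(a)$ is a subgroup of $G$ and each of the terms $N_1(g),\dots,N_{p+1}(g)$ and $S$ on the right-hand side lies in it, the element $pg$ lies in it too, which is precisely the asserted inclusion; as $g$ is arbitrary, this finishes the argument. Note that the norm sums are finite, so no topological or convergence considerations arise even if $G$ is infinite.

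As for the main obstacle, the point is that the action is deliberately \emph{not} coprime: $|A|=p^2$ shares the prime $p$ with the possible $p$-torsion of $G$, so one cannot simply divide by $|H_i|=p$ and retract onto fixed points as one does in the coprime setting of Lemma~\ref{cc}. The factor $p$ on the left is exactly what the counting identity manufactures, and the only thing one must get right is that the overcounting of the identity element across the $p+1$ subgroups produces the coefficient $(p+1)-1=p$ and nothing else. I expect no serious difficulty beyond verifying this combinatorial bookkeeping together with the elementary $H_i$-invariance of the norm elements.
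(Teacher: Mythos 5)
Your argument is correct and complete: the $p+1$ subgroups of order $p$ partition $A^\#$, each norm element $N_i(g)$ lies in $C_G(H_i)$, and the counting identity $\sum_i N_i(g)=pg+S$ does exactly what you claim. The paper itself gives no proof of this lemma (it simply cites Lemma 2.3 of \cite{ppz}), but your averaging argument over the $p+1$ cyclic subgroups is the standard proof of this fact and is, in substance, the one given in that reference.
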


Recall that a finite group $G$ is called a group of Frobenius if $G$ is a product of its normal subgroup $K$ (called kernel) and a nontrivial subgroup $H$ (called complement) such that $H\cap H^g=1$ for each $g\in G$. The reader is referred to \cite{rz} for the theory of profinite Frobenius groups. The following lemma is immediate from \cite[Lemma 2.4]{khumashu}.
\begin{lemma}\label{fro}
Suppose that a profinite group $G$ admits a finite Frobenius group of automorphisms $FH$ 
with kernel $F$ and complement $H$ such that $C_G(F)=1$ and $(|G|,|F|)=1$.
Then $G=\langle C_G(H)^f\ \vert\ f\in F\rangle$.
\end{lemma}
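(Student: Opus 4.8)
The plan is to reduce first to the case of a finite group $G$ via the inverse limit argument licensed above, and then to argue by induction on $|G|$. Write $N=\langle C_G(H)^f\mid f\in F\rangle$ for the subgroup that must be shown to equal $G$. The first routine step is to record that $N$ is $FH$-invariant: the kernel permutes the generators through $(C_G(H)^f)^{f'}=C_G(H)^{ff'}$, while the complement permutes them through its conjugation action on $F$, since $C_G(H)$ is pointwise fixed by $H$ and $(C_G(H)^f)^h=C_G(H)^{f^h}$ for $h\in H$, with $f^h\in F$. A second preliminary remark is that, because the action of $F$ is coprime and $C_G(F)=1$, Lemma \ref{cc}(i) gives $G=[G,F]C_G(F)=[G,F]$, so that $F$ already acts without nonzero fixed points on every relevant section.

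For the inductive step one would pass to $G/K$, where $K$ is a minimal $FH$-invariant normal subgroup. By Lemma \ref{cc}(iii) the hypothesis $C_{G/K}(F)=1$ survives, so the inductive hypothesis applies to $G/K$, and the task becomes to combine the resulting generation in $G/K$ with the corresponding statement for the $FH$-chief factor $K$. The representative and most transparent instance of the chief-factor analysis is the case in which $G$ is an irreducible $FH$-module $V$ over a field of characteristic $p\nmid|F|$ with $C_V(F)=0$; there one must prove that the $F$-submodule $W=\langle C_V(H)^f\mid f\in F\rangle$ equals $V$.

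Because $p\nmid|F|$, the restriction $V|_F$ is semisimple with no trivial constituent, and by Clifford theory $H$ permutes its $F$-homogeneous components; splitting $V$ into $H$-orbits of components reduces matters to the transitive case, where projection onto a single component identifies $C_V(H)$ with the fixed points of its stabiliser and thereby reduces to the $F$-homogeneous situation $V\cong S^{k}$. In that situation $W=V$ holds as soon as $C_V(H)$ is contained in no proper $F$-submodule, i.e. as soon as no nonzero $F$-homomorphism $V\to S$ annihilates $C_V(H)$, which is a direct $\mathrm{Hom}$-duality computation. As supporting evidence that fixed points of the complement are abundant, when moreover $p\nmid|H|$ one has a clean trace count: averaging over $FH$ and using $C_V(FH)\subseteq C_V(F)=0$ forces $\sum_{h\in H^{\#}}\mathrm{tr}(h\mid V)=0$, whence $\dim C_V(H)=\dim V/|H|\neq 0$.

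The main obstacle is exactly the point at which this induction is delicate: we are given coprimality only with the kernel $F$, never with the complement $H$. Consequently $H$-fixed points need not lift through $FH$-invariant sections, so in the modular case $p\mid|H|$ the averaging argument above is unavailable and one cannot simply assert that $C_{G/K}(H)$ is the image of $C_G(H)$. Controlling $C_V(H)$ characteristic-freely---showing that it still generates $V$ as an $F$-module when $p\mid|H|$, and likewise handling non-abelian chief factors---is the genuine content here, and it is precisely what is supplied by \cite[Lemma 2.4]{khumashu}, from which the stated lemma is immediate.
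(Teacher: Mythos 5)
Your proposal ends exactly where the paper's proof begins and ends: the paper offers no argument beyond the remark that the statement is immediate from \cite[Lemma 2.4]{khumashu}, which is precisely the citation you fall back on after correctly locating the genuine difficulty (lifting $H$-fixed points through $FH$-invariant sections without coprimality to $|H|$). So the approach matches the paper's; your preliminary reductions and your diagnosis of the modular obstruction are sound but play no role in the paper, which simply quotes the external result.
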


Recall that an automorphism $\alpha$ of order $p$ of a group $G$ is {\it splitting} if $$x\cdot x^\alpha\cdot x^{\alpha^2}\cdot\dots\cdot x^{\alpha^{p-1}}=1$$ for all $x\in G$. Obviously, the automorphism $\alpha$ is splitting if and only if $x\alpha$ has order $p$ in the semidirect product $G\langle\alpha\rangle$ for all $x\in G$. Khukhro proved that if a $d$-generator nilpotent group $G$ admits a splitting automorphism of prime order $p$, then $G$ is nilpotent of class bounded in terms of $d$ and $p$ only \cite{khu61} (see also \cite[Theorem 7.2.1]{khu2}). The following theorem is an immediate corollary of Khukhro's result. Recall that a group is said to locally have some property if every finitely generated subgroup has that property.

\begin{theorem}\label{khukhro} Let $p$ be a prime and  $G$ a pro-$p$ group admitting a splitting automorphism  of order $p$. Then $G$ is locally nilpotent.
\end{theorem}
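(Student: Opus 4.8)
The plan is to deduce local nilpotency from the quoted theorem of Khukhro by passing to finite quotients. It suffices to prove that an arbitrary $d$-generated subgroup $H=\langle h_1,\dots,h_d\rangle$ of $G$ is nilpotent. Since $H$ need not be $\alpha$-invariant, I would first replace it by $K$, the closed $\langle\alpha\rangle$-invariant subgroup generated by $H$. This $K$ is topologically generated by the $dp$ elements $h_j^{\alpha^i}$ with $1\le j\le d$ and $0\le i\le p-1$, it is again a pro-$p$ group, and the splitting identity, holding for every element of $G$, holds in particular for every element of $K$; thus $\alpha$ restricts to a splitting automorphism of $K$ of order dividing $p$. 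Because $H\le K$, it is enough to show that $K$ is nilpotent.

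Next I would exploit the profinite structure. The $\langle\alpha\rangle$-invariant open normal subgroups of $K$ are cofinal among all open normal subgroups: given any open normal $N$, the subgroup $\bigcap_{i=0}^{p-1}\alpha^i(N)$ is $\alpha$-invariant, open, and contained in $N$. Hence $K=\varprojlim_M K/M$ as $M$ runs over them, and for each such $M$ the finite quotient $K/M$ is a $p$-group generated by at most $dp$ elements, on which $\alpha$ induces a splitting automorphism $\bar\alpha$ of order dividing $p$; in particular each $K/M$ is nilpotent.

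The heart of the argument is to bound the nilpotency class of $K/M$ independently of $M$. When $\bar\alpha$ has order exactly $p$, the theorem of Khukhro applies directly to the finite nilpotent group $K/M$ and yields class at most $c:=f(dp,p)$, where $f$ is Khukhro's bounding function. To see that this single estimate governs enough quotients, I would restrict attention to the subfamily of those $M$ with $[K,\alpha]\not\le M$. Provided $[K,\alpha]\ne 1$, this subfamily is again cofinal: for any $1\ne y\in K$ and a fixed $1\ne x\in[K,\alpha]$ one finds an $\alpha$-invariant open normal $M$ excluding both $x$ and $y$, and then $[K,\alpha]\not\le M$ while $y\notin M$. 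On this subfamily $\bar\alpha\ne 1$, so Khukhro's bound applies uniformly. The remaining possibility is that $\alpha$ centralizes $K$, in which case the splitting identity forces $K$ to have exponent $p$; here $K/M$ is a $dp$-generated group of exponent $p$, and the same class bound is available, either from the restricted Burnside problem for prime exponent or by applying Khukhro's theorem to a suitable inner splitting automorphism of order $p$.

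Finally, once $\gamma_{c+1}(K/M)=1$ for a cofinal family of $M$, the image of $\gamma_{c+1}(K)$ in each $K/M$ is trivial, so $\gamma_{c+1}(K)\le\bigcap_M M=1$, and thus $K$, and hence $H$, is nilpotent of class at most $c$. The main obstacle is precisely the uniformity of the class bound: Khukhro's theorem controls the class of one nilpotent group carrying a genuine order-$p$ splitting automorphism, and the real work lies in arranging the finite quotients of $K$ so that such an automorphism is present with a single bound $f(dp,p)$, and in disposing of the degenerate quotients on which the induced automorphism collapses.
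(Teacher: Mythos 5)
Your proposal is correct, and it is exactly the ``routine inverse limit argument'' that the paper invokes implicitly: the author gives no proof of this theorem, declaring it an immediate corollary of Khukhro's bound for $d$-generator nilpotent groups, and your reduction (pass to the $\langle\alpha\rangle$-invariant closure of a finitely generated subgroup, quotient by $\alpha$-invariant open normal subgroups, apply Khukhro's uniform class bound to the finite $p$-group quotients, and take the inverse limit) is precisely that deduction. Your careful treatment of the degenerate quotients where $\bar\alpha$ becomes trivial is the one point the paper glosses over, and your fallback (exponent $p$ plus the restricted Burnside bound, or equivalently the fact that Khukhro's theorem subsumes the case $\alpha^p=1$ with $\alpha$ possibly trivial) closes it correctly.
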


We conclude this section by stating the  profinite version of the result which   is a combination of famous results of  Thompson \cite{Thompson:59} (saying that the group is nilpotent) and Higman  \cite{Higman:57} (bounding the nilpotency class). Its proof can be found in \cite{Shumyatsky:98} for example.

\begin{theorem}\label{higm} Let $p$ be a prime and  $G$ a pro-$p'$ group admitting a fixed-point-free automorphism  of order $p$. Then $G$ is nilpotent of class bounded by some number depending only on $p$.
\end{theorem}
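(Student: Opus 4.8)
The plan is to reduce the profinite statement to the finite theorem of Thompson and Higman via the standard inverse limit argument, while keeping careful track of uniformity in the class bound. Let $\alpha$ denote the given fixed-point-free automorphism of order $p$, so that $C_G(\alpha)=1$, and recall that since $G$ is pro-$p'$ the order $|G|$ is coprime to $p=|\alpha|$. Thus the action of $\langle\alpha\rangle$ on $G$ is coprime and the facts collected in Lemma \ref{cc} are available.

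First I would produce a base of neighborhoods of the identity consisting of $\alpha$-invariant open normal subgroups. Given any open normal subgroup $N\trianglelefteq G$, the finite intersection $M=\bigcap_{i=0}^{p-1}N^{\alpha^i}$ is again open and normal, is $\alpha$-invariant by construction, and is contained in $N$. Hence such subgroups $M$ form a base at $1$, and $G=\varprojlim_M G/M$, where $M$ ranges over the $\alpha$-invariant open normal subgroups of $G$.

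Next I would analyze each finite quotient. Fix an $\alpha$-invariant open normal subgroup $M$; then $\alpha$ induces an automorphism $\bar\alpha$ of the finite group $G/M$ of order dividing $p$. Because the action is coprime, Lemma \ref{cc}(iii) gives $C_{G/M}(\alpha)=C_G(\alpha)M/M=M/M$, so $\bar\alpha$ is fixed-point-free on $G/M$. If $\bar\alpha=1$ this forces $G/M=1$; otherwise $\bar\alpha$ is a fixed-point-free automorphism of order exactly $p$ of the finite $p'$-group $G/M$. In either case the combination of Thompson's theorem (nilpotency) and Higman's theorem (class bound) applies and yields that $G/M$ is nilpotent of class at most $h(p)$, where $h(p)$ depends only on $p$ and not on the chosen $M$.

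Finally I would pass to the limit. The decisive point is that Higman's bound $h(p)$ is uniform across all the quotients, so the law $[x_1,\dots,x_{h(p)+1}]=1$ holds in every $G/M$; consequently the same identity holds in $G=\varprojlim_M G/M$, and $G$ is nilpotent of class at most $h(p)$. The only step that requires genuine care is the verification that the induced automorphism is fixed-point-free on each finite quotient, which is precisely what coprimality supplies through Lemma \ref{cc}(iii); beyond this bookkeeping I do not anticipate any real obstacle.
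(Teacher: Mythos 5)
Your proof is correct and is exactly the routine inverse limit deduction the paper has in mind: it gives no proof of Theorem \ref{higm}, deferring to \cite{Shumyatsky:98}, and the argument there is the same reduction to $\alpha$-invariant open normal subgroups, the coprime-action fact $C_{G/M}(\alpha)=C_G(\alpha)M/M$, and the uniformity of Higman's bound so that the nilpotency law of class $h(p)$ passes to the limit. Nothing is missing.
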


\section{Profinite CN-groups are virtually pronilpotent}
In the present section we establish the part of Theorem \ref{formida} that states that profinite CN-groups are virtually pronilpotent. The following lemma is immediate from \cite[Lemma 2.8.15]{rz}.

\begin{lemma}\label{mmain} Let $G$ be a profinite group and $N$ a normal open subgroup of $G$. There exists a subgroup $H$ of $G$ such that 
$G=NH$ and $N\cap H$ is pronilpotent.
\end{lemma}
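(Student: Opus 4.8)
The plan is to obtain $H$ as a \emph{minimal supplement} to $N$ and then to show that the obstruction $N\cap H$ is forced inside the Frattini subgroup of $H$. Since $N$ is open, the quotient $G/N$ is finite, so closed subgroups $H\le G$ with $G=NH$ certainly exist (for instance $H=G$). First I would invoke \cite[Lemma 2.8.15]{rz} to select, among all such supplements, one that is minimal with respect to inclusion. The existence of a minimal element is a routine compactness argument: the intersection of a descending chain of closed subgroups, each of which projects onto the finite group $G/N$, still projects onto $G/N$, so Zorn's lemma applies to the poset of closed supplements ordered by reverse inclusion.

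The heart of the matter is the claim that $N\cap H\subseteq\Phi(H)$, where $\Phi(H)$ is the Frattini subgroup of $H$. Suppose this fails. Then $N\cap H$ is not contained in some maximal open subgroup $M$ of $H$, and maximality gives $H=M(N\cap H)$. Substituting into $G=NH$ yields $G=NM(N\cap H)=NM$, since $N\cap H\le N$ and $N$ is normal. But $M$ is a proper closed subgroup of $H$ satisfying $NM=G$, so $M$ is a strictly smaller supplement to $N$, contradicting the minimality of $H$. Hence every maximal open subgroup of $H$ contains $N\cap H$, which is precisely the statement $N\cap H\subseteq\Phi(H)$.

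To conclude, I would appeal to two standard facts about profinite groups: the Frattini subgroup $\Phi(H)$ of a profinite group is pronilpotent, and every closed subgroup of a pronilpotent group is again pronilpotent, being the product of its own Sylow subgroups. Combining these with $N\cap H\subseteq\Phi(H)$ shows that $N\cap H$ is pronilpotent, as required. The step I expect to demand the most care is the very first one, namely guaranteeing that a minimal closed supplement genuinely exists in the profinite category; once the objects involved ($M$, $N\cap H$, and their products) are verified to be closed, the remaining arguments transfer verbatim from the finite setting via the inverse limit argument, and this is exactly the content supplied by \cite[Lemma 2.8.15]{rz}.
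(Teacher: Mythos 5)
Your argument is correct: the minimal closed supplement exists by the compactness argument you describe, the Frattini argument forces $N\cap H\subseteq\Phi(H)$, and the pronilpotency of the Frattini subgroup of a profinite group finishes the proof. The paper itself gives no proof but simply cites \cite[Lemma 2.8.15]{rz}, whose statement and standard proof are exactly what you have reconstructed, so your route coincides with the paper's.
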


In what follows $\pi(G)$ denotes the set of prime divisors of the order of a profinite group $G$. An element $x\in G$ is a $\pi$-element (for a set of primes $\pi$) if $\pi(\langle x\rangle)\subseteq\pi$, where $\langle x\rangle$ denotes the procyclic subgroup generated by $x$. An element $x\in G$ is a $\pi'$-element if $\pi(\langle x\rangle)\cap\pi=\emptyset$. We say that two elements $x,y$ of $G$ have coprime orders if $\pi(\langle x\rangle)\cap\pi(\langle y\rangle)=\emptyset$. We write $O_p(G)$ to denote the maximal normal pro-$p$ subgroup of $G$.

The proof of the next theorem uses the fact that a profinite group $G$ is pronilpotent if and only if any two elements of $G$ of coprime orders commute. In the sequel this fact will be used throughout the article without being explicitly mentioned. 
\begin{theorem} \label{32}
A torsion-free profinite CN-group is pronilpotent. 
\end{theorem}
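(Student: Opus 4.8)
The plan is to use the criterion recalled just above the statement: a profinite group is pronilpotent precisely when any two elements of coprime orders commute. So it suffices to show that arbitrary $x,y\in G$ of coprime orders commute. First I would reduce to prime-power elements. Since $G$ is torsion-free, the procyclic group $\langle x\rangle$ has no nontrivial finite cyclic direct factor, hence $\langle x\rangle\cong\prod_{p\in\pi(\langle x\rangle)}\mathbb{Z}_p$ and $x$ is the product of its pairwise commuting primary components $x_p$, with $C_G(x)=\bigcap_{p}C_G(x_p)$. Thus $y$ centralizes $x$ as soon as it centralizes each $x_p$, and, decomposing $y$ the same way, the statement reduces to showing that a nontrivial pro-$p$ element $x$ and a nontrivial pro-$q$ element $y$ with $p\neq q$ commute; here $\langle x\rangle\cong\mathbb{Z}_p$ and $\langle y\rangle\cong\mathbb{Z}_q$.

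Next I would exploit the CN-hypothesis through a carefully chosen centralizer. Choosing $x$ in the centre of a pro-$p$ Sylow subgroup $P$ of $G$, the group $C_G(x)$ is pronilpotent and contains $P$, so it is the direct product $C_G(x)=P\times M$ of its normal pro-$p$ Sylow subgroup $P$ with its Hall pro-$p'$ subgroup $M$; in particular $M$ centralizes $P$. This shows that the pro-$p'$ elements centralizing $x$ all lie in one pronilpotent subgroup commuting with $P$. To bring in the $q$-element $y$, I would analyse the coprime action of $\langle x\rangle$ on the pro-$q$ Sylow subgroups and sections of $G$, using Lemma \ref{cc}(i),(iii) to write each $\langle x\rangle$-invariant section as $[\,\cdot\,,x]\,C_{(\cdot)}(x)$ and to transport fixed points through quotients.

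The heart of the argument, and the step I expect to be the main obstacle, is to rule out $[x,y]\neq 1$. The difficulty is structural: the CN-property is not inherited by quotients and torsion-freeness is not inherited by finite quotients, so one cannot simply project to a finite quotient and invoke the classical structure theory of finite CN-groups. The exceptional non-pronilpotent configurations (Frobenius groups, $S_3$, and similar) must therefore be excluded \emph{intrinsically} inside $G$. The mechanism I would use is that a non-commuting coprime pair forces a fixed-point-free, or splitting, automorphism of prime order on a suitable pro-$p$ (respectively pro-$p'$) section: via the Frobenius reconstruction of Lemma \ref{fro} together with Theorem \ref{higm} and Theorem \ref{khukhro}, the relevant section becomes (locally) nilpotent, and the splitting characterization $x\cdot x^{y}\cdots x^{y^{k-1}}=1$ then yields a \emph{genuine} element of $G$ of finite order. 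As $G$ is torsion-free, this is a contradiction unless the offending section is trivial, which forces $[x,y]=1$.

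Having shown that every pro-$p$ element commutes with every pro-$q$ element for all $p\neq q$, I conclude that any two elements of coprime orders commute, so $G$ is pronilpotent. The reductions and the local analysis of $C_G(x)=P\times M$ are routine consequences of the coprime-action lemmas recalled above; the one genuinely delicate point is the intrinsic exclusion of the exceptional finite configurations, where torsion-freeness is used precisely to convert a would-be Frobenius or splitting action into a nonexistent finite-order element of $G$.
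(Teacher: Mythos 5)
Your reduction to a pro-$p$ element $x$ and a pro-$q$ element $y$ is fine, but the step you yourself flag as ``the heart of the argument'' --- ruling out $[x,y]\neq 1$ --- is not actually carried out, and the mechanism you propose for it faces a fundamental obstruction. In a torsion-free profinite group there are no elements of prime order at all: $\langle y\rangle\cong\mathbb{Z}_q$, so $y$ does not induce an automorphism of prime order on anything inside $G$; to apply Theorem \ref{higm}, Theorem \ref{khukhro} or Lemma \ref{fro} you would have to pass to a section on which some power of $y$ acts trivially, and then the splitting relation $x\cdot x^{y}\cdots x^{y^{k-1}}=1$ produces a finite-order element only in that \emph{section}, not in $G$ itself --- which is no contradiction with torsion-freeness of $G$. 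You never explain how to lift this back, and (as you note) neither the CN-property nor torsion-freeness passes to the relevant quotients, so the plan as stated does not close. A secondary issue: your local analysis starts by ``choosing $x$ in the centre of a Sylow pro-$p$ subgroup,'' but an infinite pro-$p$ group may have trivial centre, and in any case the claim to be proved concerns \emph{arbitrary} coprime pairs, so this special choice does not cover the general case.

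The idea you are missing is the actual way torsion-freeness enters: every nontrivial procyclic subgroup of $G$ is infinite, hence meets every open subgroup nontrivially. The paper picks an open normal $N$ with $G/N$ non-nilpotent, uses Lemma \ref{mmain} to find $H$ with $G=NH$ and $N\cap H$ pronilpotent, takes non-commuting $x,y\in H$ of coprime orders, and sets $X=\langle x\rangle\cap N\neq 1$, $Y=\langle y\rangle\cap N\neq 1$. Then $X$ and $Y$ commute inside the pronilpotent $N\cap H$, so $\langle x,Y\rangle\leq C_G(X)$ forces $x$ to centralize $Y$, and then $\langle x,y\rangle\leq C_G(Y)$ forces $x$ and $y$ to commute --- a contradiction using nothing beyond pronilpotency of centralizers and coprimality. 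None of the fixed-point-free or splitting-automorphism machinery is needed here (it is used elsewhere in the paper, e.g.\ in Lemma \ref{golova}, but not for this theorem).
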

\begin{proof} By way of contradiction suppose that a torsion-free profinite CN-group $G$ is not pronilpotent. Choose a normal open subgroup $N$ of $G$ such that $G/N$ is not nilpotent. By Lemma \ref{mmain} there exists a subgroup $H$ of $G$ such that $G=NH$ and $N\cap H$ is pronilpotent. Evidently, $H$ is not pronilpotent. We can choose two elements $x,y\in H$ which have coprime orders and do not commute. Set $X=\langle x\rangle\cap N$ and $Y=\langle y\rangle\cap N$. Since $G$ is torsion-free, both $X$ and $Y$ are nontrivial. Because $N\cap H$ is pronilpotent, $X$ and $Y$ commute. We see that $\langle x,Y\rangle\leq C_G(X)$ and $\langle X,y\rangle\leq C_G(Y)$. Taking into account that $C_G(X)$ and $C_G(Y)$ are pronilpotent deduce that $x$ and $y$ commute, contrary to the choice of $x$ and $y$. The proof is complete.
\end{proof}

\begin{lemma}\label{eleme} Let $p$ be a prime and $B$ a noncyclic subgroup of order $p^2$ normalizing a pro-$p'$ subgroup $Q$ of a profinite CN-group $G$. Then $[Q,B]=1$.
\end{lemma}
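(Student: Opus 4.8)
The plan is to exploit the hypothesis that $B$ is \emph{abelian} together with the CN property. The action of $B$ on $Q$ is coprime, since $B$ is a $p$-group and $Q$ is pro-$p'$. The heart of the argument is a single observation: for every $b\in B^\#$ one has $C_Q(b)\le C_Q(B)$. Indeed, because $B$ is abelian, $B\le C_G(b)$, and by the CN hypothesis $C_G(b)$ is pronilpotent. Now $B$ consists of $p$-elements while $C_Q(b)\le C_G(b)$ consists of $p'$-elements, so any element of $B$ and any element of $C_Q(b)$ have coprime orders and therefore commute inside the pronilpotent group $C_G(b)$. Hence $B$ centralizes $C_Q(b)$, which is precisely the assertion $C_Q(b)\le C_Q(B)$.

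Granting this, it remains to see that $Q$ is generated by the subgroups $C_Q(b)$, $b\in B^\#$, for then $Q=\langle C_Q(b):b\in B^\#\rangle\le C_Q(B)\le Q$, forcing $Q=C_Q(B)$, that is, $[Q,B]=1$. That $Q=\langle C_Q(b):b\in B^\#\rangle$ is a standard fact about coprime actions of a noncyclic elementary abelian group: the maximal subgroups of $B$ are exactly the $\langle b\rangle$ with $b\in B^\#$, and $Q$ is generated by the centralizers of these. To keep the argument strictly within the tools already assembled, I would instead first reduce to the case in which $Q$ is a pro-$q$ group for a single prime $q\ne p$. By coprime action $B$ normalizes a Sylow pro-$q$ subgroup $Q_q$ of $Q$ for each $q$; since $Q_q$ is pronilpotent, Lemma~\ref{cc}(iv) gives $Q_q=\prod_{b\in B^\#}C_{Q_q}(b)$, and the observation above then yields $Q_q\le C_Q(B)$. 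As this holds for every prime $q$, the closed subgroup $C_Q(B)$ contains a full Sylow pro-$q$ subgroup of $Q$ for every $q$, and a routine order argument (passing to the finite quotients $Q/N$ over open normal $N$) forces $C_Q(B)=Q$, completing the proof.

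The argument is short, and the only point requiring care is the generation/reduction step: the clean identity $Q=\langle C_Q(b):b\in B^\#\rangle$ is not available verbatim for non-pronilpotent $Q$, which is why I pass to $B$-invariant Sylow subgroups in order to apply Lemma~\ref{cc}(iv). The essential input — that a noncyclic abelian $p$-group $B$ cannot act nontrivially on a coprime subgroup inside a CN-group — is exactly what makes the CN condition bite, and I expect this lemma to be the tool that later forbids rank-$2$ elementary abelian $p$-subgroups from acting nontrivially on the pronilpotent radical.
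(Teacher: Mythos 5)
Your proof is correct and follows essentially the same route as the paper: the key step in both is that $\langle B,C_Q(b)\rangle\leq C_G(b)$ is pronilpotent, so coprimality of orders forces $C_Q(b)\leq C_Q(B)$ for every $b\in B^{\#}$, after which Lemma~\ref{cc}(iv) finishes the argument. Your additional reduction to $B$-invariant Sylow pro-$q$ subgroups is a reasonable precaution for applying Lemma~\ref{cc}(iv), whose statement assumes pronilpotency of the group acted on (the paper applies it to $Q$ directly, with $Q$ only assumed pro-$p'$), but it does not change the substance of the proof.
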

\begin{proof} Let $b\in B^{\#}$. Observe that $\langle B,C_Q(b)\rangle\leq C_G(b)$ and therefore $C_Q(b)\leq C_Q(B)$. By {Lemma~\ref{cc}}(iv), $Q=\prod_{b\in B^{\#}}C_Q(b)$ and so $Q=C_Q(B)$, as required.
\end{proof} 
The next lemma provides an important technical tool for dealing with profinite CN-groups.

\begin{lemma}\label{tec} 
Let $\pi$ be a set of primes and $G$ a profinite CN-group in which an infinite abelian pro-$\pi'$ subgroup $A$ normalizes a pro-$\pi$ subgroup $P$. Then $[P,A]=1$.
\end{lemma}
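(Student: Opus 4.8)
The plan is to reduce the claim to a statement about finitely many elements of $A$ acting on $P$, and then to exploit the coprimality of the action together with the CN-hypothesis. First I would argue that it suffices to show $[P,a]=1$ for every single element $a\in A$; indeed, if each $a$ centralizes $P$, then the whole of $A=\langle A\rangle$ centralizes $P$ and we are done. So fix an arbitrary $a\in A$. Since $A$ is a pro-$\pi'$ group and $P$ is a pro-$\pi$ group, the element $a$ acts coprimely on $P$, and I intend to show $C_P(a)=P$.

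The main idea is to use the fact that $A$ is \emph{infinite} abelian together with Lemma~\ref{eleme}, which handles the noncyclic elementary abelian case. The natural strategy is to find, inside $A$, a noncyclic subgroup $B$ of order $q^2$ for some prime $q\in\pi'$ that contains (a power related to) $a$, so that Lemma~\ref{eleme} forces $B$ to centralize $P$ and hence $a$ to centralize $P$. The obstacle here is that an infinite abelian pro-$\pi'$ group need not contain a copy of $(\mathbb{Z}/q)^2$ as a subgroup; for instance $A$ could be procyclic (a single copy of $\mathbb{Z}_q$), in which case no such noncyclic $B$ exists. So the procyclic case must be treated separately, and this is where I expect the real difficulty to lie.

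To handle the general infinite abelian $A$, I would first reduce to the case where $A$ is pro-$q$ for a single prime $q$: writing $A$ as the direct product of its pro-$q$ Sylow subgroups, it is enough to show each such Sylow subgroup centralizes $P$, and at least one of them must be infinite (since $A$ is infinite). For an infinite abelian pro-$q$ group $A$, if $A$ is not procyclic then it maps onto $(\mathbb{Z}/q)^2$, giving the desired noncyclic $B$ of order $q^2$ and letting Lemma~\ref{eleme} finish the argument. If $A$ is procyclic, hence $A\cong\mathbb{Z}_q$, I would instead consider the action on the centralizer structure directly: for the topological generator, or for each element $a\in A$, the centralizer $C_G(a)$ is pronilpotent by the CN-hypothesis. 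The key leverage is that in a procyclic $\mathbb{Z}_q$ there are elements of arbitrarily large order but the infinite descending chain of subgroups, combined with coprime action, should force $C_P(a)$ to stabilize and coincide with $P$.

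The cleanest way to push the procyclic case through is probably as follows. Suppose $[P,a]\neq 1$ for the generator $a$; by coprime action (Lemma~\ref{cc}) and passing to a finite quotient, I may assume $P$ is finite and $a$ acts with $C_P(a)\neq P$. Since $A\cong\mathbb{Z}_q$ has elements $a_n$ of order $q^n$ for every $n$, and since these all act coprimely on the fixed finite $P$, I can choose two elements $a_1,a_2\in A$ of coprime behaviour on suitable sections so that $\langle a_1, C_P(a_2)\rangle$ and $\langle a_2,C_P(a_1)\rangle$ lie in pronilpotent centralizers, yielding commuting relations that contradict $[P,a]\neq 1$. The precise bookkeeping—ensuring the chosen elements generate a noncyclic section or produce the required centralizer containments—is the part I expect to require the most care, and it is exactly where the infinitude of $A$ (as opposed to mere noncyclicity) is used, since a finite cyclic $A$ could genuinely fail the conclusion.
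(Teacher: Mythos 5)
Your overall plan has a fatal gap at the reduction step, plus two smaller ones. The reduction to Sylow subgroups of $A$ rests on the claim that an infinite abelian profinite group must have an infinite Sylow subgroup; this is false (take $A=\prod_{p\in S}\mathbb{Z}/p\mathbb{Z}$ over an infinite set $S$ of primes). In that example every Sylow subgroup of $A$ is finite cyclic of prime order, and --- as you yourself observe at the end --- a finite cyclic group acting coprimely can genuinely fail to centralize $P$, so your strategy has no case that covers this $A$ at all: the decomposition throws away the infinitude of $A$ exactly where it is needed. The paper avoids this by first recording the key observation that $C_P(a)=C_P(A)$ for \emph{every} nontrivial $a\in A$ (if $x\in C_P(a)$ then $\langle x,A\rangle\le C_G(a)$, which is pronilpotent, so the $\pi$-element $x$ commutes with all of the pro-$\pi'$ group $A$); consequently it suffices to find a \emph{single} nontrivial element of $A$ acting trivially on each finite quotient $P/N$. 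This immediately disposes of the unbounded-torsion configuration: if $N$ is a normal subgroup of $G$ open of index $n$ in $P$, a torsion element of $A$ of order at least $n!$ has a nontrivial power acting trivially on $P/N$, so Lemma~\ref{cc}(iii) gives $P=NC_P(A)$, and letting $N$ shrink gives $P=C_P(A)$.

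Two further problems. First, ``$A$ infinite abelian pro-$q$ and not procyclic implies $A$ maps onto $(\mathbb{Z}/q)^2$'' does not produce a \emph{subgroup} of order $q^2$, which is what Lemma~\ref{eleme} requires: $\mathbb{Z}_q\times\mathbb{Z}_q$ is torsion-free and contains no finite subgroups whatsoever. Second, your sketch for the procyclic case cannot work as stated: any two nontrivial elements $a_1,a_2$ of $A\cong\mathbb{Z}_q$ satisfy $C_P(a_1)=C_P(a_2)=C_P(A)$ by the observation above, so there is no ``coprime behaviour on suitable sections'' to extract from a pair of elements; and the natural fix --- take a nontrivial element of the open kernel of $A\to\mathrm{Aut}(P/N)$ and apply coprime action --- needs a version of Lemma~\ref{cc} for infinite procyclic groups of automorphisms, which is not what the paper states. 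The paper instead handles all torsion-free $A$ (after reducing to that case) by a different device: it picks a finite quotient $G/N$ in which the images of $P$ and $A$ fail to commute, writes $G=NH$ with $N\cap H$ pronilpotent (Lemma~\ref{mmain}), and uses torsion-freeness to get a nontrivial $A_0=A_1\cap N$ normal in $H$; then $[P_1,A_0]\le P_1\cap A_0=1$, so $\langle A_1,P_1\rangle\le C_G(A_0)$ is pronilpotent and the images commute after all, a contradiction. You would need to import both of these mechanisms to complete your argument.
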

\begin{proof} 
Without loss of generality we assume that $G=PA$. Note that since $A$ is abelian, $C_P(a)=C_P(A)$ for any nontrivial $a\in A$. Assume that $[P,A]\neq1$.

Let us show that there exists a bound on the order of torsion elements in $A$. Suppose that this is false. Let $N$ be a normal subgroup of $G$, which is contained in $P$ as an open subgroup. Suppose $N$ has index $n$ in $P$. 
Pick a torsion element $a\in A$ whose order is at least $n!$. It is clear that a nontrivial power of $a$, say $a^i$, acts trivially on $P/N$. {Lemma~\ref{cc}}(iii) shows that $P=NC_P(a^i)$. Hence, $P=NC_P(A)$. This happens for every normal subgroup $N$ of $G$ which is contained in $P$ as an open subgroup. Therefore  $P=C_P(A)$ and so $[P,A]=1$. Thus, indeed there exists a bound on the order of torsion elements in $A$.

Suppose now that $A$ contains a noncyclic subgroup $B$ of order $q^2$ for some prime~$q$. By Lemma \ref{eleme} $[P,B]=1$ and so again $P=C_P(A)$, i.e. $[P,A]=1$. Therefore without loss of generality we may assume that $A$ has no noncyclic finite subgroups. Hence, torsion subgroups in $A$ are cyclic of bounded order. It follows that $A$ has an open torsion-free subgroup, say $A'$. Obviously, it is sufficient to show that $[P,A']=1$ and, replacing  $A$ by $A'$ if necessary,  we assume without loss of generality that $A$ is torsion-free.

Since $[P,A]\neq1$, the group $G$ has a finite quotient $G/N$ in which the images of $P$ and $A$ do not commute. {Lemma~\ref{mmain}} shows that $G$ has a subgroup $H$ such that $G=NH$ and $N\cap H$ is pronilpotent. Set $P_1=P\cap H$ and let $A_1$ be a $\pi'$-Hall subgroup of $H$. Of course, $A_1$ is conjugate to a subgroup of $A$. Since the images of $P$ and $A$ in $G/N$ do not commute, we observe that both subgroups $A_1$ and $P_1$ are nontrivial. Further, since $A_1$ is torsion-free, it follows that $N\cap A_1\neq1$. Set $A_0=A_1\cap N$. Since $A_0$ is characteristic subgroup of the pronilpotent group $N\cap H$, it is normal in $H$  and moreover $A_0$ normalizes $P_1$. We conclude that  $A_0$ centralizes $P_1$. Hence, $\langle A_1,P_1\rangle\leq C_G(A_0)$ and therefore $P_1\leq C_G(A_1)$. This implies that the images of $P$ and $A$ in $G/N$ commute, a contradiction.
\end{proof}

\begin{lemma}\label{golova} Let $\pi$ be a set of primes and $G$ a profinite CN-group. Suppose that a pro-$\pi'$ subgroup $A$ normalizes a pro-$\pi$ subgroup $P$ of $G$. Then $A/C_A(P)$ is finite with Sylow subgroups cyclic or (generalized) quaternion.
\end{lemma}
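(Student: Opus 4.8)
The plan is to put $G=PA$ (a closed subgroup of a CN-group is again a CN-group, so no generality is lost), set $C=C_A(P)$ and $\bar A=A/C$, and prove that $\bar A$ is finite with a unique subgroup of order $q$ in each Sylow pro-$q$ subgroup; by the classification of finite $q$-groups with a unique subgroup of order $q$ this yields cyclic or (generalized) quaternion Sylow subgroups. Two structural remarks will be used repeatedly: since $A$ is pro-$\pi'$, the pro-$\pi$ elements of $G$ all lie in $P$, so for $1\neq p\in P$ the pronilpotent group $C_G(p)$ has $C_P(p)$ as its pro-$\pi$ part; and, being the pro-$\pi$ part of a pronilpotent group, this $C_P(p)$ is centralized by every $\pi'$-element of $C_G(p)$.

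First I would record two soft facts about $\bar A$. For $a\in A$ the procyclic group $\overline{\langle a\rangle}$ is abelian pro-$\pi'$ and normalizes $P$, so by Lemma \ref{tec} it centralizes $P$ as soon as it is infinite; hence every element of $\bar A$ has finite order. Next comes an \emph{isolation} property: if $a\in A$ satisfies $a^{q}\in C$ with $a^{q}\neq 1$, then $a\in C$. Indeed $C_G(a^{q})$ is pronilpotent, it contains $P$ (as $a^{q}$ centralizes $P$) and it contains $a$; since $\langle a\rangle$ and $P$ have coprime orders they commute inside this pronilpotent group, so $[P,a]=1$. In particular every element of order $q$ of $\bar A$ is the image of an element of order exactly $q$ of $A$.

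The heart of the argument is to show that a section $V\cong C_{q}\times C_{q}$ cannot act faithfully on $P$ inside $G$; granting this, no Sylow pro-$q$ subgroup $\bar S$ of $\bar A$ can contain two distinct subgroups of order $q$, so $\bar S$ is procyclic or (for $q=2$) pro-$2$ generalized quaternion, and being torsion it is finite cyclic or finite generalized quaternion. So suppose $a,b\in A$ have order $q$ and their images generate $V=\langle a,b\rangle/C_{\langle a,b\rangle}(P)\cong C_q\times C_q$ acting faithfully and coprimely on $P$; using the isolation property I may take, for each $v\in V^{\#}$, a preimage $g_v$ of order $q$, so that $C_P(v)=C_P(g_v)$ and $C_G(g_v)$ is pronilpotent. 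The key consequence of the CN-hypothesis is the inclusion
\[
C_P(p)\subseteq C_P(g_v)\qquad\text{for every } p\in C_P(g_v)^{\#},
\]
which holds because $C_P(p)$ is the pro-$\pi$ part of the pronilpotent group $C_G(p)\ni g_v$ and therefore centralizes $g_v$. Moreover $V$ normalizes each $C_P(g_v)$, since replacing $g_v$ by a conjugate in $\langle a,b\rangle$ changes it by a factor of $C$ and hence does not change its action on $P$. These two facts let me prove that every nontrivial element of $V$ acts fixed-point-freely on $P$: any nontrivial element of the nilpotent group $C_P(V)$ would, through the displayed inclusion, have its whole $P$-centralizer trapped inside $C_P(V)$, and chasing this down to the centralizer of a central element forces $C_P(g_v)=P$, i.e. $g_v\in C$, a contradiction. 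With all of $V^{\#}$ acting fixed-point-freely, Theorem \ref{higm} shows that $P$ is pronilpotent, whereupon Lemma \ref{cc}(iv) gives $P=\prod_{v\in V^{\#}}C_P(v)=1$, contradicting the faithfulness of the action.

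Finally I would deduce finiteness of $\bar A$: only finitely many primes can divide its order, since otherwise the isolation property of the second paragraph allows one to assemble commuting elements of prime order into an infinite abelian subgroup of $A$ outside $C$, against Lemma \ref{tec}; together with the finiteness of each Sylow subgroup this gives $|\bar A|<\infty$, completing the proof. The main obstacle is the third paragraph — ruling out a faithful $C_q\times C_q$ action — and within it the genuinely delicate point is the fixed-point-free conclusion when $P$ has trivial centre; this is exactly where the full strength of the CN-hypothesis (pronilpotency of the centralizers $C_G(p)$, not merely of those inside $P$) enters, and where Theorem \ref{higm} and the coprime decomposition of Lemma \ref{cc}(iv) are brought to bear.
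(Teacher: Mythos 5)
Your preliminary reductions are sound and largely parallel the paper's: torsion of $A/C_A(P)$ via Lemma \ref{tec}, the lifting of elements of order $q$ (your ``isolation property'' is essentially the paper's observation that $C_C(b)=1$ forces each $b\in B\setminus C$ to have order exactly $q$), and the reduction to excluding a faithful $C_q\times C_q$ section. But the heart of your argument --- that every nontrivial element of $V$ acts fixed-point-freely on $P$ --- is not actually proved. The inclusion $C_P(p)\subseteq C_P(g_v)$ for $p\in C_P(g_v)^{\#}$ only says that $C_P(g_v)$ is ``isolated'' in $P$; to force $C_P(g_v)=P$ from it you need $C_P(g_v)$ to contain an element whose $P$-centralizer is all of $P$, i.e.\ a nontrivial element of $Z(P)$. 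Since $P$ is an arbitrary pro-$\pi$ subgroup, $Z(P)$ may be trivial, and for that case you give no argument: you flag it yourself as ``the genuinely delicate point'' and then appeal to the CN-hypothesis, Theorem \ref{higm} and Lemma \ref{cc}(iv), which are the tools you intend to use \emph{after} fixed-point-freeness is in hand, not tools that establish it. There is also a logical slip in the same sentence: you argue about nontrivial elements of $C_P(V)=\bigcap_{v}C_P(g_v)$, whereas fixed-point-freeness requires each individual $C_P(g_v)$ to vanish, a much stronger statement; and the asserted nilpotency of $C_P(V)$ is itself unjustified.

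The paper sidesteps this entirely by moving the contradiction from $P$ into $C=C_A(P)$: Lemma \ref{eleme} forces $C\neq1$; a short CN argument (the one you essentially reproduce) gives $C_C(b)=1$ for every $b\in B\setminus C$, so each such $b$ has order $q$ and acts as a splitting automorphism, whence $C$ is locally nilpotent by Theorem \ref{khukhro}; finally $B/C$ is made to act on a finite section of $C$ of exponent $q$ (via Lemma \ref{rankexpo}), and a finite $q$-group acting on a nontrivial finite $q$-group always has nontrivial fixed points --- contradicting $C_C(b)=1$. You would need an argument of comparable substance for the trivial-centre case before your route closes. A secondary gap: your finiteness argument proposes to ``assemble commuting elements of prime order'' for infinitely many primes, but lifted elements of distinct prime orders need not commute in $A$; the paper instead invokes Herfort's theorem that a profinite torsion group involves only finitely many primes.
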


\begin{proof} Set $C=C_A(P)$. Observe that in view of Lemma \ref{tec} each infinite procyclic subgroup of $A$ belongs to $C$. Therefore $A/C$ is torsion and, by Zelmanov's theorem \cite{zelm}, locally finite. Suppose that $A/C$ contains a noncyclic subgroup $B/C$ of order $q^2$ for some prime $q$. The group $B/C$ acts faithfully on $P$. By {Lemma~\ref{cc}}(iv), $P=\prod_{bC\in (B/C)^{\#}}C_{P}(bC)$. Lemma \ref{eleme} shows that $C\neq1$.

Suppose that for some $b\in B\setminus C$ the centralizer $C_C(b)$ is nontrivial. Choose a nontrivial element $x\in C_C(b)$ and observe that $P\langle b\rangle\leq C_G(x)$. It follows that $P\langle b\rangle$ is pronilpotent whence $[P,b]=1$ and so $b\in C$. This is a contradiction and therefore $C_C(b)=1$ for each $b\in B\setminus C$. It follows that the order of each $b\in B\setminus C$ is precisely $q$.

Now Khukhro's Theorem \ref{khukhro} states that $C$ is locally nilpotent. Choose $b_1,b_2\in B\setminus C$ such that $B=\langle b_1,b_2,C\rangle$. For some nontrivial $x\in C$ set $X=\langle x,b_1,b_2\rangle\cap C$. We observe that $X$ has finite index in $\langle x,b_1,b_2\rangle$ and so $X$ is finitely generated. Hence $X$ is nilpotent. Let $Z=Z(X)$. Observe that $\langle b_1,b_2\rangle$ normalizes $Z$ and the induced group (denote it by $\bar{B}$) of automorphisms of $Z$ is a finite noncyclic group of order $q^2$. Since $C_C(b)=1$ for each $b\in B\setminus C$, it follows that $Z$ has exponent $q$ (Lemma \ref{rankexpo}). Choose a nontrivial element $z\in Z$ and consider the action of $\bar{B}$ on $Y=\langle z^{\bar{B}}\rangle$. Since both groups $\bar{B}$ and $Y$ have finite $q$-power order, the centralizer $C_Y(\bar{B})$ is nontrivial. It follows that also the centralizers $C_C(b_1)$ and $C_C(b_2)$ are nontrivial. This contradicts the assumption that $C_C(b)=1$ for each $b\in B\setminus C$.

Hence, for any prime $q$ subgroups of order $q^2$ of $A/C$ are cyclic. Thus, Sylow subgroups of $A/C$ are cyclic or generalized quaternion (see \cite[Theorem 5.4.10(ii)]{go}). It remains to show that $A/C$ is finite. The finiteness of $A/C$ is immediate from Herfort's theorem that the set $\pi(H)$ is finite for any profinite torsion group $H$ (see \cite{herfort}). The lemma is established.
\end{proof}

Recall that the Fitting height of a finite soluble group $G$ is the length $h(G)$ of a shortest series of normal subgroups all of whose quotients are nilpotent. By the Fitting height of a prosoluble group $G$ we mean the length $h(G)$ of a shortest series of normal subgroups all of whose quotients are pronilpotent. Note that in general a prosoluble group does not necessarily have such a series. The parameter $h(G)$ is finite if, and only if, $G$ is an inverse limit of finite soluble groups of bounded Fitting height.

\begin{lemma}\label{44} 
Let $p$ be a prime and $G$ a finite soluble group in which for every prime $q\neq p$ the Sylow $q$-subgroups are cyclic or generalized quaternion. The Fitting height of $G$ is at most $4$.
\end{lemma}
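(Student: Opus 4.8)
The plan is to induct on $|G|$, exploiting that the hypothesis passes to subgroups and quotients: a Sylow $q$-subgroup of a section of $G$ is a section of a Sylow $q$-subgroup of $G$, and a section of a cyclic or generalized quaternion $q$-group is again cyclic or generalized quaternion. Throughout I would use the subadditivity $h(G)\le h(N)+h(G/N)$ for $N\trianglelefteq G$, the coprime-action machinery of Lemma~\ref{cc} and Lemma~\ref{rankexpo}, and the characterization (Gorenstein \cite[Theorem~5.4.10(ii)]{go}) that a $q$-group is cyclic or generalized quaternion precisely when it has a unique subgroup of order $q$; thus the hypothesis asserts that every Sylow $q$-subgroup with $q\neq p$ has $q$-rank $1$. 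The natural first reduction is to pass to $G/O_{p'}(G)$: since an extension of a $p'$-group by a $p'$-group is a $p'$-group, one gets $O_{p'}(G/O_{p'}(G))=1$, and the nilpotent core $O_{p'}(G)$ accounts for a single Fitting layer. The task then reduces to bounding the Fitting height of the quotient, whose Fitting subgroup becomes a $p$-group, after which one more passage isolates a $p'$-group $W$ as the relevant Fitting subgroup on which the complement acts faithfully.

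The engine of the argument is the automorphism groups of the permitted $p'$-Sylows. The automorphism group of a cyclic group is abelian; $\operatorname{Aut}(Q_{2^n})$ is a $2$-group for $n\ge 4$; and the \emph{only} non-nilpotent exception is $\operatorname{Aut}(Q_8)\cong S_4$. Combined with the fact that $\operatorname{Aut}(C_{q^k})$ has order $q^{k-1}(q-1)$, this sharply limits which primes can act nontrivially on which Fitting layer and forbids long alternations of primes along the upper Fitting series: a $p'$-group acting faithfully on a cyclic $q$-group (with $q\neq p$) does so through an abelian group, so such layers cannot iterate. It is clean to dispose of $p=2$ first: then every $p'$-Sylow is of odd order, hence cyclic, all relevant automorphism groups are abelian, and the height is comfortably within the bound, with no quaternion complications. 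The substantive case is $p$ odd, where a Sylow $2$-subgroup may be generalized quaternion and the $S_4$-exception is live.

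The main obstacle is exactly this exceptional case, where a section behaves like the binary octahedral group $2O=Q_8 . S_3$, which on its own realizes Fitting height $3$ (its Fitting subgroup is $Q_8$, with $2O/Q_8\cong S_3$ of height $2$, and its Sylow $2$-subgroup is $Q_{16}$). A naive layer count threatens an extra, fifth Fitting layer here, so the heart of the proof is to control the \emph{global} Sylow $2$-subgroup: an outer involution acting on a normal $Q_8$, taken together with that $Q_8$, must again generate a cyclic or generalized quaternion $2$-group, since otherwise a second involution appears, contradicting the hypothesis via \cite[Theorem~5.4.10(ii)]{go}. This rules out the semidihedral and split configurations, forces the surviving extension to be $Q_{16}$, and thereby shows that the $S_4$-contribution is absorbed into (rather than added on top of) the existing layers. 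Assembling the pieces—one layer for the nilpotent core $O_{p'}(G)$ and at most three for the remaining quotient, with the quaternion coupling preventing the would-be extra layer—yields $h(G)\le 4$; the groups $2O$ and $O_p(G)\rtimes 2O$ show that the bound is attained.
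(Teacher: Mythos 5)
Your outline has three genuine gaps, and the first two break the structure of the argument. First, $O_{p'}(G)$ is not nilpotent: it is merely the largest normal $p'$-subgroup, and under the hypothesis of the lemma it is a $p'$-group all of whose Sylow subgroups are cyclic or generalized quaternion --- such a group can itself have Fitting height $3$ (your own example $2O$ shows this with $p=5$, say; already $S_3$ gives height $2$). So ``one layer for the nilpotent core $O_{p'}(G)$'' is false, and your final assembly $1+3=4$ collapses: with $h(O_{p'}(G))\le 3$ your decomposition only yields $6$. Second, the hypothesis is not section-closed: a quotient of a generalized quaternion group need not be cyclic or generalized quaternion (e.g.\ $Q_{16}/Z(Q_{16})\cong D_8$), so passing to $G/O_{p'}(G)$ --- or to any quotient whose kernel meets a Sylow $2$-subgroup nontrivially --- can destroy the hypothesis; the reduction that does work is modding out $O_p(G)$, since the Sylow $q$-subgroups for $q\ne p$ then map isomorphically. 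Third, your resolution of the $\operatorname{Aut}(Q_8)\cong S_4$ difficulty does not close. Knowing that the $2$-group generated by a normal $Q_8$ and an outer element must be $Q_{16}$ does not prevent the image of $G$ in $\operatorname{Aut}(Q_8)$ from being all of $S_4$: in $2O$ that image \emph{is} all of $S_4$, realized by elements of order $8$, and there is still a unique involution. What actually prevents the extra Fitting layer is that the inner part $V_4=\operatorname{Inn}(Q_8)$ is already accounted for inside $F(G)$, so that $G/F(G)$ acts only through $\operatorname{Out}(Q_8)\cong S_3$; your Sylow-$2$ observation is a necessary condition but not the mechanism, and you never convert ``absorbed'' into the inequality $h(G/F(G))\le 2$ that the count requires.

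For comparison, the paper's proof is short and sidesteps all of this. Assume first $O_p(G)=1$, so that $M=F(G)$ is a nilpotent $p'$-group with cyclic or generalized quaternion Sylow subgroups. Pass to $\bar G=G/M'$: by \cite[Theorem 6.1.6]{go} one has $F(\bar G)=\bar M$ and $C_{\bar G}(\bar M)=\bar M$, so $G/M$ embeds in $\operatorname{Aut}(\bar M)$. Since the abelianization of a generalized quaternion group is the Klein four-group, $\bar M$ is either cyclic or the direct product of a cyclic group of odd order with a noncyclic group of order $4$, and its automorphism group is metabelian; hence $h(G)\le 1+2=3$ in this case, and $h(G)\le 4$ in general after stripping $O_p(G)$. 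The single idea you are missing is this passage to the abelianized Fitting subgroup (equivalently, to $\operatorname{Out}(F(G))$), which replaces $\operatorname{Aut}(Q_8)\cong S_4$ by $\operatorname{Aut}(C_2\times C_2)\cong S_3$ and makes the entire $S_4$ discussion unnecessary.
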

\begin{proof} Suppose first that $O_p(G)=1$. Let $M=F(G)$ and $M'$ be the commutator subgroup of $M$. Set $\bar{G}=G/M'$ and $\bar{M}=M/M'$. By \cite[Theorem 6.1.6]{go}, $\bar{M}=F(\bar{G})$ and $C_{\bar{G}}(\bar{M})=\bar{M}$. It follows that the quotient $G/M$ embeds in the group of automorphisms of $\bar{M}$. Note that $\bar{M}$ is either cyclic or direct product of a cyclic group of odd order and a noncyclic group of order 4. Recall that the group of automorphisms of a cyclic group is abelian while that of the noncyclic group of order 4 is isomorphic to the non-abelian group of order 6. We conclude that $G/M$ is metabelian. Thus, if $O_p(G)=1$, then $h(G)\leq3$.

Now drop the assumption that $O_p(G)=1$. The above paragraph shows that $h(G/O_p(G))\leq3$. Thus, $h(G)\leq4$.
\end{proof}

\begin{lemma}\label{55} Let $p$ be a prime and $G$ a prosoluble CN-group. Let $P$ be a pro-$p$ subgroup of $G$ and $T=N_G(P)$. The Fitting height of $T$ is at most $5$.
\end{lemma}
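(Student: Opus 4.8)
The plan is to bound the Fitting height of $T$ by peeling off pronilpotent layers, using that $h(T)\le h(N)+h(T/N)$ for a normal subgroup $N$. The first layer is $C:=C_T(P)$, which I would show to be pronilpotent. We may assume $P\neq1$, since for $P=1$ we have $T=N_G(P)=G$; as $P$ is then a nontrivial pro-$p$ group, its centre is nontrivial, so fix $1\neq z\in Z(P)$. Every element of $C$ centralizes $P$ and hence $z$, so $C\le C_G(z)$; since $G$ is a CN-group, $C_G(z)$ is pronilpotent, and therefore so is $C$. Thus $h(C)\le1$ and $h(T)\le1+h(\bar T)$ with $\bar T:=T/C$, and it remains to prove $h(\bar T)\le4$.

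Next I would pin down the arithmetic of $\bar T$. Let $A$ be a pro-$p'$ Hall subgroup of $T$; it normalizes the normal pro-$p$ subgroup $P$, so Lemma~\ref{golova} gives that $A/C_A(P)$ is finite with cyclic or generalized quaternion Sylow subgroups. As $A\cap C=C_A(P)$, the image of $A$ in $\bar T$ is a pro-$p'$ Hall subgroup $\bar A\cong A/C_A(P)$; in particular $\bar A$ is finite and, for every prime $q\neq p$, the Sylow $q$-subgroups of $\bar T$ are cyclic or generalized quaternion. Now set $\bar T_1:=\bar T/O_p(\bar T)$, so that $h(\bar T)\le1+h(\bar T_1)$ and $O_p(\bar T_1)=1$. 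I claim $\bar T_1$ is finite. Indeed, let $\bar M=F(\bar T_1)$; its Sylow pro-$p$ subgroup is characteristic, hence normal in $\bar T_1$, hence contained in $O_p(\bar T_1)=1$, so $\bar M$ is pro-$p'$. Each Sylow $q$-subgroup of $\bar M$ lies in a Sylow $q$-subgroup of $\bar T_1$, which is isomorphic to the corresponding finite, cyclic or generalized quaternion Sylow subgroup of $\bar A$; since moreover $\pi(\bar M)\subseteq\pi(\bar A)$ is finite, $\bar M$ is finite. By the profinite analogue of the self-centralizing property of the Fitting subgroup (as used in \cite[Theorem~6.1.6]{go}), $C_{\bar T_1}(\bar M)\le\bar M$, whence $\bar T_1/\bar M$ embeds into the finite group $\mathrm{Aut}(\bar M)$ and $\bar T_1$ is finite, as claimed.

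Finally, $\bar T_1$ is a finite soluble group with $O_p(\bar T_1)=1$ whose Sylow $q$-subgroups are cyclic or generalized quaternion for every $q\neq p$, so it satisfies the hypothesis of Lemma~\ref{44}. The first part of the proof of Lemma~\ref{44} treats exactly the case $O_p=1$ and yields there the sharper bound $h(\bar T_1)\le3$: the quotient $\bar M/\bar M'$ is cyclic or the direct product of a cyclic group of odd order with a Klein four-group, $\mathrm{Aut}(\bar M/\bar M')$ is metabelian, and $\bar T_1/\bar M$ embeds into it. Hence $h(\bar T_1)\le3$, so $h(\bar T)\le1+3=4$ and $h(T)\le1+4=5$.

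The main obstacle is the bookkeeping that makes the three layers add up to exactly $5$ rather than $6$: the global statement of Lemma~\ref{44} only gives $h\le4$, and one genuinely needs the sharper bound $h\le3$ that is available when $O_p=1$. For this it is essential to know that $\bar T_1$ is finite, which is where the self-centralizing Fitting property and the finiteness part of Lemma~\ref{golova} do the work. Without this finiteness one would be tempted to pass to the finite continuous quotients of $\bar T$ and apply Lemma~\ref{44} there, but a generalized quaternion Sylow $2$-subgroup can map onto a dihedral group in a quotient, and then Lemma~\ref{44} no longer applies verbatim. Reducing instead to the single finite group $\bar T_1$ avoids this difficulty entirely.
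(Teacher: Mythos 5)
Your three-layer decomposition is sound in outline, but the step you use to prove that $\bar T_1$ is finite does not stand as written. You invoke ``the profinite analogue of the self-centralizing property of the Fitting subgroup'' for the a priori infinite prosoluble group $\bar T_1$. No such analogue is available in general: an infinite prosoluble group can have trivial maximal normal pronilpotent subgroup (suitable infinitely iterated wreath products give examples), in which case $C_{\bar T_1}(F(\bar T_1))\le F(\bar T_1)$ would force $\bar T_1=1$. Since you are using this property precisely in order to establish finiteness, you cannot first reduce to the finite case where the property is known. Fortunately the finiteness of $\bar T_1$ follows from what you have already proved by a much softer argument: the Hall $p'$-subgroup of $\bar T_1$ is finite, so a Sylow pro-$p$ subgroup $S$ of $\bar T_1$ is open; its normal core is a finite intersection of conjugates of $S$, hence an open normal pro-$p$ subgroup of $\bar T_1$, hence contained in $O_p(\bar T_1)=1$; therefore $\bar T_1$ is finite. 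With that substitution your argument is complete. (You and the paper both tacitly assume $P\ne1$; your ``we may assume'' does not actually dispose of the case $P=1$, where the claim is Lemma \ref{fh5}, proved later from this lemma, but in the only application the relevant $P$ is nontrivial, so this is harmless.)

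As for the route itself, it genuinely differs from the paper's. The paper uses only two layers: $C_T(P)$ is a pronilpotent normal subgroup of $T$, hence lies in $F(T)$, and then every finite continuous image of $T/F(T)$ is claimed to satisfy the hypothesis of Lemma \ref{44}, so that $T/F(T)$ is an inverse limit of finite soluble groups of Fitting height at most $4$ and $h(T)\le1+4$. This avoids any need to prove finiteness of a residual quotient and never uses the sharper bound $3$. Your count $1+1+3$ buys something real: the dihedral phenomenon you point out (a generalized quaternion Sylow $2$-subgroup mapping onto a dihedral group) is in fact an imprecision in the paper's own proof, since finite continuous images of $T/F(T)$ need not literally satisfy the stated hypothesis of Lemma \ref{44}. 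It is harmless there only because the proof of Lemma \ref{44} uses nothing beyond the fact that $F(G)/F(G)'$ is cyclic or cyclic-times-Klein-four, which holds for dihedral Sylow $2$-subgroups as well; your reduction to a single finite group whose Sylow subgroups are genuine subgroups, not quotients, of cyclic or generalized quaternion groups sidesteps the issue cleanly.
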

\begin{proof} Since the centralizers in $G$ are pronilpotent, it follows that $C_T(P)$ is contained in the Fitting subgroup $F(T)$. Lemma \ref{golova} states that $A/C_A(P)$ is finite with Sylow subgroups cyclic or generalized quaternion for each pro-$p'$ subgroup $A\leq T$. Thus, every finite continuous image of $T/F(T)$ satisfies the hypothesis of Lemma \ref{44}. Hence $T/F(T)$ is an inverse limit of finite soluble groups of Fitting height at most 4. It follows that $h(T/F(T))\leq4$ and so $h(T)\leq5$.
\end{proof}

Recall that a Sylow basis in a group $G$ is a family of pairwise permutable Sylow $p_i$-subgroups $P_i$ of $G$, exactly one for each prime. The \emph{basis normalizer} of such Sylow basis in $G$ is $\bigcap_i N_G(P_i)$. This subgroup is also known under the name of system normalizer. If $G$ is a profinite group and $T$ is a basis normalizer in $G$, then $T$ is pronilpotent and $G=\gamma_\infty(G)T$, where $\gamma_\infty(G)$ denotes the intersection of the terms of the lower central series of $G$. Furthermore, every prosoluble group $G$ possesses a Sylow basis and any two basis normalizers in $G$ are conjugate (see \cite[Prop.~2.3.9]{rz} and \cite[9.2]{rob}). 

\begin{lemma}\label{fh5} Let $G$ be a prosoluble CN-group. The Fitting height of $G$ is at most 5.
\end{lemma}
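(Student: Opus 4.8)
The plan is to reduce the statement to Lemma~\ref{55} by bounding the Fitting height of every finite continuous quotient of $G$ by $5$. This reduction is legitimate: for a finite soluble group the Fitting height equals the length of its lower nilpotent series $L_0=G\ge L_1\ge L_2\ge\dots$, where $L_{k+1}=\gamma_\infty(L_k)$, and the nilpotent residual is preserved by surjections, so that $L_k(G)N/N=L_k(G/N)$ for every open normal subgroup $N$. Hence, once I know $h(G/N)\le5$ for all such $N$, I obtain $L_5(G)\le\bigcap_N N=1$, and the series $G=L_0\ge\dots\ge L_5=1$ witnesses $h(G)\le5$. So it suffices to fix an open normal $N$ with $G/N\ne1$ and prove $h(G/N)\le5$.

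The tempting shortcut of applying Lemma~\ref{55} to $P=O_p(G)$, which would force $N_G(P)=G$, is one I would avoid: for a prosoluble group $F(G)$ may well be trivial, and I do not want to establish $F(G)\ne1$ separately. Instead I would exploit that Lemma~\ref{55} is available for an \emph{arbitrary} pro-$p$ subgroup. Since $G/N$ is a nontrivial finite soluble group, it has a nontrivial normal $p$-subgroup for some prime $p$; let $K\supseteq N$ be the normal subgroup of $G$ with $K/N=O_p(G/N)$. Then $K/N$ is a nontrivial finite $p$-group, so any Sylow pro-$p$ subgroup $P$ of $K$ covers $K/N$; hence $K=NP$ and $P\ne1$.

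Now the profinite Frattini argument, applied to the normal subgroup $K$ and its Sylow subgroup $P$, yields $G=K\,N_G(P)$, and since $K=NP$ with $P\le N_G(P)$ this simplifies to $G=N\,N_G(P)$. Consequently $G/N\cong N_G(P)/(N_G(P)\cap N)$ is a continuous quotient of $N_G(P)$. By Lemma~\ref{55}, $h(N_G(P))\le5$, and since Fitting height does not increase under surjections, $h(G/N)\le5$, as required.

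I expect the crux to be the Frattini reduction of the previous paragraph: its whole force comes from the fact that $P$ is merely a Sylow subgroup of the normal subgroup $K$, hence in general a \emph{nonnormal} pro-$p$ subgroup of $G$, so that the uniform bound of Lemma~\ref{55} can be brought to bear even though $N_G(P)$ is a proper subgroup. The remaining points, namely the functoriality of the nilpotent residual, the profinite versions of the Frattini and Sylow theorems, and the monotonicity of Fitting height under quotients, are routine given the inverse-limit conventions adopted in the paper.
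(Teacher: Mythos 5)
Your proof is correct, and it takes a genuinely different route from the paper's. Both arguments ultimately funnel into Lemma~\ref{55}, but they produce the relevant normalizer differently. The paper works internally with system normalizers: it iterates $K_{i+1}=\gamma_\infty(K_i)$, chooses a Sylow basis $\{P_1,P_2,\dots\}$ of $G$ together with the induced bases of each $K_i$, and forms the product $T=T_6T_5\cdots T_1$ of the corresponding basis normalizers; then $G=K_6T$, the subgroup $T$ normalizes the pro-$p$ subgroup $P_i\cap K_6$, and Lemma~\ref{55} forces $h(T)\le5$, contradicting the fact that $T$ surjects onto $G/K_6$ of Fitting height $6$. You instead reduce to finite continuous quotients via the functoriality of the lower nilpotent series (a routine compactness argument, and consistent with the paper's own remark that $h(G)$ is finite iff $G$ is an inverse limit of finite soluble groups of bounded Fitting height), and then, for a nontrivial finite quotient $G/N$, apply the Frattini argument to a Sylow pro-$p$ subgroup $P\neq1$ of the preimage of $O_p(G/N)$ to get $G=N\,N_G(P)$, so that $G/N$ is a quotient of $N_G(P)$ and Lemma~\ref{55} applies directly. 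Your version is more elementary in that it needs only Sylow conjugacy and the Frattini argument rather than the existence and compatibility of basis normalizers along the chain $K_1\ge K_2\ge\cdots$; the paper's version avoids the passage to finite quotients and stays entirely inside $G$, at the cost of the Sylow-basis machinery it sets up just before the lemma. You were also right to insist on $P\neq1$: Lemma~\ref{55} genuinely needs this (its proof uses $C_T(P)\le F(T)$), and your choice of $P$ as a Sylow subgroup of the preimage of a nontrivial $O_p(G/N)$ guarantees it.
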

\begin{proof} Set $K_1=\gamma_\infty(G)$ and $K_{i+1}=\gamma_\infty(K_i)$ for $i=1,2,\dots$. Assume that the lemma is false and $K_5\neq1$.
Let $\{P_1,P_2,\dots\}$ be a Sylow basis of $G$, and let $T_1$ be the basis normalizer corresponding to $\{P_1,P_2,\dots\}$. We have $G=K_1T_1$. 

For each $i=1,2,\dots$ consider the Sylow basis $\{P_1\cap K_i,P_2\cap K_i, \dots\}$ in $K_i$ and let $T_i\leq K_i$ be the corresponding basis normalizer. The specific choice of the Sylow bases $\{P_1\cap K_i,P_2\cap K_i,\dots\}$ guarantees that $T_j$ normalizes $T_k$ whenever $j\leq k$. Note that we have the equalities $$G=K_1T_1=K_2T_2T_1=\dots=K_iT_iT_{i-1}\cdots T_2T_1=\dots.$$ In particular, $G=K_6T$ , where $T=T_6T_5T_4T_3T_2T_1$. Since $K_6\neq K_5$, it follows that $G/K_6$ has Fitting height 6 and therefore $h(T)=6$. This contradicts Lemma \ref{55} since $T$ normalizes a pro-$p$ subgroup of $K_6$.
\end{proof}

Denote by $\mathfrak C$ the class of all finite groups whose soluble subgroups are of Fitting height at most 5. Obviously, $\mathfrak C$ is closed under taking subgroups. It was shown in \cite{ppz} that the class $\mathfrak C$ is also closed under taking quotients. Therefore we have the following lemma (cf. Lemma 3.8 in \cite{ppz}).

\begin{lemma} Any profinite CN-group is pro-$\mathfrak C$.
\end{lemma}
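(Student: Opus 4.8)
The plan is to reduce the statement to a question about finite continuous quotients and then lift each relevant soluble section to a prosoluble subgroup of $G$, where Lemma~\ref{fh5} applies. First I would record that $\mathfrak C$ is closed not only under subgroups and quotients (as already noted) but also under finite direct products: a soluble subgroup $T$ of $A\times B$ embeds into $T_A\times T_B$, where $T_A,T_B$ are its projections, and $h(T)\le\max\{h(T_A),h(T_B)\}\le 5$ whenever $A,B\in\mathfrak C$. With these closure properties, being pro-$\mathfrak C$ is equivalent to the condition that every finite continuous quotient $G/N$ lies in $\mathfrak C$, i.e.\ that every soluble subgroup of $G/N$ has Fitting height at most $5$. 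So I fix an open normal subgroup $N$ of $G$ and a soluble subgroup $\bar S\le G/N$, and aim to bound the Fitting height of $\bar S$ by $5$.

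Next I would produce a prosoluble subgroup of $G$ mapping onto $\bar S$. Let $S$ be the full preimage of $\bar S$ in $G$; then $S$ is a closed (indeed open) subgroup containing $N$, and $N$ is a normal open subgroup of $S$ with $S/N\cong\bar S$. Applying Lemma~\ref{mmain} to the pair $(S,N)$ yields a subgroup $H\le S$ with $S=NH$ and $N\cap H$ pronilpotent. By the isomorphism theorems $H/(H\cap N)\cong HN/N=S/N=\bar S$, so $\bar S$ is a continuous quotient of $H$. Moreover $H$ is an extension of the pronilpotent (hence prosoluble) group $N\cap H$ by the finite soluble group $\bar S$, and is therefore prosoluble.

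Finally I would invoke the CN-hypothesis and Lemma~\ref{fh5}. Being a closed subgroup of the CN-group $G$, the group $H$ is itself a CN-group, since for $x\in H$ one has $C_H(x)=C_G(x)\cap H$, a closed subgroup of the pronilpotent group $C_G(x)$ and hence pronilpotent. Thus $H$ is a prosoluble CN-group, and Lemma~\ref{fh5} gives $h(H)\le 5$. Since Fitting height does not increase under passage to continuous quotients and $\bar S$ is a quotient of $H$, we obtain $h(\bar S)\le h(H)\le 5$. As $\bar S$ was an arbitrary soluble subgroup of an arbitrary finite quotient $G/N$, every such quotient lies in $\mathfrak C$, whence $G$ is pro-$\mathfrak C$. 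The only nonroutine step is the lifting of the soluble section $\bar S$ to a genuinely prosoluble subgroup $H$ of $G$, and this is exactly what Lemma~\ref{mmain} delivers; the remaining closure and inheritance arguments are standard.
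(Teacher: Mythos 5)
Your proof is correct and is essentially the argument the paper intends but leaves implicit (deferring to Lemma 3.8 of \cite{ppz}): it combines exactly the ingredients the paper has assembled, namely Lemma \ref{mmain} to lift a soluble subgroup of a finite continuous quotient to a prosoluble (hence CN) subgroup $H$ with $N\cap H$ pronilpotent, and Lemma \ref{fh5} to bound $h(H)$ by $5$. The reduction to finite quotients and the closure and inheritance steps are routine and correctly handled.
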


Recall that the nonsoluble length $\lambda(G)$ of a finite group $G$ is defined as the minimum number of nonsoluble factors in a normal series each of whose factors is either soluble or a nonempty direct product of non-abelian simple groups. It was shown in \cite[Cor.~1.2]{KhSh:15} that the nonsoluble length of a finite group $G$ does not exceed the maximum Fitting height of soluble subgroups of $G$. It follows that for any group $G$ in $\mathfrak C$ we have $\lambda(G)\leq5$. We conclude that each group $G$ in $\mathfrak C$ has a characteristic series of length at most 35 each of whose factors is either nilpotent or a direct product of non-abelian simple groups. More precisely, each group $G$ in $\mathfrak C$ has a characteristic series
\[1=G_0\leq G_1\leq\dots\leq G_{35}=G\]
such that the factors $G_6/G_5$, $G_{12}/G_{11}$, $G_{18}/G_{17}$, $G_{24}/G_{23}$, $G_{30}/G_{29}$ are direct product of non-abelian simple groups while the other factors are nilpotent. Important results of Wilson \cite[Lemma~2 and Lemma~3]{Wilson:83} now guarantee that any pro-$\mathfrak C$ group has a characteristic series of length at most 35 each of whose factors is either pronilpotent or a Cartesian product of non-abelian simple groups. 
Thus, we have proved the following result.

\begin{lemma}\label{series} Any profinite CN-group has a characteristic series of length at most 35 each of whose factors is either pronilpotent 
or a Cartesian product of non-abelian finite simple groups.
\end{lemma}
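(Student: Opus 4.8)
The plan is to reduce the assertion for profinite CN-groups to a structural statement about finite groups in the class $\mathfrak C$, and then lift the result back to the profinite setting by means of Wilson's inverse-limit theorems. By the preceding lemma, every profinite CN-group is pro-$\mathfrak C$, so it suffices to produce, uniformly across all finite continuous quotients lying in $\mathfrak C$, a characteristic series of bounded length whose factors are either nilpotent or direct products of non-abelian simple groups. The content of Lemma \ref{series} is then precisely the pro-$\mathfrak C$ incarnation of such a uniform finite statement.

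First I would fix a finite group $G\in\mathfrak C$ and bound its nonsoluble length. Since by definition every soluble subgroup of $G$ has Fitting height at most $5$, the bound $\lambda(G)\leq 5$ follows at once from \cite[Cor.~1.2]{KhSh:15}, which asserts that $\lambda(G)$ never exceeds the maximum Fitting height of the soluble subgroups of $G$. This supplies a normal series of $G$ in which at most five factors are nonempty direct products of non-abelian simple groups and every remaining factor is soluble.

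Next I would convert this into an explicit characteristic series of length $35$. Passing to a characteristic refinement, each of the five nonsoluble factors occupies a single slot, while the six intervening soluble blocks (before, between, and after the nonsoluble factors) can each be broken into at most five nilpotent layers, because the soluble sections arising here again have Fitting height at most $5$. Placing the five nonsoluble factors at positions $6,12,18,24,30$ produces a series $1=G_0\leq\cdots\leq G_{35}=G$ with $6\times 5+5=35$ terms, exactly as recorded in the paragraph preceding the lemma. The point to verify carefully is that this refinement can be carried out canonically, so that the series is genuinely characteristic and its length is the \emph{same} bound $35$ for every member of $\mathfrak C$.

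Finally, I would invoke Wilson's results \cite[Lemma~2 and Lemma~3]{Wilson:83}. Because a characteristic series of length $35$ with factors of the prescribed type exists uniformly for every finite group in $\mathfrak C$, these lemmas guarantee a corresponding characteristic series of the same length $35$ for any pro-$\mathfrak C$ group, each of whose factors is either pronilpotent or a Cartesian product of non-abelian finite simple groups. Since profinite CN-groups are pro-$\mathfrak C$, the lemma follows. The only genuinely delicate step is securing the uniformity and canonicity needed to feed the finite data into Wilson's inverse-limit machinery; the underlying quantitative input, namely the Fitting-height bound of $5$, has already been established in Lemma \ref{fh5}.
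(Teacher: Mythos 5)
Your proposal is correct and follows essentially the same route as the paper: reduce to finite groups in $\mathfrak C$, bound the nonsoluble length by $5$ via \cite[Cor.~1.2]{KhSh:15}, refine to a characteristic series of length $35$ with the nonsoluble factors in fixed positions, and lift to the profinite setting with Wilson's lemmas \cite[Lemma~2 and Lemma~3]{Wilson:83}. The uniformity/canonicity point you flag is indeed the only subtlety, and the paper handles it exactly as you suggest, by fixing the positions $6,12,18,24,30$ for the nonsoluble factors so that the series is characteristic with the same length for every member of $\mathfrak C$.
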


We remark that our final results show that the characteristic series in the above lemma can be chosen of length at most 3 with the first term being an open pronilpotent subgroup. 

\begin{lemma}\label{metanilp} Let $G$ be a prosoluble CN-group. Then $G$ is virtually pronilpotent.
\end{lemma}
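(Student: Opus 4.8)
The plan is to show that the maximal normal pronilpotent subgroup $F=F(G)$ is open, which is equivalent to $G/F$ being finite and yields the statement at once. Since $G$ is prosoluble, one has the standard inclusion $C_G(F)\le F$ (inherited through the inverse limit from the fact that $C_{\bar G}(F(\bar G))\le F(\bar G)$ in every finite soluble quotient $\bar G$). Consequently $C_G(F)=Z(F)$ and $G/F$ is the group of outer automorphisms that $G$ induces on $F$. Writing $F=\prod_p O_p(G)$ as the Cartesian product of its Sylow subgroups, I would prove that $G/F$ is finite by establishing three properties of it: it is a torsion group, its Sylow subgroups are cyclic or (generalized) quaternion, and $\pi(G/F)$ is finite. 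These three together force finiteness: a torsion profinite group has finite $\pi$ by Herfort's theorem \cite{herfort}, while a torsion pro-$q$ group all of whose finite quotients are cyclic or quaternion is itself finite (an infinite such group would contain a copy of $\mathbb{Z}_q$ and hence fail to be torsion); finally, a prosoluble group with finitely many primes and finite Sylow subgroups has finite order.

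For the torsion property I would argue that no $g\in G$ induces an automorphism of infinite order on $F$. If it did, some pro-$q$ component $\langle g_q\rangle\cong\mathbb{Z}_q$ of the procyclic group $\langle g\rangle$ would have infinite image in $G/F$; by Lemma \ref{tec} this infinite procyclic pro-$q$ group centralizes the pro-$q'$ part $F_{q'}=\prod_{p\ne q}O_p(G)$, so its whole action on $F$ is concentrated on $O_q(G)$. The cyclic-or-quaternion property of the $q$-Sylow of $G/F$ I would obtain from Lemma \ref{golova}: a pro-$q'$ Hall subgroup acting on $O_p(G)$ has finite image with cyclic or generalized quaternion Sylow subgroups, while Lemma \ref{eleme} rules out a noncyclic group of order $q^2$ acting nontrivially on the coprime part $F_{q'}$; together these control the action on every $O_p(G)$ with $p\ne q$. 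Herfort's theorem will then bound $\pi(G/F)$ once torsionness is in hand, and the synthesis above finishes the argument.

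The main obstacle, present in all three points, is the \emph{non-coprime} contribution: the action of the pro-$q$ part of $G$ on $O_q(G)$, to which none of the coprime lemmas (\ref{cc}, \ref{eleme}, \ref{golova}, \ref{tec}) applies directly. Here I would exploit the maximality of $O_q(G)$. An element $g_q$ whose image in $G/F$ has infinite order and which, by the previous paragraph, already centralizes $F_{q'}$, generates together with $O_q(G)$ a pro-$q$ subgroup; applying the CN hypothesis to the centralizers of nontrivial elements of $F_{q'}$, and using the normality forced by the maximality of $O_q(G)$, one should conclude that such a $g_q$ must already lie in $O_q(G)$, a contradiction. Reducing the general situation to the case in which $F$ is a pro-$q$ group, and invoking the bounded Fitting height supplied by Lemma \ref{fh5} to run an induction that peels off one prime at a time, should dispose of this non-coprime part and complete the proof that $G/F$ is finite, hence that $G$ is virtually pronilpotent.
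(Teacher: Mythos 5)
Your plan correctly isolates the obstruction, but it does not overcome it. The coprime half is fine, and even your non-coprime case with $F_{q'}\neq 1$ can be closed cheaply: $C_G(F_{q'})$ is a normal subgroup contained in the centralizer of any nontrivial element of $F_{q'}$, hence pronilpotent and therefore contained in $F$, so a pro-$q$ element centralizing $F_{q'}$ already lies in $O_q(G)$. The genuine gap is the case you yourself flag as the main obstacle, namely the action of the pro-$q$ part of $G$ on $O_q(G)$ when $F$ is pro-$q$: here none of Lemmas \ref{cc}, \ref{eleme}, \ref{golova}, \ref{tec} applies, and your closing paragraph (``one should conclude\dots'', ``should dispose of\dots'') is a statement of intent rather than an argument. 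Concretely, two of the three properties your synthesis needs are unproved exactly at the prime dividing $|F|$: that the $q$-elements of $G/F$ have finite order, and that the Sylow $q$-subgroup of $G/F$ is cyclic or generalized quaternion --- the latter is in this case only a consequence of the final structure theorem (Theorem \ref{mainso}) and is not available at this stage; note that the paper's own remark at the start of Section 4 explicitly excludes the prime $p$ with $F$ pro-$p$ from the cyclic-or-quaternion statement. ``Bounded Fitting height plus peeling off one prime at a time'' does not by itself yield either claim.

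The paper's proof is structured precisely so that this non-coprime situation never has to be confronted head-on. It inducts on the Fitting height (at most $5$ by Lemma \ref{fh5}) via $R=\gamma_\infty(G)$: by induction $R$ contains an open characteristic pronilpotent subgroup $N$, and one passes to the open subgroup $H=\{x\in G\ \vert\ [R,x]\leq N\}$, for which $H/N$ is pronilpotent. If $N$ is pro-$p$, the preimage in $H$ of the normal Sylow $p$-subgroup of the pronilpotent group $H/N$ is a normal pro-$p$ subgroup of $H$, so $H/O_p(H)$ is pro-$p'$ and the entire $p$-on-$p$ action is absorbed into $O_p(H)$; Lemma \ref{golova}, applied to a Hall $p'$-subgroup acting on a Sylow subgroup of $N$, then shows $H/F(H)$ is finite. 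If $\pi(N)$ contains two primes, two applications of Lemma \ref{golova} make both the $p'$- and the $q'$-Hall subgroups of $H/F(H)$ finite, hence $H/F(H)$ is finite. If you want to rescue your direct approach instead, the missing observation in the pro-$q$ case is that $O_q(G/F)=1$ by maximality of $O_q(G)$, so $F(G/F)$ is a $q'$-group on which the coprime machinery does act; but that step, and the descent from it to the finiteness of $G/F$, is not in your write-up.
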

\begin{proof} By Lemma \ref{fh5} $h(G)\leq5$. Let us use induction on $h(G)$. Set $R=\gamma_\infty(G)$. By induction, $R$ is virtually pronilpotent. Therefore $R$ contains an open characteristic pronilpotent subgroup $N$. Let $H={\{x\in G\ \vert\ [R,x]\leq N\}}$. Since $R/N$ is finite, it is clear that $H$ is an open subgroup in $G$. Note that the image of $R\cap H$ in $H/N$ is central. Therefore $H/N$ is pronilpotent. It is sufficient to show that $H$ is virtually pronilpotent. 

Since $H$ is CN, we have $C_H(P)\leq F(H)$ for any Sylow subgroup $P$ of $N$. Lemma \ref{golova} shows that the Hall $p'$-subgroup of $H/F(H)$ is finite for each $p\in\pi(N)$. If $N$ is pro-$p$, set $P=O_p(H)$ and observe that $H/P$ is pro-$p'$ and so $H/P$ is finite. If $N$ is not pro-$p$, choose two primes $p,q\in\pi(N)$ and let $P,Q$ be $p$-Sylow and $q$-Sylow subgroups of $N$. Since both $p'$-Hall and $q'$-Hall subgroups of $H/F(H)$ are finite, the quotient $H/F(H)$ is finite, too.
\end{proof} 

\begin{theorem}\label{main} A profinite CN-group is virtually pronilpotent.
\end{theorem}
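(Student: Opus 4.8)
The plan is to prove Theorem~\ref{main} by reducing the general case to the prosoluble case already handled in Lemma~\ref{metanilp}, using the structural decomposition provided by Lemma~\ref{series}. Recall that Lemma~\ref{series} gives any profinite CN-group $G$ a characteristic series of bounded length whose factors are either pronilpotent or Cartesian products of non-abelian finite simple groups. The key observation I would exploit is that the ``simple'' factors in such a series are severely constrained in a CN-group: a Cartesian product of infinitely many non-abelian simple groups, or even a single large family of them, would force the existence of non-commuting coprime elements inside a single centralizer, violating the CN-condition. So the first step is to show that the nonabelian composition data of $G$ is essentially finite.

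First I would analyze a factor $S=S_1\times S_2\times\cdots$ of the series that is a Cartesian product of non-abelian finite simple groups. Picking elements of coprime orders inside distinct simple factors, I would argue that the centralizer of a suitable element meets infinitely many of the $S_i$ nontrivially and fails to be pronilpotent unless the product is finite. Indeed, if $x\in S_1$ is an involution, then $x$ centralizes all $S_i$ with $i\ge 2$, so $\prod_{i\ge 2}S_i\le C_G(x)$; since each non-abelian simple group contains elements of coprime orders that do not commute, this centralizer cannot be pronilpotent once the product is infinite. Hence each such factor is \emph{finite}, and since the series in Lemma~\ref{series} has bounded length, the non-pronilpotent composition data of $G$ lives inside a closed \emph{normal} subgroup of finite composition length, i.e.\ $G$ has an open normal subgroup $G_0$ whose own simple sections have been stripped away.

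The second step is to pass to the quotient picture. Writing $R$ for the intersection of the pronilpotent terms lying above the finitely many simple factors, I would show that the ``nonsoluble part'' of $G$ is captured by an open normal prosoluble subgroup together with finitely much simple data on top. Concretely, let $N$ be the largest normal subgroup of $G$ all of whose characteristic factors (from the series of Lemma~\ref{series}) are pronilpotent; by the previous paragraph the quotient $G/N$ is finite, so $N$ is open in $G$. The subgroup $N$ is prosoluble, being built from pronilpotent factors, and it is again a CN-group as a (closed) subgroup of the CN-group $G$. Therefore Lemma~\ref{metanilp} applies to $N$ and yields an open pronilpotent subgroup $M$ of $N$.

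The final step is to assemble these pieces. Since $M$ is open in $N$ and $N$ is open in $G$, the subgroup $M$ is open in $G$, and it is pronilpotent; replacing $M$ by the core $\bigcap_{g}M^g$ (a finite intersection because $M$ is open) if a normal subgroup is desired, we conclude that $G$ contains an open pronilpotent subgroup, which is exactly the assertion that $G$ is virtually pronilpotent. The step I expect to be the main obstacle is the first one: carefully ruling out infinite Cartesian products of non-abelian simple factors purely from the CN-hypothesis, because one must choose the test elements so that their centralizers provably fail to be pronilpotent. The cleanest route is to fix one involution in one simple factor and observe that its centralizer contains an entire infinite subproduct in which commuting of coprime elements fails; this is where the CN-condition does the decisive work, forcing finiteness of the nonabelian part and thereby reducing everything to the prosoluble Lemma~\ref{metanilp}.
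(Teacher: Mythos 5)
Your overall strategy --- invoke Lemma~\ref{series}, argue that the simple--product factors are finite, and reduce to the prosoluble case of Lemma~\ref{metanilp} --- is the same as the paper's. But the execution has a genuine gap at exactly the point you flag as ``where the CN-condition does the decisive work.'' The factors $G_{i+1}/G_i$ of the series in Lemma~\ref{series} are \emph{quotients}, not subgroups of $G$. Your involution $x$ lives in the quotient $G_{i+1}/G_i$, and the inclusion $\prod_{j\ge 2}S_j\le C(x)$ is a computation inside that quotient; the CN-hypothesis says nothing about centralizers in quotients (and indeed quotients of CN-groups need not be CN --- the paper's own Theorem~\ref{222} allows $G/O_2(G)$ to be almost simple). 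To make the argument run in $G$ you would have to lift $x$ and control how $C_G(x)$ maps onto $C_{G_{i+1}/G_i}(\bar x)$, which fails without a coprimeness hypothesis. This is precisely the obstacle the paper's proof is built to circumvent: it chooses actual elements $a_i\in S_i\setminus R$ of $G$ (where $S_i$ is the \emph{preimage} of a simple factor of the top quotient $G/R$), forms $A=\langle a_i\rangle$, and applies Lemma~\ref{golova} to the action of a Hall $p'$-subgroup of $A$ on a Sylow subgroup of the pronilpotent open normal subgroup $N\le R$, using $C_G(P)\le F(G)$. That is the mechanism that forces the index set to be finite, and your proposal does not supply a substitute for it.

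There is a second, structural gap in your reduction. You define $N$ as ``the largest normal subgroup of $G$ all of whose characteristic factors are pronilpotent'' and assert $G/N$ is finite. But the series of Lemma~\ref{series} interleaves pronilpotent and simple--product factors (e.g.\ $G_6/G_5$ simple--product, $G_7/G_6$ pronilpotent and possibly infinite), so even granting that every simple--product factor is finite, the part of the series lying \emph{above} the lowest simple factor can still contain infinite pronilpotent sections, and your $N$ (which can only absorb the initial pronilpotent segment) need not have finite index. The paper avoids this by inducting on the length of the series: $R=G_{l-1}$ is virtually pronilpotent by induction, one normalizes so that $R/N$ is central in $G/N$, and then either $G/R$ is pronilpotent (so $G$ is prosoluble and Lemma~\ref{metanilp} finishes) or $G/R$ is a product of simple groups and the Lemma~\ref{golova} argument shows it is finite. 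Some inductive or iterative device of this kind is needed; the single collapse to one normal subgroup $N$ with finite quotient does not work as stated. Your final assembly step (an open pronilpotent subgroup of an open subgroup is open, and one may pass to the normal core) is fine.
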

\begin{proof} Let $G$ be a profinite CN-group. By {Lemma~\ref{series}}, $G$ has a normal series of length at most 35
\[1= G_0\leq\dots\leq G_l=G\] 
such that each factor $G_{i+1}/G_i$ is either pronilpotent 
or a Cartesian product of non-abelian finite simple groups.
Let $l$ be the minimum of lengths of such series. 
If $l=1$, then $G$ is either pronilpotent or finite.
We therefore assume that $l\geq2$ and use induction on $l$.
Set $R=G_{l-1}$ and observe that by induction $R$ is virtually pronilpotent. Of course, we can assume that $R$ is infinite and therefore $F(R)\neq1$.

Let $N$ be the maximal open normal pronilpotent subgroup in $R$. As in the proof of Lemma \ref{metanilp}, let $H={\{x\in G\ \vert\ [R,x]\leq N\}}$.
Since $R/N$ is finite, it is clear that $H$ is an open subgroup in $G$.
Note that the image of $R\cap H$ in $H/N$ is central. It is sufficient to show that $H$ is virtually pronilpotent. Thus, without loss of generality we can assume that $R/N$ is central in $G/N$.

Suppose first that $G/R$ is pronilpotent. Since $R/N$ is central in $G/N$, it follows that $G/N$ is pronilpotent and the theorem is immediate from {Lemma~\ref{metanilp}}.

Therefore it remains to deal with the case where $G/R$ is a Cartesian product of finite non-abelian simple groups. In that case $G$ is a product of its normal subgroups $S_i$, where $i\in I$, such that $S_i/R$ is a simple direct factor of $G/R$. Assume that the set of indices $I$ is infinite. Let $P$ be a Sylow $p$-subgroup of the pronilpotent subgroup $N$. Since $G$ is CN, we have $C_G(P)\leq F(G)$. For each $i\in I$ choose a $p'$-element $a_i\in S_i \setminus R$. Note that the image of the subgroup $A=\langle a_i\ \vert\ i\in I\rangle$ in $G/N$ is infinite and nilpotent of class at most two. In particular, $A$ is prosoluble. Let $B$ be a Hall $p'$-subgroup of $A$. Lemma \ref{golova} shows that the image of $B$ in $G/F(G)$ is finite. Then it follows that also the image of $A$ in $G/F(G)$ is finite. This yields a contradiction since the image of $A$ in $G/N$ is infinite while $N$ has finite index in $F(G)$.
\end{proof}
\section{Useful lemmas}

Let $G$ be a profinite CN-group. We will write $F$ for $F(G)$ and $\pi$ for $\pi(F)$. Of course, whenever $p\in\pi$ the Sylow $p$-subgroup of $F$ is precisely $O_p(G)$. Theorem \ref{main} shows that $G/F$ is finite. It is straightforward from Lemma \ref{golova} that if $F$ is not pro-$p$, then Sylow subgroups of $G/F$ are either cyclic or generalized quaternion. Moreover if $F$ is pro-$p$ for some prime $p$, then for any prime $q\neq p$ Sylow $q$-subgroups of $G/F$ are either cyclic or generalized quaternion. 

\begin{lemma}\label{ooo} Assume that $\pi$ contains at least two primes.  Let $p\in\pi$ and $P=O_p(G)$. Suppose that $a\in G$ a $p$-element such that $a\not\in P$. Then $C_P(a)=1$. In particular, $\langle a\rangle\cap F=1$.
\end{lemma}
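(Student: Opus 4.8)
The plan is to exploit the CN-hypothesis together with the coprime-action machinery already developed. We are given that $\pi=\pi(F)$ contains at least two primes, $P=O_p(G)$, and $a$ is a $p$-element with $a\notin P$. First I would pick a prime $q\in\pi$ with $q\neq p$ and set $Q=O_q(G)$, which is a nontrivial pro-$q$ subgroup of $F$. The key observation is that since $F$ is pronilpotent and normal, $a$ normalizes $Q$, and $\langle a\rangle$ is a pro-$p$ group acting on the pro-$q$ group $Q$ with coprime orders. If $a$ centralized any nontrivial element of $Q$, or more to the point if $C_Q(a)\neq 1$, I would extract a contradiction from the CN-property.

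The central step is to show $C_P(a)=1$. Suppose, for contradiction, that there is a nontrivial $x\in C_P(a)$. Then $a\in C_G(x)$, and since $x\in P\leq F$, I would aim to show that $C_G(x)$ contains elements of two different primes that fail to commute, contradicting pronilpotency of $C_G(x)$. More precisely, the idea is that $\langle x,a\rangle$ is a pro-$p$ group, and I want to bring in a $q$-element. Because $Q$ is normal and $C_G(x)$ is pronilpotent, I would consider $C_Q(x)$: if this were nontrivial it would give a $q$-element in the pronilpotent group $C_G(x)$, which would then have to commute with $a$ (both lying in $C_G(x)$), forcing $a$ to centralize a nontrivial $q$-element. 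The cleaner route is probably to first establish, via Lemma~\ref{tec} or Lemma~\ref{golova} applied to the action of the infinite procyclic or the relevant $p'$-part, that the interplay between $a$ and $Q$ is tightly constrained, and then to use the fact that $C_G(x)$ being pronilpotent forces any two elements of coprime order inside it to commute.

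Concretely, here is the mechanism I would try to make rigorous. Since $x\in P\subseteq F$ and $Q\subseteq F$ with $F$ pronilpotent, $x$ and every element of $Q$ commute, so $Q\leq C_G(x)$. Also $a\in C_G(x)$. As $C_G(x)$ is pronilpotent and contains both the pro-$q$ subgroup $Q$ and the $p$-element $a$ of coprime order, we get $[Q,a]=1$, i.e. $a$ centralizes $Q$. Now I invoke the CN-property at a nontrivial element $y\in Q$: $a\in C_G(y)$, and $C_G(y)$ is pronilpotent and contains $a$ together with $x$ (since $x$ commutes with $y$ too). This much only reproduces pronilpotency of $\langle a,x\rangle$, which is automatic. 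The real leverage must come from producing an element that genuinely forces a non-commuting coprime pair, so the decisive point is to show that centralizing both $P$-part and $Q$-part pushes $a$ into $F$, contradicting $a\notin P$ once we know $a\in C_G(y)$ forces $a$ to lie in $O_p(C_G(y))$.

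I expect the main obstacle to be the final descent showing that $C_P(a)=1$ forces $a$ itself into the appropriate local structure, and in particular deriving the concluding assertion $\langle a\rangle\cap F=1$. For the last statement, once $C_P(a)=1$ is known, I would argue that any nontrivial element $z\in\langle a\rangle\cap F$ is a $p$-element lying in $F$, hence in $P=O_p(G)$; but $z$ is a power of $a$ and therefore $z\in C_G(a)\cap P\subseteq C_P(a)=1$, a contradiction, giving $\langle a\rangle\cap F=1$. The delicate part is organizing the coprime-centralizer arguments so that the CN-hypothesis is genuinely used to rule out $C_Q(a)\neq 1$ and $C_P(a)\neq 1$ simultaneously; getting the order of quantifiers right (fixing one nontrivial fixed point and tracing which centralizer it forces to be non-pronilpotent) will be where the care is needed.
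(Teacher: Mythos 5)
Your first half reproduces the paper's argument correctly: from a nontrivial $x\in C_P(a)$ you get $Q\leq C_G(x)$ and $a\in C_G(x)$ (where $Q=O_q(G)\neq1$ for some $q\in\pi\setminus\{p\}$), and pronilpotency of $C_G(x)$ forces $[Q,a]=1$ since $a$ and $Q$ have coprime orders. Your derivation of $\langle a\rangle\cap F=1$ from $C_P(a)=1$ at the end is also correct. The genuine gap is the step you yourself flag as ``the decisive point'': passing from $a\in C_G(Q)$ to $a\in F$. The route you sketch, via $a\in O_p(C_G(y))$ for a nontrivial $y\in Q$, does not accomplish this --- that containment holds automatically for any $p$-element of the pronilpotent group $C_G(y)$ and gives no information about membership in $F=F(G)$.

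The missing observation is short. Since $Q$ is normal in $G$, so is $C_G(Q)$; and since $Q\neq1$, the subgroup $C_G(Q)$ is contained in $C_G(y)$ for any nontrivial $y\in Q$ and is therefore pronilpotent by the CN hypothesis. Hence $C_G(Q)$ is a normal pronilpotent subgroup of $G$, so $C_G(Q)\leq F$ by maximality of $F$. Thus $a\in F$, and as $a$ is a $p$-element of the pronilpotent group $F$ it lies in the Sylow $p$-subgroup $O_p(G)=P$, contradicting $a\notin P$. With this inserted your proof coincides with the paper's; the only cosmetic difference is that the paper argues with $C_G(P_0)$ for $P_0=C_P(a)$ rather than with $C_G(x)$ for a single nontrivial $x\in C_P(a)$, which changes nothing.
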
 
\begin{proof} Suppose that $P_0=C_P(a)\neq1$. Observe that since $\pi$ contains at least two primes, there is a nontrivial normal pronilpotent pro-$p'$ subgroup $Q$ in $G$. The centralizer $C_G(P_0)$ contains both $a$ and $Q$. Taking into account that $C_G(P_0)$ is pronilpotent we deduce that $a$ centralizes $Q$. Since $Q$ is normal and $C_G(Q)$ is pronilpotent, it follows that $a\in F$ and more precisely $a\in P$. This is a contradiction that shows that $C_P(a)=1$. In particular, we conclude that $\langle a\rangle\cap P=1$.
\end{proof}

\begin{lemma}\label{hah1} Suppose that $\pi$ contains at least two primes. Let $p$ be a prime contained in $\pi\cap\pi(G/F)$. Then $O_p(G)$ is a torsion-free locally nilpotent group.  
\end{lemma}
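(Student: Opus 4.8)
The plan is to exploit a suitable $p$-element $a$ of $G$ lying outside $F$ and to show that the conjugation action of $a$ on $P=O_p(G)$ is a splitting automorphism of order $p$, after which local nilpotence follows from Khukhro's theorem and torsion-freeness follows from a fixed-point argument.

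First I would produce the element $a$. Since $p\in\pi(G/F)$ and $G/F$ is finite by Theorem \ref{main}, the quotient $G/F$ has an element of order $p$; lifting it to a $p$-element of $G$ I obtain a $p$-element $a\in G\setminus F$. Because $\pi$ contains at least two primes and $a\notin P$, Lemma \ref{ooo} gives $C_P(a)=1$ and $\langle a\rangle\cap F=1$. The latter shows that every nontrivial power of $a$ again lies outside $F$, so replacing $a$ by an appropriate power I may assume that $a$ has order exactly $p$. If $P=1$ there is nothing to prove, so I assume $P\neq1$; then $C_P(a)=1$ forces $a$ to induce a nontrivial automorphism of $P$ of order $p$.

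Next I would verify that this automorphism is splitting. Fix $x\in P$. Then $xa$ is a $p$-element, since it lies in the pro-$p$ group $P\langle a\rangle$, and $xa\notin P$, since otherwise $a=x^{-1}(xa)\in F$. Applying Lemma \ref{ooo} to $xa$ yields $\langle xa\rangle\cap P=1$. On the other hand the image of $xa$ in $P\langle a\rangle/P\cong\langle a\rangle$ equals the image of $a$, whose $p$-th power is trivial, so $(xa)^p\in P$. Combining these, $(xa)^p\in\langle xa\rangle\cap P=1$, which is precisely the splitting condition for $a$. Theorem \ref{khukhro} now shows that $P$ is locally nilpotent.

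It remains to prove that $P$ is torsion-free, and this is the step I expect to be the main obstacle, since the fixed-point-freeness $C_P(a)=1$ is only a profinite phenomenon and fails in the finite quotients of $P$. Suppose $P$ contained a nontrivial torsion element; taking a suitable power I may assume there is $t\in P$ of order $p$. Consider $D=\langle t,t^a,\dots,t^{a^{p-1}}\rangle$, which is $\langle a\rangle$-invariant because $a^p=1$. Being finitely generated inside the locally nilpotent group $P$, the subgroup $D$ is nilpotent; as it is generated by elements of order $p$ and the torsion elements of a nilpotent group form a subgroup, $D$ is a torsion group, hence a finite $p$-group. Thus $D$ is a nontrivial finite $p$-subgroup normalized by the $p$-element $a$, so $D\langle a\rangle$ is a finite $p$-group and $D\cap Z(D\langle a\rangle)\neq1$. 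Any nontrivial element of this intersection lies in $C_D(a)\leq C_P(a)$, contradicting $C_P(a)=1$. Therefore $P$ is torsion-free, completing the proof.
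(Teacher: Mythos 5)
Your proof is correct and follows essentially the same route as the paper: Lemma \ref{ooo} yields $(xa)^p=1$ for all $x\in P$, so conjugation by $a$ is a splitting automorphism of order $p$ and Khukhro's Theorem \ref{khukhro} gives local nilpotence, after which torsion-freeness follows from $C_P(a)=1$. Your final step merely spells out in detail what the paper compresses into the remark that $\langle a,y\rangle$ is finite and nilpotent.
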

\begin{proof} Set $P=O_p(G)$ and choose a $p$-element $a\in G$ such that $a\not\in P$. In view of Lemma \ref{ooo} for any $x\in P$ the order of $ax$ is finite and equals that of $a$. Let $b$ be an element of order $p$ in $\langle a\rangle$. We see that $bx$ has order $p$ for each $x\in P$. Khukhro's Theorem \ref{khukhro} states that $P$ is locally nilpotent. It remains to show that $P$ is torsion-free. 

By contradiction, assume that $P$ contains a nontrivial element $y$ of finite order. Since $P$ is locally nilpotent, it follows that the subgroup $\langle a,y\rangle$ is finite and nilpotent. This leads to a contradiction because $C_P(a)=1$. The proof is complete. 
\end{proof}

\begin{lemma}\label{hah2} Let $P$ be a locally nilpotent pro-$p$ subgroup of $G$ and $a$ a $p'$-element such that $C_P(a)\neq1$. Then $[P,a]=1$.
\end{lemma}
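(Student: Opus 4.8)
The plan is to show that $C:=C_P(a)$ is all of $P$. The engine of the proof is the observation that $C$ is closed under taking centralizers in $P$: for every nontrivial $y\in C$ one has $C_P(y)\subseteq C$. To see this, note that $y\in C_P(a)$ means $a\in C_G(y)$, and since $G$ is a CN-group and $y\neq1$ the centralizer $C_G(y)$ is pronilpotent. Any $w\in C_P(y)$ lies in $C_G(y)$ and is a $p$-element, while $a$ is a $p'$-element of $C_G(y)$; as elements of coprime orders in a pronilpotent group they commute, whence $w\in C_P(a)=C$. This establishes the desired self-centralizing property.

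With this in hand I would argue by contradiction. Suppose $C\neq P$, and pick a nontrivial $x\in C$ (possible since $C_P(a)\neq1$) together with some $z\in P\setminus C$. Here is where local nilpotency enters: the (topologically) two-generated subgroup $H=\langle x,z\rangle$ is nilpotent, hence its center $Z(H)$ is nontrivial. Choose $1\neq u\in Z(H)$. Since $u$ commutes with $x$ we have $u\in C_P(x)\subseteq C$, so $u$ is a nontrivial element of $C$; and since $u$ also commutes with $z$ we get $z\in C_P(u)\subseteq C$, contradicting $z\notin C$. Therefore $C=P$, that is, $[P,a]=1$.

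The main obstacle is really the first paragraph: recognizing that the CN-hypothesis, applied to the single nontrivial fixed point $y$, forces $a$ to centralize the whole of $C_P(y)$, thereby upgrading a single fixed point into a centralizer-closed set. Once this is isolated, local nilpotency does the rest almost for free, by supplying, for any pair $x\in C$ and $z\notin C$, a nontrivial element $u$ central in $\langle x,z\rangle$ that simultaneously commutes with $x$ and $z$ and hence bridges $z$ into $C$. I do not expect to need the global center $Z(P)$, which for an infinite locally nilpotent pro-$p$ group need not be nontrivial; working inside the finitely generated, genuinely nilpotent subgroup $\langle x,z\rangle$ sidesteps that issue entirely.
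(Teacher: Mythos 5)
Your proof is correct and follows essentially the same route as the paper: the ``centralizer-closure'' property you isolate is exactly the paper's step that $a,z\in C_G(x)$ forces $a$ and $z$ to commute, applied twice, and both arguments hinge on extracting a nontrivial central element of the nilpotent subgroup $\langle x,z\rangle$. The only difference is presentational (you argue by contradiction where the paper argues directly for an arbitrary $y\in P$).
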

\begin{proof} Let $1\neq x\in C_P(a)$ and choose an arbitrary element $y\in P$. Since $\langle x,y\rangle$ is nilpotent, there is a nontrivial element $z\in Z(\langle x,y\rangle)$. We see that $a,z\in C_G(x)$. Since $C_G(x)$ is pronilpotent, $a$ and $z$ commute. Thus $a,y\in C_G(z)$. Since $C_G(z)$ is pronilpotent, $a$ and $y$ commute. So $a$ centralizes an arbitrary element $y$ of $P$. The result follows.
\end{proof}  

\begin{lemma}\label{hap3} Suppose that $\pi$ contains at least two different primes. Then nontrivial subgroups of $G/F(G)$ are not Frobenius.
\end{lemma}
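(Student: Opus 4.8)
The plan is to argue by contradiction, suppose that some nontrivial subgroup $\bar L=\bar K\rtimes\bar H$ of $G/F$ is Frobenius with kernel $\bar K$ and complement $\bar H$, and exploit throughout the following commuting criterion. Since $F$ is pronilpotent it centralizes $Z(F)=\prod_{p\in\pi}Z(O_p(G))$, so the finite group $\bar L$ acts on the abelian group $Z(F)$ without any inner-automorphism ambiguity. The decisive observation is a \emph{dichotomy}: if an element $g\in G$ of prime order $r$ with $\bar g\neq 1$ fixes some nontrivial $m\in Z(F)$, then $\langle F,g\rangle\le C_G(m)$ is pronilpotent, and hence $g$ centralizes $O_q(G)$ for every prime $q\neq r$; consequently $g$ is either fixed-point-free on $Z(F)$ or centralizes the entire $r'$-part of $F$. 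The target is then to produce a single nontrivial $z\in Z(F)$ fixed simultaneously by a nontrivial kernel element $k$ and a nontrivial complement element $h$ of coprime orders: since $h,k\in C_G(z)$ and $C_G(z)$ is pronilpotent, this forces $[h,k]=1$, whence $[\bar h,\bar k]=1$, and as $C_{\bar K}(\bar h)=1$ we get $\bar k=1$, a contradiction.

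First I would reduce the configuration. Replacing $\bar H$ by a subgroup of prime order $t$ (every nontrivial element of a Frobenius complement still acts fixed-point-freely) and $\bar K$ by an $\bar h$-invariant Sylow $s$-subgroup, I may assume $\bar L=\bar S\rtimes\langle\bar h\rangle$ with $\bar S$ a nontrivial $s$-group, $|\bar h|=t$ and $s\neq t$. Lifting $\bar h$ to a $t$-element $h$ and a nontrivial element of $\bar S$ to an $s$-element $k$, Lemma~\ref{ooo} (this is where $|\pi|\geq 2$ enters) forces these lifts to have order exactly $t$ and $s$; thus $h,k$ have coprime orders and $[\bar h,\bar k]\neq 1$ implies $[h,k]\neq 1$, so the common fixed point sought above really does yield a contradiction.

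The engine driving the existence of fixed points is Lemma~\ref{fro} applied prime by prime to the abelian groups $M_p=Z(O_p(G))$. For a prime $p\neq s$ the kernel $\bar S$ acts coprimely on $M_p$; if it were fixed-point-free there, Lemma~\ref{fro} would give $M_p=\langle C_{M_p}(\bar h)^{\bar S}\rangle$, forcing $C_{M_p}(\bar h)\neq 1$ because $M_p\neq 1$. Feeding this back through the dichotomy, I expect to show that neither $\bar S$ nor $\langle\bar h\rangle$ can be fixed-point-free on all of $Z(F)$, and that in the presence of a prime $q\in\pi\setminus\{s,t\}$ one element of each of $\bar S$ and $\langle\bar h\rangle$ centralizes the whole $\{s,t\}'$-part of $F$; picking $1\neq z\in O_q(G)$ then completes the argument.

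The main obstacle is guaranteeing the non-fixed-point-free alternative for \emph{both} elements at once, and above all the residual case $\pi=\{s,t\}$, where the $\{s,t\}'$-part is empty and cheap common fixed points are unavailable ($h$ is fixed-point-free on $O_t(G)$ and $k$ on $O_s(G)$ by Lemma~\ref{ooo}). Here I would apply Lemma~\ref{fro} to $M_t=Z(O_t(G))$: since $C_{M_t}(\bar h)\le C_{O_t(G)}(h)=1$, the kernel cannot be fixed-point-free on $M_t$, so $\bar S$ fixes some nontrivial $m\in Z(O_t(G))\le Z(F)$, which places the whole kernel preimage $K^*$ inside the pronilpotent group $C_G(m)$ and makes $K^*=S_0\times O_t(G)$ pronilpotent. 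As $O_s(G)$ is locally nilpotent (Lemma~\ref{hah1}, since $s\in\pi\cap\pi(G/F)$), I would then use Lemma~\ref{hah2} to spread any fixed point of $h$ on $O_s(G)$ and match it against the fixed points of $S_0$ on $Z(O_s(G))$, again exhibiting one nontrivial element centralized by both a kernel element and $h$. Pinning down this coincidence of fixed points, and handling the delicate small-prime behaviour (notably $s=2$, where an involution may act fixed-point-freely on an abelian pro-$2$ group, so the center argument fails), is where the genuine work lies; the remainder is routine bookkeeping with Lemmas~\ref{cc}, \ref{ooo} and \ref{hah2}.
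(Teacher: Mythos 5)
Your overall strategy --- lift the Frobenius configuration via Lemma~\ref{ooo}, use Lemma~\ref{fro} to manufacture fixed points, and use pronilpotence of centralizers to turn a fixed point into membership in $F$ --- is essentially the paper's strategy. But there is a genuine gap at the foundation: everything in your plan is routed through the abelian groups $M_p=Z(O_p(G))$, and you never justify that these are nontrivial. An infinite pro-$p$ group can have trivial centre, and at this stage of the paper nothing rules this out for $O_p(G)$: for instance the direct product of a nonabelian free pro-$p$ group with $\mathbb{Z}_q$ is a pronilpotent CN-group with $|\pi|=2$ whose Sylow $p$-subgroup has trivial centre, so the hypotheses ``CN and $|\pi|\ge 2$'' alone do not give you $Z(O_p(G))\neq 1$. (Nilpotence of $F$ is only obtained later, in Lemma~\ref{hap44}, and under the additional hypothesis that $\pi(G/F)$ also contains two primes.) If the relevant centres are trivial, all your applications of Lemma~\ref{fro} produce nothing and the argument collapses. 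The repair is exactly what the paper does: apply Lemma~\ref{fro} to the full Sylow subgroups $P=O_p(G)$ and $Q=O_q(G)$ of $F$ --- the lemma does not require the acted-on group to be abelian --- and replace ``$F\le C_G(m)$'' by the weaker but sufficient observation that for $1\neq m\in P$ one still has $O_q(G)\le C_G(m)$ for every $q\neq p$, because $F$ is pronilpotent.

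A smaller point: your ``dichotomy'' is not really a dichotomy. If $g$ of prime order $r$ with $\bar g\neq 1$ centralizes the whole $r'$-part of $F$, then, since $|\pi|\ge 2$, it centralizes some nontrivial $O_q(G)$; but $C_G(O_q(G))$ is a pronilpotent normal subgroup and hence lies in $F$, forcing $\bar g=1$. So, granting nontrivial centres, every prime-order element outside $F$ would be fixed-point-free on $Z(F)$, and a single application of Lemma~\ref{fro} to $M_p$ for one prime $p\in\pi\setminus\{s\}$ would already yield the contradiction --- no common fixed point, no separate treatment of $\pi=\{s,t\}$, no worry about $s=2$. That your plan instead branches into an elaborate and admittedly unfinished case analysis is a symptom of the real difficulty having been hidden inside the unproved assumption $Z(O_p(G))\neq 1$. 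Once you work with $P$ itself rather than $Z(P)$, the one genuinely delicate situation is $\pi=\{p,r\}$ with the fixed point lying in $O_p(G)$ and $g$ of prime order $r$; this is precisely the paper's case $r=q$, which it resolves with Lemmas~\ref{hah1}, \ref{hah2} and~\ref{fro} applied to $Q=O_r(G)$.
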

\begin{proof} Suppose that the lemma is false and choose a counter-example with  $|G/F(G)|$ as small as possible. Then $G/F(G)$ is a Frobenius group. Since the Sylow subgroups of $G/F(G)$ are cyclic, or generalized quaternion, the Frobenius kernel and complement of $G/F(G)$ are of prime order, say of order $r$ and $s$, respectively. Further, by Lemma \ref{ooo} elements of prime-power order in $G\setminus F$ have finite order. Therefore $G$ has a finite Frobenius subgroup $KH$ with kernel $K$ of prime order $r$ and complement $H$ of prime order $s$ such that $G=FKH$ and $F\cap KH=1$. Let $p,q\in\pi$, and let $P$ and $Q$ be $p$-Sylow and $q$-Sylow subgroups of $F$.

Suppose first that $C_P(K)\neq1$. Then $QK$ is pronilpotent because $Q,K\leq C_P(K)$. If $r\neq q$, then $K\leq C_G(Q)\leq F$ which is a contradiction. 
Therefore $r=q$ and, by Lemma \ref{hah1}, $Q$ is locally nilpotent. Lemma \ref{hah2} shows that $C_Q(H)=1$ and therefore, because of Lemma \ref{fro}, $C_Q(K)\neq1$. Observe that $PK$ centralizes $C_Q(K)$ and so $[P,K]=1$, a contradiction.

Thus, $C_P(K)=1$. By a symmetric argument $C_Q(K)=1$. It follows from Lemma \ref{fro} that $C_P(H)\neq1$ and $C_Q(H)\neq1$. We deduce that both $PH$ and $QH$ are pronilpotent. Recall that $H$ has prime order. Therefore $H$ centralizes at least one of the subgroups $P$ and $Q$, whence $H\leq F$. This is a contradiction.
\end{proof}

Let $p$ be a prime. A normal subgroup $N$ of a finite group $K$ is a normal $p$-complement if $N=O_{p'}(K)$ and $K/N$ is a $p$-group. The well-known theorem of Frobenius states that $K$ possesses a normal $p$-complement if and only if $N_K(H)/C_K(H)$ is a $p$-group for every nontrivial $p$-subgroup $H$ of $K$ (see \cite[7.4.5]{go}).

The next lemma provides a sufficient condition under which $G/F$ is isomorphic to $SL(2,3)$. Recall that the group $SL(2,3)$ has order 24 and is isomorphic to a semidirect product of the quaternion group $Q_8$ by the cyclic group of order 3 which acts on $Q_8$ nontrivially.

\begin{lemma}\label{hah4} Suppose that $\pi$ contains at least two primes and $\pi\cap\pi(G/F)$ is non-empty. Assume that $G/F$ is not nilpotent. Then  $G/F\cong SL(2,3)$.
\end{lemma}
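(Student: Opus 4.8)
The plan is to reduce everything to the finite quotient $\bar G := G/F$, which by Theorem \ref{main} is finite, by hypothesis is non-nilpotent, and whose Sylow subgroups are cyclic or generalized quaternion (by the remark following Theorem \ref{main}, since $\pi$ has at least two primes and so $F$ is not pro-$p$). I would fix a prime $p\in\pi\cap\pi(G/F)$ and set $P=O_p(G)$. By Lemma \ref{hah1} the group $P\neq1$ is torsion-free and locally nilpotent; by Lemma \ref{ooo} every $p$-element of $G$ outside $F$ centralizes no nontrivial element of $P$; and by Lemma \ref{hah2} a $p'$-element either centralizes $P$ or acts on it without nontrivial fixed points. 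Finally, Lemma \ref{hap3} forbids any nontrivial subgroup of $\bar G$ from being a Frobenius group. The whole argument is driven by these fixed-point-free constraints together with the coprime-automorphism restrictions of Lemma \ref{ccc}.

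First I would pin down the $2$-structure of $\bar G$. Suppose a Sylow $2$-subgroup of $\bar G$ is cyclic. Then for any $2$-subgroup $H$ the automizer $N_{\bar G}(H)/C_{\bar G}(H)$ embeds in $\mathrm{Aut}(H)$, which is a $2$-group for a cyclic $2$-group (Lemma \ref{ccc}(i) confirms that odd-order automorphisms act trivially). Frobenius's normal $p$-complement theorem, stated just before the present lemma, then gives $\bar G$ a normal $2$-complement $V$, an odd group all of whose Sylow subgroups are cyclic and hence metacyclic. I would show this configuration is incompatible with the hypotheses: passing to a minimal non-nilpotent section of $\bar G$ and using property $(\star)$, such a section is either a Frobenius group, excluded by Lemma \ref{hap3}, or a semidirect product in which some nontrivial element acts on $P$ with a fixed point and is therefore pushed into $F$ by Lemmas \ref{ooo} and \ref{hah2}, contradicting its survival in $\bar G$. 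Hence a Sylow $2$-subgroup $\bar R$ of $\bar G$ is generalized quaternion.

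With $\bar R$ generalized quaternion, non-nilpotency produces an odd-order element $\bar c$ acting nontrivially on $\bar R$ (if $\bar R$ is not normal I would first reduce to a suitable normal quaternion subgroup by locating the central involution, via Brauer--Suzuki). Because the action is coprime, Lemma \ref{ccc}(iii) forces $\bar R=Q_8$ and $\langle\bar c\rangle$ to induce an automorphism of order $3$; thus $\bar G$ contains a copy of $Q_8\rtimes C_3=SL(2,3)$. To obtain equality I would argue that odd-order elements act on $Q_8$ through the odd part of $\mathrm{Aut}(Q_8)\cong S_4$, which is exactly $C_3$, so the image of the $2$-complement is $C_3$; its kernel $C_{\bar G}(\bar R)$ has trivial odd part, since a nontrivial normal odd subgroup centralizing the unique involution of $\bar R$ is forced into $F$ by the CN-condition combined with the fixed-point-free action on $P$. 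A larger generalized quaternion group admits no coprime automorphism of order $3$ (again Lemma \ref{ccc}(iii)), and any further odd prime would build a Frobenius section forbidden by Lemma \ref{hap3}. This leaves $\bar G\cong SL(2,3)$.

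The hard part will be the cyclic-Sylow-$2$ case of the second paragraph: ruling out non-nilpotent quotients whose non-nilpotency is carried entirely by cyclic or odd Sylow subgroups. Frobenius-freeness alone does \emph{not} settle this, since some non-nilpotent metacyclic groups, for instance $C_r\rtimes C_{s^b}$ with $b\ge 2$ and a central $s$-part, contain no Frobenius subgroup at all; so the elimination must genuinely exploit the fixed-point-free action of the distinguished prime $p$ on $P=O_p(G)$ through Lemmas \ref{ooo}, \ref{hah1} and \ref{hah2}, forcing the offending coprime-acting elements back into $F$. By contrast, once the generalized-quaternion case is reached it resolves cleanly, because Lemma \ref{ccc}(iii) rigidly dictates both the isomorphism type $Q_8$ and the order-$3$ action that characterize $SL(2,3)$.
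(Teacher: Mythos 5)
Your outline shares some ingredients with the paper's proof (Frobenius' normal $p$-complement theorem, Lemma \ref{ccc}(iii) to force $Q_8\rtimes C_3$, Lemma \ref{hap3} to exclude Frobenius sections, and $\mathrm{Aut}(Q_8)\cong S_4$ for the endgame), but it has a genuine gap at exactly the step you yourself flag as hard, and the mechanism you propose there cannot close it. To rule out ``odd'' non-nilpotency you want to exploit the fixed-point-free action of elements outside $F$ on $P=O_p(G)$ for the distinguished $p\in\pi\cap\pi(G/F)$. But the problematic sections $C_q\rtimes C_{s^b}$ with $b\ge 2$ and central $s$-part are Frobenius complements: they admit perfectly good fixed-point-free actions, so nothing about acting on $P$ forces any of their elements to acquire a fixed point and fall into $F$. (If $P$ were non-nilpotent one could invoke Theorem \ref{higm} to manufacture fixed points, but $P$ nilpotent is entirely possible here.) The paper's actual mechanism is different and is the heart of the proof: given $\bar a$ of $q$-power order normalizing but not centralizing $\langle\bar x\rangle$ with $|\bar x|=p$ odd, the order-$q$ element $\bar b\in\langle\bar a\rangle$ must centralize $\bar x$ (else $\langle\bar b,\bar x\rangle$ is Frobenius); one then lifts $a,b,x$ to $G$, takes a Sylow $p$-subgroup $P$ of $F\langle a,x\rangle$ normalized by $a$, writes $P=(P\cap F)C_P(b)$ by coprime action, and uses pronilpotency of $C_G(b)$ to force $a$ to commute with $C_P(b)$, whence $P=(P\cap F)C_P(a)$ and $\bar a$ centralizes $\bar x$ after all. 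This is a centralizer argument in $G$, not a fixed-point argument on $O_p(G)$, and your proposal contains no substitute for it.

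There is a second gap in your third paragraph. The claim that ``any further odd prime would build a Frobenius section forbidden by Lemma \ref{hap3}'' is false: a group such as $SL(2,3)\times C_r$ contains no Frobenius subgroup whatsoever, so Frobenius-freeness cannot exclude extra odd primes. The paper handles this by showing such a prime $r$ would produce a central element $\bar d$ of order $r$ in $\bar G$, lifting it to $d\in G$ of order $r$, and exhibiting a subgroup of the pronilpotent group $C_G(d)$ whose image modulo $F$ is the non-nilpotent group $SL(2,3)$ --- again a CN-specific centralizer contradiction rather than a Frobenius one. Relatedly, your appeal to Brauer--Suzuki to normalize the quaternion Sylow $2$-subgroup is both heavy and unsubstantiated; in the paper normality comes for free once one knows $\bar G$ has a normal $r$-complement for every odd $r$, which is the output of the step discussed above. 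The closing identification of $\bar G$ with $SL(2,3)$ via $\mathrm{Aut}(Q_8)$ is fine once these two steps are supplied, but as written the proposal does not prove the lemma.
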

\begin{proof}  Set $\bar{G}=G/F$ and write $\bar{X}$ for the image of a subgroup $X$ in $\bar{G}$. We know that  $\bar{G}$ is finite with Sylow subgroups cyclic or quaternion. Being non-nilpotent, $G/F$ does not possess a normal $p$-complement for some prime $p$. 

Suppose that $p$ is odd and so the Sylow $p$-subgroup of $\bar{G}$ is cyclic. Let $\bar{x}$ be an element of order $p$ in the Sylow $p$-subgroup. By the normal $p$-complement theorem of Frobenius there exists a $p'$-element $\bar{a}$ in $\bar{G}$ which normalizes $\langle\bar{x}\rangle$ without centralizing it. We can choose $\bar{a}$ of $q$-power order for some prime $q\neq p$. Let $\bar{b}$ be an element of order $q$ in $\langle\bar{a}\rangle$. If $\bar{b}$ does not centralize $\bar{x}$, then the subgroup $\langle\bar{b},\bar{x}\rangle$ is Frobenius which leads to a contradiction with Lemma \ref{hap3}. Hence, $\bar{b}$ centralizes $\bar{x}$. By Lemma \ref{ooo} we can choose an element $a\in G$ which maps on $\bar{a}$ and has the same order as $\bar{a}$. Of course, there is $b\in\langle a\rangle$ which maps on $\bar{b}$. We can also choose an element $x\in G$ of order $p$ which maps on $\bar{x}$. Let $P$ be a Sylow $p$-subgroup of $K=F\langle a,x\rangle$ containing $x$. Note that $FP$ is normal in $K$ and so by the Frattini argument $K=FN_K(P)$. Replacing $a$ by a conjugate we can assume that $a$ normalizes $P$. Since $\bar{b}$ centralizes $\bar{x}$, it follows that $[P,b]\leq F$ and so $P=P_1C_P(b)$, where $P_1=P\cap F$. Both $C_P(b)$ and $a$ are contained in $C_G(b)$ and so $C_P(b)$ and $a$ commute because $C_G(b)$ is pronilpotent. It follows that  $P=P_1C_P(a)$, which contradicts the fact that $\bar{a}$ normalizes $\langle\bar{x}\rangle$ without centralizing it. In particular, we have proved that $\bar{G}$ has a normal $r$-complement for each odd prime $r\in\pi(\bar{G})$.

Thus $p=2$ and there exists a $2'$-element $\bar{a}$ in $\bar{G}$ which normalizes a 2-subgroup without centralizing it. Recall that the Sylow 2-subgroup of $\bar{G}$ is either cyclic or (generalized) quaternion. In view of Lemma \ref{ccc} we deduce that $\bar{a}$ normalizes a subgroup $\bar{Q}\cong Q_8$ and $\bar{a}^3$ centralizes $\bar{Q}$. Arguing as above we can assume $a$ normalizes a pro-2 subgroup $Q$ of $G$ which maps on $\bar{Q}$.  Again, $Q=Q_1C_Q(a^3)$ where $Q_1$ is the Sylow 2-subgroup of $F$. From this (and Lemma \ref{ooo}) we deduce that $a^3=1$ and $a$ has order three. 

Let us show that $\bar{G}$ has no normal Sylow $r$-subgroups for $r\neq2$. Suppose on the contrary that for an odd prime $r$ the Sylow $r$-subgroup $\bar{R}$ is a normal in $\bar{G}$. We already know that $r\neq3$ since $\bar{a}$ does not centralize $\bar{Q}$. Let $\bar{d}$ be an element of order $r$ in $\bar{R}$. Since $\bar{R}$ is normal and $\bar{G}$ has a normal $r$-complement, it follows that $\bar{d}$ is central in $\bar{G}$. Choose $d\in G$ such that $d$ has order $r$ and maps on $\bar{d}$. Set $T=\langle FQ,a,d\rangle$. Note that $FQ\langle a\rangle$ is normal in $T$. Let $U$ be a Hall $\{2,3\}$-subgroup of $T$ containing both $Q$ and $a$. By the Frattini argument $N_T(U)$ contains a conjugate of $d$. Therefore we can assume that $d\in N_T(U)$. On the one hand, $C_U(d)$ is pronilpotent. On the other hand $C_U(d)F/F$ is isomorphic to $SL(2,3)$. This is a contradiction because $SL(2,3)$ is not nilpotent.
Thus, indeed $\bar{G}$ has no normal Sylow $r$-subgroups for $r\neq2$. Since $G$ has a normal 3-complement, it follows that the Sylow 2-subgroup of $\bar{G}$ is isomorphic to $Q_8$ and so $\bar{G}\cong SL(2,3)$, as required.
\end{proof}  
In the next lemma $F_2(G)$ stands for the second Fitting subgroup of $G$. Therefore we have $F_2(G)/F=F(G/F)$. 
\begin{lemma}\label{op1} Suppose that $\pi(F_2(G))\not\subset\pi$. Let $H$ be a $\pi'$-Hall subgroup of $F_2(G)$ and $K=N_G(H)$. Then $K/F(K)$ is cyclic. Moreover, if $K$ is not pronilpotent then both $K$ and its image in $G/F$ are Frobenius groups.
\end{lemma}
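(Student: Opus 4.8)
The plan is to realise $G$ as $FK$ by a Frattini argument and then to read off the structure of $K$ from its conjugation action on the normal subgroup $H$, exploiting throughout that centralizers in $G$ are pronilpotent. First I would check that $HF$ is normal in $G$: since $F$ is pro-$\pi$ and $F_2(G)/F=F(G/F)$ is nilpotent, the image $HF/F$ is the $\pi'$-Hall subgroup of the nilpotent group $F(G/F)$, hence characteristic in it and so characteristic in $G/F$; thus $HF\trianglelefteq G$. As $H$ is a $\pi'$-Hall subgroup of $HF$, the profinite Frattini argument gives $G=HF\,N_G(H)=FK$. Writing $\bar X$ for the image of $X$ in $G/F$, we then have $\bar K=G/F$. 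I next record three facts: $H\trianglelefteq K$ is nilpotent with Sylow subgroups cyclic or (generalized) quaternion (they embed into those of $G/F$), so $Z(H)$ is a nontrivial cyclic $\pi'$-group; since $H$ and $K\cap F$ are normal in $K$ with trivial intersection, $[H,K\cap F]=1$, whence $K\cap F\leq C_K(H)$ and $K/C_K(H)$ is a section of $G/F$, in particular finite; and for any $1\neq z\in Z(H)$ one has $C_K(H)\leq C_G(z)$, which is pronilpotent, so $C_K(H)$ is a normal pronilpotent subgroup of $K$ and $C_K(H)\leq F(K)$.

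For cyclicity I would analyse the action of $K$ on $Z(H)$. The crucial point is that it is fixed-point-free modulo $C_K(H)$: if $t\in K$ fixes some $1\neq z\in Z(H)$, then $t$ and $H$ both lie in the pronilpotent group $C_G(z)$ (here $H\leq C_G(z)$ since $z\in Z(H)$), so $[t,H]=1$ and $t\in C_K(H)$. Hence $C_K(Z(H))=C_K(H)$, and $K/C_K(H)$ embeds in the abelian group $\mathrm{Aut}(Z(H))$ and acts fixed-point-freely on the cyclic group $Z(H)$; by Lemma~\ref{rankexpo} it has no noncyclic subgroup of order $r^2$, so it is cyclic, and therefore so is its quotient $K/F(K)$. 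The same fixed-point-freeness shows that no prime of $\pi(H)$ divides $|K/C_K(H)|$ (an element of order $q\in\pi(H)$ would centralize the nontrivial $q$-part of $Z(H)$), so this cyclic group is a $\pi(H)'$-group.

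Assuming $K$ is not pronilpotent, so $C_K(H)\neq K$, I would prove $K$ is Frobenius with kernel $C_K(H)$ by verifying $C_K(w)\leq C_K(H)$ for every $1\neq w\in C_K(H)$. Given $g\in C_K(w)$, decompose $g=g_0g_1$ into its $\pi(H)$- and $\pi(H)'$-parts. Because $K/C_K(H)$ is a $\pi(H)'$-group, $g_0\in C_K(H)$, so $[g_0,H]=1$; and $g_1$ has order coprime to $H$ while both lie in the pronilpotent group $C_G(w)$ (note $H\leq C_G(w)$ as $w\in C_K(H)$), so $[g_1,H]=1$ as well. Thus $g\in C_K(H)$, $K$ is Frobenius, its kernel equals $F(K)$ (being the Fitting subgroup of a Frobenius group), and the complement is the cyclic group $K/F(K)$ found above.

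The hard part will be transferring the Frobenius property to $G/F$: because $K\cap F$ may be an infinite pro-$p$ group, a fixed-point-free action on $F(K)$ need not stay fixed-point-free modulo $K\cap F$ (already for $F(K)\cong\mathbb{Z}_p$ and a coprime automorphism $\equiv 1 \bmod p$), so the quotient of a Frobenius group need not be Frobenius here. I expect to resolve this in two steps. First, $\pi$ must be a single prime: otherwise the fixed-point-free action of the nontrivial cyclic group $K/C_K(H)$ on $Z(H)$ yields, via a suitable $\pi(H)'$-element $c\in K$ of prime order, a nontrivial Frobenius subgroup $Z(H)\rtimes\langle c\rangle$ of $G$ whose image in $G/F$ is again Frobenius, contradicting Lemma~\ref{hap3}; hence $\pi=\{p\}$ and $F$ is pro-$p$. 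Second, since $F=O_p(G)$ is the maximal normal pro-$p$ subgroup we have $O_p(G/F)=1$, so $F(G/F)=F_2(G)/F$ is a $\pi'$-group; consequently the $\pi$-part of the kernel $C_K(H)$ equals $K\cap F$, giving $C_K(H)=(K\cap F)\times N_0$ with $N_0$ the $\pi(H)$-Hall subgroup of $C_K(H)$. The complement then acts fixed-point-freely on $N_0\cong C_K(H)/(K\cap F)$, so $G/F=\bigl(C_K(H)/(K\cap F)\bigr)\rtimes\bigl(K/F(K)\bigr)$ is Frobenius with cyclic complement. The genuinely delicate point, and the one I would expect to require the most care, is precisely this reduction that confines the infinite pro-$p$ behaviour to $K\cap F$ so that fixed-point-freeness descends.
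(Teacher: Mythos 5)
Your argument breaks at its first structural claim, the identity $C_K(Z(H))=C_K(H)$. You justify it by saying that if $t$ centralizes some $1\neq z\in Z(H)$ then $t$ and $H$ both lie in the pronilpotent group $C_G(z)$ and therefore $[t,H]=1$. That inference is false: pronilpotency forces two elements to commute only when their orders are coprime, and nothing prevents $t$ from being a $q$-element, $q\in\pi(H)$, acting nontrivially on the Sylow $q$-subgroup of $H$ inside a common pro-$q$ group. Indeed $C_K(Z(H))=C_K(H)$ fails whenever $H$ is non-abelian, i.e.\ whenever $H$ has a generalized quaternion Sylow $2$-subgroup (which the hypotheses allow): any $t\in H\setminus Z(H)$ centralizes $Z(H)$ but not $H$. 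This is not a cosmetic slip, because the rest of your proof rests on it. Take $G=VK$ with $K$ a direct product of a cyclic group of odd order and $Q_8$ acting fixed-point-freely on $V$, as in the paper's first example in Section 5.3; there $H=K=N_G(H)$ and $K/C_K(H)=K/Z(K)$ contains a Klein four-group, so your claims that $K/C_K(H)$ is cyclic, that it embeds in $\operatorname{Aut}(Z(H))$, that it is a $\pi(H)'$-group, and that $K$ is Frobenius with kernel $C_K(H)$ are all false as stated; the verification $C_K(w)\leq C_K(H)$ already fails for $w\in Z(H)^{\#}$ because $H\leq C_K(w)$.

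The correct invariant is $F(K)$, not $C_K(H)$. The paper exploits the fact that every nontrivial subgroup of $H$ contains a nontrivial subgroup that is \emph{characteristic in $H$} (the unique subgroup of order $q$ of a cyclic or quaternion Sylow $q$-subgroup), so any $y\in K$ with $C_H(y)\neq 1$ lies in a pronilpotent normal subgroup $C_K(N_1)\leq F(K)$; cyclicity of $K/F(K)$ then follows by embedding $K/C_K(H_1)$ into $\operatorname{Aut}(H_1)$ for $H_1$ characteristic of prime order, and the Frobenius kernel is $F(K)$. Some of your later steps would survive this replacement — the coprimality of $|K/F(K)|$ with $|H|$ via fixed points in $Z(H)$, and your transfer to $G/F$ by first invoking Lemma~\ref{hap3} to force $F$ to be pro-$p$, which is a genuinely different route from the paper's direct verification that $[y,b]\in F$ implies $y\in F$ — but as written the proof establishes false intermediate statements. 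You also never produce a Frobenius complement (a finite-order preimage of a generator of the cyclic quotient, obtained as in the paper from the fixed-point-free action on the finite group $H$), which is required by the definition of Frobenius group used here.
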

\begin{proof} By the Frattini argument, $G=FK$. Note that $H$ is a finite nilpotent group whose Sylow subgroups are either cyclic or generalized quaternion. Therefore any subgroup of $H$ contains a normal subgroup. Let $L=F(K)$. If $y\in K$ and $C_H(y)\neq1$, it follows that $y$ centralizes a nontrivial normal subgroup of $H$. In that case, since $G$ is CN, $y\in L$. Let $H_1$ be a characteristic subgroup of prime order in $H$. Since $K/C_K(H_1)$ embeds in the group of automorphisms of $H_1$ and since $C_K(H_1)\leq L$, we conclude that $K/L$ is cyclic.

Assume that $K$ is not pronilpotent. Choose $a\in K$ such that $K=L\langle a \rangle$. Let $b\in\langle a\rangle$. Suppose that $C_{L}(b)\neq1$ and, for a prime $p$, choose a $p$-element $x\in C_{L}(b)$. We already know that $C_H(b)=1$ and so $p\not\in\pi(H)$. Because $L$ is pronilpotent, it follows that $H\leq C_G(x)$. Hence $\langle b,H\rangle\leq C_G(x)$ and so $\langle b,H\rangle$ is nilpotent. This is a contradiction as $C_H(b)=1$. Therefore $C_{L}(b)=1$. Note that, since nontrivial powers of $a$ act on the finite subgroup $H$ without fixed points, $a$ has finite order. Hence, $K$ is a Frobenius group with kernel $L$ and finite cyclic complement $\langle a\rangle$.

It remains to prove that the image of $K$ in $G/F$ is a Frobenius group. It is sufficient to show that if $1\neq b\in\langle a\rangle$ and $y\in L$ such that $[y,b]\in F$, then $y\in F$. Since $a$ has finite order, without loss of generality it can be assumed that $b$ has prime order $q$ and $y$ is a $p$-element for some prime $p$. Assume that $y\not\in F$. Then $\langle y\rangle\cap F=1$. Indeed, suppose that $y^i\in F$. If $F$ is pro-$p$, then $y\in F$, a contradiction. Therefore $F$ is not pro-$p$ and $\langle y\rangle\cap F=1$ by Lemma \ref{ooo}. So without loss of generality we can assume that $y$ has prime order $p$. 

Suppose first that $p=q$. By Lemma \ref{golova} the Sylow $p$-subgroup of $G/F$ is either cyclic or generalized quaternion. Since $b$ and $y$ commute modulo $F$ and both have order $p$, we conclude that $b\in F\langle y\rangle\leq L$. This is a contradiction and therefore $p\neq q$.

Observe that $y$ normalizes $F\langle b\rangle$. Let $Q$ be the Sylow $q$-subgroup of $F\langle b\rangle$ containing $b$. In view of the Frattini argument there exists $f\in F$ such that $fy$ normalizes $Q$. The Sylow $p$-subgroup of $\langle fy\rangle$ is contained in $L$ and not in $F$. Without loss of generality we can assume that $y$ normalizes $Q$. Since $[Q,y]\leq F$, we deduce that $Q=Q_1C_Q(y)$, where $Q_1=Q\cap F$. If $Q_1=1$ then, contrary to our assumptions,  $b$ centralizes $y$. Thus, $Q_1\neq1$ and so $q\in\pi$. Suppose that also $p\in\pi$. Then $y$ centralizes $H$ and so $\langle C_Q(y),H\rangle\leq C_G(y)$, whence $[C_Q(y),H]=1$. Since $b\in Q_1C_Q(y)$, we obtain a contradiction. Therefore $p\not\in\pi$ and so $y\in H$. We note that the subgroup $H\langle b\rangle$ is a finite Frobenius group  having trivial intersection with $F$. It follows that the image of $H\langle b\rangle$ in $G/F$ is isomorphic to $H\langle b\rangle$ and therefore $[y,b]\not\in F$. This is the final contradiction. 
\end{proof}

\begin{lemma}\label{hap44} Suppose that each of the sets $\pi$ and $\pi(G/F)$ contains at least two different primes. Then $F(G)$ is nilpotent.
\end{lemma}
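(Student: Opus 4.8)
The plan is to show that every Sylow subgroup $O_p=O_p(G)$ of $F$ is nilpotent of class bounded by a constant depending only on the finite set $\pi(G/F)$, and then to assemble these bounds. Since $F$ is pronilpotent with $\pi(F)=\pi$, it is the Cartesian product $F=\prod_{p\in\pi}O_p$ of its Sylow subgroups, and these factors commute elementwise; consequently, if each $O_p$ is nilpotent of class at most $c$, then $F$ is nilpotent of class at most $c$. The bound $c$ will be produced by Thompson--Higman (Theorem \ref{higm}): for each $p$ I intend to exhibit a fixed-point-free automorphism of prime order $r\in\pi(G/F)$ acting on $O_p$, which forces $O_p$ (a pro-$r'$ group) to be nilpotent of class at most $h(r)$. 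As $G/F$ is finite, $\pi(G/F)$ is finite, so $c:=\max_{r\in\pi(G/F)}h(r)$ is finite and serves as the uniform bound.

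The driving observation is that $C_G(O_p)$ is a normal pronilpotent subgroup of $G$: it is normal because $O_p\trianglelefteq G$, and it is pronilpotent because it is a closed subgroup of $C_G(x)$ for any fixed nontrivial $x\in O_p$, which is pronilpotent since $G$ is a CN-group. By maximality of $F$ we get $C_G(O_p)\leq F$, so no element outside $F$ can centralize $O_p$. Fix $p\in\pi$ and a prime $r\in\pi(G/F)$ with $r\neq p$ (available because $|\pi(G/F)|\geq 2$); I will choose an element $a\in G\setminus F$ of order exactly $r$. Such an $a$ exists: if $r\notin\pi$ a Sylow $r$-subgroup of $G$ meets $F$ trivially and maps isomorphically onto a nontrivial Sylow $r$-subgroup of $G/F$, while if $r\in\pi$ one uses Lemma \ref{ooo} to pass from an $r$-element outside $O_r$ to one of order $r$ meeting $F$ trivially. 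In either case $a\notin F$, so $[O_p,a]\neq 1$.

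It remains to upgrade this nontrivial action to a fixed-point-free one, and here I expect the genuine work to lie. I would split into two cases. If $p\in\pi(G/F)$, then $O_p$ is locally nilpotent by Lemma \ref{hah1}, and Lemma \ref{hah2} provides the dichotomy that a $p'$-element either centralizes $O_p$ or acts without nontrivial fixed points; since $a$ does not centralize $O_p$, it acts fixed-point-freely. If $p\notin\pi(G/F)$, then $|\pi(G/F)\setminus\{p\}|=|\pi(G/F)|\geq 2$, so I may choose $r$ with $\pi\not\subseteq\{p,r\}$ (otherwise two distinct primes of $\pi(G/F)$ would each, together with $p$, contain $\pi$, forcing $\pi\subseteq\{p\}$). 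Assuming $C_{O_p}(a)\neq 1$, pick $1\neq x\in C_{O_p}(a)$; since $\prod_{t\neq p}O_t\leq C_G(x)$ and $a\in C_G(x)$, the pronilpotence of $C_G(x)$ forces $a$ to commute with $O_t$ for every $t\in\pi\setminus\{p,r\}$. Choosing $t_0\in\pi\setminus\{p,r\}$ and $1\neq y\in O_{t_0}$, both $a$ and $O_p$ lie in the pronilpotent group $C_G(y)$, whence $a$ centralizes $O_p$ --- contradicting $C_G(O_p)\leq F$. Thus $C_{O_p}(a)=1$ in this case as well.

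With a fixed-point-free automorphism $a$ of prime order $r$ in hand, Theorem \ref{higm} bounds the nilpotency class of $O_p$ by $h(r)\leq c$, uniformly in $p$; therefore $F=\prod_{p\in\pi}O_p$ is nilpotent of class at most $c$, as required. The main obstacle is precisely the fixed-point-free upgrade: one must ensure that, for the primes $r\in\pi(G/F)$ that are actually available, the nontrivial action forced by $C_G(O_p)\leq F$ can always be promoted to a fixed-point-free action. This is why the argument bifurcates according to whether $p$ lies in $\pi(G/F)$ (where local nilpotency from Lemma \ref{hah1} does the job) or not (where one bridges through a third Sylow subgroup), and it is here that both hypotheses $|\pi|\geq 2$ and $|\pi(G/F)|\geq 2$ are genuinely used.
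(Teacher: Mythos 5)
Your proof is correct, and it takes a route that is organized quite differently from the paper's, even though both ultimately rest on Theorem \ref{higm}. The paper argues in two stages: it first shows \emph{by contradiction} that each Sylow subgroup $P$ of $F$ is nilpotent (if not, every $p'$-element of prime order outside $F$ would have nontrivial fixed points on $P$, and a clever use of Lemmas \ref{hah1} and \ref{hah2} plus the CN-property forces every such element to have order $q$ for \emph{every} prime $q\in\pi\setminus\{p\}$, which is impossible once $\pi(G/F)$ contains two primes); since this yields no bound on the nilpotency classes, the paper must then treat infinite $\pi$ separately, producing a fixed-point-free automorphism of prime order on the entire Hall $q'$-subgroup of $F$. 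You instead work Sylow-by-Sylow and obtain the class bound directly: for each $p\in\pi$ you exhibit a fixed-point-free automorphism of $O_p(G)$ of prime order $r\in\pi(G/F)$ --- via Lemmas \ref{hah1} and \ref{hah2} when $p\in\pi(G/F)$, and via the ``bridge through a third Sylow subgroup'' argument (choosing $r$ so that $\pi\not\subseteq\{p,r\}$, which your counting argument correctly justifies) when $p\notin\pi(G/F)$ --- so that Theorem \ref{higm} bounds the class of $O_p(G)$ by $h(r)$ with $r$ ranging over the finite set $\pi(G/F)$, and the Cartesian product decomposition of the pronilpotent group $F$ finishes the proof. The gain of your organization is that it eliminates the finite/infinite dichotomy for $\pi$ and makes the uniformity of the class bound explicit from the start; the supporting details (existence of elements of order exactly $r$ outside $F$ via Lemma \ref{ooo}, the inclusion $C_G(O_p(G))\leq F$, and the coprime-order commutation inside pronilpotent centralizers) are all handled correctly.
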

\begin{proof} Let $P$ be a Sylow $p$-subgroup of $F$ for some prime $p$ and suppose that $P$ is not nilpotent. It follows from Theorem \ref{higm} that if $a$ is a $p'$-element of prime order in $G\setminus F$, then $C_P(a)\neq1$. Therefore, in view of Lemma \ref{hah2}, we deduce that  $P$ is not locally nilpotent. Let $Q$ be a Sylow $q$-subgroup of $F$ for some prime $q\neq p$. Both $a$ and $Q$ are contained in the centralizer of $C_P(a)$. Since $a$ does not centralize $Q$, we conclude that $a$ is of order $q$. Taking into account that $\pi(G/F)$ contains at least two different primes, we can choose a $q'$-element $b$ of prime order. Note that, by virtue of Lemma \ref{hah1}, $b$ cannot be of order $p$ because $P$ is not locally nilpotent. Repeating the above argument with $a$ replaced by $b$, we deduce that $b$ is of order $q$, contrary to the choice of $b$. Thus, we have shown that each Sylow subgroup of $F$ is nilpotent. 

Now, in the case where $\pi$ is finite the result is immediate. So assume that $\pi$ is infinite. As above, we assume that $a$ is an element of prime order $q$ in $G\setminus F$. Let $S$ be the $q'$-Hall subgroup of $F$. If $C_S(a)=1$, then, by Theorem \ref{higm}, $S$ is nilpotent and the lemma follows. Hence, we assume that $C_S(a)\neq1$. Choose a prime $r\in\pi(C_S(a))$ and a nontrivial $r$-element $x$ in $C_S(a)$. Let $S_1$ be the Hall $r'$-subgroup in $S$. Note $S_1$ is normal in $G$ and is nontrivial since $\pi$ is infinite. Further, we observe that both $a$ and $S_1$ are contained in the pronilpotent subgroup $C_G(x)$. So we deduce that $a$ centralizes $S_1$ and therefore $a\in F$. This contradiction shows that $C_S(a)=1$. The proof is complete.
\end{proof}
\section{The structure of profinite CN-groups}

We are now ready to finalize the proof of Theorem \ref{formida}. This section is divided in three parts. In the first one we handle prosoluble CN-groups (see Theorem \ref{mainso}). Subsection 5.2 deals with the non-prosoluble case of Theorem \ref{formida} (see Theorem \ref{222}). In the final subsection we give examples of profinite CN-groups showing that indeed no alternative mentioned in Theorem \ref{formida} can be omitted. It is easy to see that the combination of Theorems \ref{main}, \ref{mainso}, and \ref{222} is precisely Theorem \ref{formida}.
\subsection{On prosoluble CN-groups}

\begin{theorem}\label{mainso} Let $G$ be a prosoluble CN-group, and let $F$ be the maximal normal pronilpotent subgroup of $G$. Then one of the following holds.
\begin{enumerate}
\item $G/F$ is cyclic.
\item $G/F$ is a direct product of a cyclic group of odd order and a generalized quaternion group.
\item $G/F$ is Frobenius with cyclic kernel of odd order and cyclic complement. In this case $F$ is pro-$p$ for some prime $p$.
\item $G/F(G)\cong SL(2,3)$. In this case $F(G)$ is nilpotent and $\pi(F(G))$ has at least two primes one of which is $2$. 
\end{enumerate}
\end{theorem}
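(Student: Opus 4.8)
The plan is to pass, via Theorem~\ref{main}, to the finite quotient $\bar G:=G/F$, whose Sylow subgroups are cyclic or generalized quaternion by Lemma~\ref{golova}, and then to organize the argument according to whether $\bar G$ is nilpotent and according to the size of $\pi=\pi(F)$. Note first that $F\neq1$, since a nontrivial prosoluble group has nontrivial Fitting subgroup and in the relevant cases $G$ is non-nilpotent, hence nontrivial. If $\bar G$ is nilpotent, then it is the direct product of its Sylow subgroups; the odd ones are cyclic, so their product is cyclic of odd order, while the Sylow $2$-subgroup is cyclic or generalized quaternion. This yields alternative~(1) or~(2). (When $F$ is pro-$p$ one uses $O_p(\bar G)=O_p(G/O_p(G))=1$ to see that $p\notin\pi(\bar G)$, so that all Sylow subgroups of $\bar G$ really are governed by Lemma~\ref{golova}.)

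Assume henceforth that $\bar G$ is not nilpotent, and suppose first that $|\pi|\geq2$. I would begin by excluding the possibility $\pi\cap\pi(\bar G)=\emptyset$. In that situation $F(\bar G)=F_2(G)/F$ is a nontrivial $\pi'$-group, so $\pi(F_2(G))\not\subset\pi$ and Lemma~\ref{op1} applies to $K=N_G(H)$, where $H$ is a $\pi'$-Hall subgroup of $F_2(G)$ and $G=FK$. Since $\bar G$ is not nilpotent, $K$ is not pronilpotent, and Lemma~\ref{op1} then forces the image of $K$ in $G/F$ — that is, $\bar G$ itself — to be a Frobenius group, contradicting Lemma~\ref{hap3}. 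Hence $\pi\cap\pi(\bar G)\neq\emptyset$, and Lemma~\ref{hah4} gives $\bar G\cong SL(2,3)$; Lemma~\ref{hap44} then shows that $F$ is nilpotent, as both $\pi$ and $\pi(\bar G)=\{2,3\}$ contain at least two primes.

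To complete alternative~(4) I must show $2\in\pi$. Arguing by contradiction, if $O_2(G)=1$ then $F$ is pro-$2'$, so a Sylow $2$-subgroup $Q$ of $G$ is finite and maps isomorphically onto the normal subgroup $O_2(\bar G)\cong Q_8$. Lifting an order-$3$ element of $\bar G$ to a $3$-element $a$ of $G$ and, after conjugation via the Frattini argument, arranging that $a$ normalizes $Q$ and acts nontrivially on it, one obtains a finite non-nilpotent subgroup $\langle Q,a\rangle\cong SL(2,3)$. Its unique involution $w$ is fixed by $a$ and central in $Q$, hence central in $\langle Q,a\rangle$, so $C_G(w)$ contains a non-nilpotent finite subgroup — impossible in a CN-group. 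Thus $2\in\pi$ and alternative~(4) holds.

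It remains to treat the case where $\bar G$ is not nilpotent and $F$ is pro-$p$, i.e.\ $|\pi|=1$. The key point is again $O_p(\bar G)=O_p(G/O_p(G))=1$, so that $F(\bar G)$ is a nontrivial $p'$-group and $\pi(F_2(G))\not\subset\pi$. Lemma~\ref{op1} applies, $K=N_G(H)$ is not pronilpotent (otherwise $\bar G$ would be nilpotent), and so $\bar G$ is a Frobenius group whose complement, being the image of the cyclic complement of $K$, is cyclic. It remains only to identify the kernel $\bar L$: it is nilpotent with cyclic or generalized quaternion Sylow subgroups, and the complement acts on it fixed-point-freely. Any involution of $\bar L$ would be the unique involution of its characteristic Sylow $2$-subgroup and therefore fixed by the complement, which is impossible; hence $\bar L$ has odd order, all its Sylow subgroups are cyclic, and $\bar L$ is cyclic. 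This is alternative~(3), and $F$ is pro-$p$ by hypothesis. I expect the delicate points to be the verifications that Lemma~\ref{op1} is genuinely applicable (through the identity $O_p(\bar G)=1$) and the fixed-point-free analysis pinning down the Frobenius kernel; the bulk of the structural work is already encapsulated in Lemmas~\ref{op1}, \ref{hah4}, \ref{hap3}, and~\ref{hap44}.
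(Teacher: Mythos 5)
Your proof is correct and follows essentially the same route as the paper: the same case division by nilpotency of $G/F$ and by the size of $\pi(F)$, with the Frattini argument and Lemmas \ref{op1}, \ref{hah4}, \ref{hap3} and \ref{hap44} doing the same work in the same places. The only departures are cosmetic: you establish $2\in\pi(F)$ via the unique involution of a finite quaternion Sylow $2$-subgroup and a lifted $3$-element rather than via $K=N_G(S)\leq C_G(a)$, and you spell out the verification that in case (3) the Frobenius kernel is cyclic of odd order and the complement cyclic, details the paper leaves implicit after invoking Lemma \ref{op1}.
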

\begin{proof} We know that $G/F$ is finite with Sylow subgroups either cyclic or generalized quaternion. Thus, if $G/F$ is nilpotent, then it is either cyclic or direct product of a cyclic group of odd order and a generalized quaternion group. Suppose that $G/F$ is not nilpotent.

Consider first the case where $\pi(F)$ has at least two primes and $\pi(F)\cap\pi(G/F)$ is non-empty. Then $G/F\cong SL(2,3)$ by Lemma \ref{hah4}. Moreover, by Lemma \ref{hap44}, $F$ is nilpotent. Suppose that $2\not\in\pi(F)$. Let $S$ be a Sylow 2-subgroup of $G$. Observe that $S$ is quaternion and let $a$ be the unique involution in $S$. By the Frattini argument $G=FK$, where $K=N_G(S)$. Note that $K$ is not pronilpotent since the image of $K$ in $G/F$ is isomorphic to $SL(2,3)$. On the other hand, $K\leq C_G(a)$ and so $K$ must be pronilpotent. This is a contradiction. Hence, in the case where $\pi(F)$ has at least two primes and $\pi(F)\cap\pi(G/F)$ is non-empty we have $G/F\cong SL(2,3)$ and $F$ is nilpotent with $2\in\pi(F)$.

Now suppose that $\pi(F)$ has at least two primes and $\pi(F)\cap\pi(G/F)=\emptyset$. By Lemma \ref{op1} $G/F$ is a Frobenius group. However this contradicts Lemma \ref{hap3}. Thus, the case where $\pi(F)$ has at least two primes and $\pi(F)\cap\pi(G/F)=\emptyset$ does not occur.

It remains to handle the case where $F$ is pro-$p$ for some prime $p$. Certainly, $F_2(G)$ is not pro-$p$ and let $H$ be a Hall pro-$p'$ subgroup of $F_2(G)$. By the Frattini argument $G=FN_G(H)$. Since $G/F$ is not nilpotent, it follows that $N_G(H)$ is not pronilpotent. By Lemma \ref{op1}, $G/F$ is a Frobenius group. The proof is now complete. 
\end{proof}

\subsection{On non-prosoluble CN-groups}

\begin{lemma}\label{frobe} Every finite non-soluble group has a dihedral subgroup which is a Frobenius group.
\end{lemma}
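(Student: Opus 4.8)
The plan is to translate the conclusion into a statement about involutions. A dihedral group is a Frobenius group exactly when it is generated by two involutions $x,y$ whose product $xy$ has odd order greater than $1$; writing $g=xy$, this is the same as saying that $g$ is a nontrivial element of odd order inverted by the involution $x$, and in that case $\langle x,g\rangle$ is the desired Frobenius dihedral group. So I would aim to locate, inside the finite non-soluble group $G$, an involution that inverts a nontrivial element of odd order.

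My main tool would be the Baer--Suzuki theorem used in tandem with the Feit--Thompson theorem. Since $G$ is non-soluble, Feit--Thompson forces $|G|$ to be even, so $G$ contains involutions. Suppose for contradiction that $G$ has no Frobenius dihedral subgroup. Then for any two involutions $x,y$ the product $xy$ cannot have odd order exceeding $1$; moreover, if $xy$ had order $2^{a}b$ with $b>1$ odd, then $(xy)^{2^{a}}$ would be a nontrivial odd-order element, and since $x$ inverts $xy$ it inverts every power of $xy$, so $\langle x,(xy)^{2^{a}}\rangle$ would be a Frobenius dihedral subgroup. Hence $\langle x,y\rangle$ is a $2$-group for every pair of involutions. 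In particular $\langle x,x^{g}\rangle$ is a $2$-group for each involution $x$ and each $g\in G$, so the Baer--Suzuki theorem yields $x\in O_{2}(G)$. Thus in a putative counterexample every involution of $G$ lies in $O_{2}(G)$.

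This already closes the argument whenever $O_{2}(G)=1$: in that case $G$ would have no involution at all, contradicting that $|G|$ is even. In particular the non-abelian simple case is immediate, and the same applies to any almost simple group, whose socle is its unique minimal normal subgroup and so leaves the $2$-core trivial; this is exactly the situation in which the lemma is needed here. The main obstacle is therefore to push past a nontrivial normal $2$-subgroup. One cannot simply pass to $G/O_{2}(G)$, since a Frobenius dihedral \emph{section} need not persist as a \emph{subgroup} once the $2$-core is nontrivial --- the non-soluble group $SL(2,5)$, whose Sylow $2$-subgroup is quaternion and which possesses a single involution, shows how delicate this point is. I would handle it by reducing to a non-abelian simple section realized inside $G$: replace $G$ by a minimal non-soluble subgroup, so that its soluble radical $R$ has $G/R$ a minimal simple group, and then exhibit the Frobenius dihedral subgroup concretely inside the normalizer of a Sylow or maximal-torus subgroup for each family in Thompson's classification, rather than attempting a formal lift through $R$.
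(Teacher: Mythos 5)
Your central mechanism coincides with the paper's: Feit--Thompson guarantees involutions, and Baer--Suzuki produces an involution $a$ and an element $g$ with $a\cdot a^{g}=[a,g]$ of nontrivial odd order, whence $\langle a,[a,g]\rangle$ is a dihedral Frobenius group. Up to the conclusion that in a counterexample every involution lies in $O_{2}(G)$ --- and hence that the lemma holds whenever some involution lies outside $O_{2}(G)$, in particular whenever $O_{2}(G)=1$ --- your argument is correct and is essentially the paper's three-line proof written out contrapositively, with the details of the Baer--Suzuki reduction made explicit.

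The remaining case, however, cannot be closed, because the lemma is false there: $SL(2,5)$, which you cite only as a warning about lifting sections to subgroups, is an outright counterexample. It is non-soluble, its unique involution is the central matrix $-I$, and a dihedral Frobenius group of order $2n$ (necessarily with $n\geq 3$ odd) contains $n\geq 3$ involutions; so $SL(2,5)$ contains no dihedral subgroup of order greater than $4$ at all. Your proposed repair via a minimal non-soluble subgroup and Thompson's classification therefore has nothing to exhibit when $G=SL(2,5)$, since that group is its own minimal non-soluble subgroup. The paper's proof breaks at exactly the point you isolated: its opening assertion that some involution lies outside $F(G)$ fails for $SL(2,5)$, whose only involution generates $F(G)=Z(G)$. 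Your further remark that the case $O_{2}(G)=1$ is ``exactly the situation in which the lemma is needed here'' is also inaccurate: in the proof of Theorem~\ref{odd} the lemma is invoked for $G/F(G)$, which at that stage is known only to be finite, non-soluble, and to have generalized quaternion Sylow $2$-subgroups --- a group with a unique involution in each Sylow $2$-subgroup, i.e.\ precisely the problematic case. What that application actually needs is only that the group contain \emph{some} nontrivial Frobenius subgroup (which $SL(2,5)$ does, e.g.\ the normalizer $C_{5}\rtimes C_{4}$ of a Sylow $5$-subgroup); the lemma should be weakened to that statement, and the case in which every involution lies in $O_{2}(G)$ then requires an argument genuinely different from Baer--Suzuki.
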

\begin{proof} Let $G$ be a finite non-soluble group. There is an involution $a\in G$ such that $a\not\in F(G)$. By the Baer-Suzuki theorem \cite[Theorem 3.8.2]{go} the commutator $[x,a]$ has odd order for some $x\in G$. The subgroup $\langle a,[x,a]\rangle$ is a dihedral subgroup which is a Frobenius group.
\end{proof}

Suzuki proved in \cite{suzu61} that a finite CN-group having a normal subgroup of odd order is soluble. Our next theorem provides a profinite analog of Suzuki's result.
\begin{theorem}\label{odd} A profinite CN-group containing a nontrivial normal pro-$2'$ subgroup is prosoluble.
\end{theorem}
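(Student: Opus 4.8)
The plan is to argue by contradiction: suppose $G$ is a profinite CN-group possessing a nontrivial normal pro-$2'$ subgroup (which we may take to be $N=O_{2'}(G)$) but $G$ is not prosoluble. Since $F=F(G)$ is open (Theorem \ref{main}) and pronilpotent, non-prosolubility of $G$ is equivalent to non-solubility of the finite group $\bar G:=G/F$. Being pro-$2'$, $N$ is prosoluble by Feit--Thompson, so $F(N)\neq1$ and hence $O_p(G)\neq1$ for some odd prime $p$; in particular $p\in\pi(F)$. The remark following Theorem \ref{main} (a consequence of Lemma \ref{golova}) then forces the Sylow $2$-subgroup of $\bar G$ to be cyclic or generalized quaternion. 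If it were cyclic, Burnside's normal $2$-complement theorem would give a normal $2$-complement $K\trianglelefteq\bar G$; as $K$ has odd order it is soluble by Feit--Thompson and $\bar G/K$ is a $2$-group, so $\bar G$ would be soluble, a contradiction. Hence the Sylow $2$-subgroup of $\bar G$ is generalized quaternion, so it has a unique involution and all involutions of $\bar G$ are conjugate.

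Next I would extract two involutions using Lemma \ref{frobe}: applied to the non-soluble $\bar G$ it yields a dihedral Frobenius subgroup $\langle\bar a,\bar c\rangle$ with $\bar a$ an involution and $\bar c=\bar a\bar a'$ of odd order $m\geq3$, where $\bar a'=\bar a^{\bar x}$ for some $\bar x$. Lift $\bar a,\bar x$ to $G$ and set $a'=a^x$; then $a,a'$ are $2$-elements with $\overline{aa'}=\bar c$. Here Lemma \ref{tec} controls torsion: if $\langle a\rangle$ were infinite, then $\langle a\rangle\cong\mathbb Z_2$ would be an infinite abelian pro-$2$ group normalizing the pro-$2'$ group $N$, so $[N,a]=1$; as $C_G(N)$ is a normal pronilpotent subgroup it lies in $F$, giving $\bar a=1$, a contradiction. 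So $a$ (and likewise $a'$) has finite order, and since $a^2\in F$ is a $2$-element we have $a^2\in O_2(G)$. Because $[O_2(G),N]\leq O_2(G)\cap N=1$, the element $a$ acts on $N$ (coprimely) as an \emph{involutory} automorphism $\alpha$; if $\alpha=1$ then $a\in C_G(N)\leq F$ and $\bar a=1$, impossible, so $\alpha\neq1$.

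The contradiction I aim for is a common nontrivial fixed point. Indeed, if $1\neq z\in C_N(a)\cap C_N(a')$, then $a,a'\in C_G(z)$, which is pronilpotent; being generated by $2$-elements, $\langle a,a'\rangle$ is then pro-$2$, so $aa'$ is a $2$-element and $\bar c=\overline{aa'}$ has $2$-power order, contradicting $|\bar c|=m\geq3$ odd. The base case is $C_N(a)=1$: then $\alpha$ is a fixed-point-free involutory automorphism of the odd group $N$, so $N$ is abelian and $\alpha$ inverts $N$; the automorphism $\alpha'$ induced by $a'=a^x$ also inverts the abelian normal $N$, whence $aa'$ centralizes $N$, i.e. $aa'\in C_G(N)\leq F$ and $\bar c=1$ --- again impossible. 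So $C_N(a)\neq1$, and symmetrically $C_N(a')\neq1$.

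The main obstacle is the remaining nonabelian case, where $C_N(a)$ and $C_N(a')$ are both nontrivial and one must show that they intersect nontrivially. The key structural input from the CN-hypothesis is a \emph{centralizer-closure} property: for every $1\neq u\in C_N(a)$ the centralizer $C_G(u)$ is pronilpotent and contains the $2$-element $a$, so $a$ centralizes the Hall $2'$-part of $C_G(u)$; since $C_N(u)\leq N$ is a $2'$-group this gives $C_N(u)\subseteq C_N(a)$ (and symmetrically for $a'$). To convert this into a genuine common fixed point I would pass to a $G$-chief factor of $N$, which --- $N$ being odd and prosoluble --- is elementary abelian, analyse the induced action of the dihedral group $\langle\bar a,\bar c\rangle$ on it, and use that all involutions of $\bar G$ are conjugate to exclude the fixed-point-free faithful pieces that would otherwise destroy the common fixed point. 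Making this reduction precise, and checking that fixed points lift back from the chief factor to $N$ via the coprimeness of the action (Lemma \ref{cc}(iii)), is the technical heart of the argument; once it is in place the common fixed point yields the final contradiction, and therefore $G$ is prosoluble.
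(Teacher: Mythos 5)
Your opening moves match the paper's in spirit (reduce to the finite non-soluble quotient $G/F$, force the Sylow $2$-subgroup there to be generalized quaternion, and extract a dihedral Frobenius subgroup via Lemma \ref{frobe}), and your treatment of the fixed-point-free case $C_N(a)=1$ is correct. But the decisive step is missing: you never prove that $C_N(a)\cap C_N(a')\neq 1$ when both centralizers are nontrivial, and you say yourself that this is ``the technical heart of the argument.'' This is a genuine gap, not a routine verification. Worse, the route you sketch for closing it is doubtful: a dihedral group can act on an elementary abelian group so that the rotation is fixed-point-free while the two reflections each fix a nontrivial subgroup and those fixed subgroups intersect trivially (e.g.\ the natural $2$-dimensional representation of $D_{2m}$), so no purely module-theoretic analysis of a chief factor can produce a common fixed point. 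The CN-hypothesis has to be injected in an essential, nontrivial way, and your ``centralizer-closure'' observation ($C_N(u)\le C_N(a)$ for $1\neq u\in C_N(a)$) is not by itself enough to force the intersection to be nontrivial; you would also need $N\le F(G)$ (which you have not established for $N=O_{2'}(G)$) and some argument producing a nontrivial element of $N$ commuting with both a fixed point of $a$ and a fixed point of $a'$.

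What you are trying to prove from scratch is essentially a special case of the paper's Lemma \ref{hap3}: if $\pi(F(G))$ contains at least two primes, then $G/F(G)$ has no nontrivial Frobenius subgroups. The paper's proof of Theorem \ref{odd} splits on $U_1=C_U(T)=U\cap F(G)$, where $U$ is a Sylow $2$-subgroup and $T$ the given normal pro-$2'$ subgroup. If $U_1=1$, then $U$ is a finite generalized quaternion group, and the Frobenius normal $p$-complement theorem yields an odd-order element normalizing but not centralizing a $2$-subgroup; since everything in sight centralizes the unique involution $y$ of $U$, the centralizer $C_G(y)$ fails to be pronilpotent --- contradiction. If $U_1\neq 1$, then $T\le F(G)$ and $2\in\pi(F(G))$, so $|\pi(F(G))|\ge 2$ and Lemma \ref{hap3} forbids any Frobenius subgroup of $G/F$, contradicting Lemma \ref{frobe}. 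Note that you never establish $2\in\pi(F)$, so even citing Lemma \ref{hap3} is not available to you as written; and the interaction between $a$, $a'$ and the various Sylow subgroups of $F$ that Lemma \ref{hap3} exploits (via Lemmas \ref{ooo}, \ref{hah1}, \ref{hah2} and \ref{fro}) is exactly the machinery your sketch omits. As it stands the proposal is an incomplete proof.
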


\begin{proof} Suppose that the theorem is false and let $G$ be a counter-example with $|G/F(G)|$ as small as possible (recall that $G/F(G)$ is finite by Theorem \ref{main}). Let $T$ be a nontrivial normal pro-$2'$ subgroup of $G$. By the Feit-Thompson theorem $G$ has a nontrivial Sylow 2-subgroup $U$. Set $U_1=C_U(T)$. Since $G$ is CN, we conclude that $U_1=U\cap F(G)$. By Lemma \ref{golova} $U/U_1$ is either cyclic or (generalized) quaternion. Note that finite groups with cyclic Sylow 2-subgroups are soluble \cite[Theorem 7.6.1]{go}. Taking into account that $G$ is not prosoluble we deduce that $U/U_1$ is (generalized) quaternion. 

Suppose that $U_1=1$, in which case $U$ is a finite (generalized) quaternion group. In view of the Feit-Thompson theorem, $G$ does not possess a normal 2-complement (otherwise $G$ would be prosoluble) and so by the  normal $p$-complement theorem of Frobenius $G$ has an element $x$ of odd order and a subgroup $U_0\leq U$ such that $x\in N_G(U_0)\setminus C_G(U_0)$. Let $y$ be the (unique) involution in $U$. It is straightforward that $\langle x,U_0\rangle\leq C_G(y)$. Therefore $C_G(y)$ is not pronilpotent, a contradiction.

Hence, without loss of generality we can assume that $U_1\not=1$ and so $TU_1\leq F(G)$. In particular, $T$ is a Cartesian product of its Sylow subgroups. Applying Lemma \ref{hap3} we conclude that $G/F(G)$ has no subgroups of Frobenius. Because of Lemma \ref{frobe} this yields a contradiction.
\end{proof}

\begin{theorem}\label{222} Let $G$ be a profinite CN-group which is non-prosoluble. Then $G/O_2(G)$ is almost simple. 
\end{theorem}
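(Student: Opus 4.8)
The plan is to first pin down the Fitting subgroup and reduce everything to a statement about the finite quotient $\bar G = G/F$, where $F=F(G)$. Since $G$ is non-prosoluble, Theorem \ref{odd} forbids a nontrivial normal pro-$2'$ subgroup; in particular the Hall pro-$2'$ part $\prod_{p\neq 2}O_p(G)$ of the pronilpotent group $F$, being characteristic in $F$ and hence normal in $G$, must be trivial. Thus $O_p(G)=1$ for every odd $p$ and $F=O_2(G)$ is a pro-$2$ group, so that $G/O_2(G)=\bar G$. By Theorem \ref{main} the group $\bar G$ is finite, and it is non-soluble (otherwise $G$, an extension of the prosoluble $F$ by a soluble group, would be prosoluble). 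Moreover, as recorded after Theorem \ref{main}, all Sylow $q$-subgroups of $\bar G$ for odd $q$ are cyclic.

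Next I would show that $F(\bar G)=1$. First, $O_2(\bar G)=1$: its preimage in $G$ is a normal pro-$2$ subgroup, hence contained in $O_2(G)=F$. To see $O_{2'}(\bar G)=1$, suppose not; then $O_{2'}(\bar G)$ is a nontrivial normal subgroup of odd order, so it contains a minimal normal subgroup $M$ of $\bar G$, which is elementary abelian of odd order and therefore, since odd Sylow subgroups of $\bar G$ are cyclic, cyclic of prime order $r$. Lifting through the pro-$2$ group $F$ by Schur--Zassenhaus, the preimage $R\trianglelefteq G$ of $M$ satisfies $R=F\rtimes\langle a\rangle$ with $|a|=r$. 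By the Frattini argument $G=F\,N_G(\langle a\rangle)$; set $K=N_G(\langle a\rangle)$. Now $C_K(a)\le C_G(a)$ is pronilpotent while $K/C_K(a)$ embeds in $\mathrm{Aut}(\langle a\rangle)\cong C_{r-1}$ and is therefore cyclic, so $K$ is prosoluble. But then $G=FK$ is an extension of the prosoluble group $F$ by the prosoluble image of $K$, hence prosoluble, a contradiction. Thus $F(\bar G)=1$.

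With $F(\bar G)=1$ the generalized Fitting subgroup coincides with the layer $E(\bar G)$ (the product of the components); the components are simple, since their centres lie in $F(\bar G)=1$; the socle $\mathrm{Soc}(\bar G)=E(\bar G)$ is a direct product $S_1\times\cdots\times S_k$ of non-abelian finite simple groups; and $C_{\bar G}(E(\bar G))=1$. Since $\bar G$ is non-soluble, $k\ge 1$. The heart of the proof is to establish $k=1$. Suppose $k\ge2$ and choose $\bar z\in S_1$ of odd prime order $p$; lift it to $z\in G$ of order $p$. As $S_1$ centralizes $S_2$, the element $\bar z$ centralizes the whole component $S_2$, so $z$ normalizes the preimage $W$ of $S_2$ in $G$ and $\langle z\rangle$ acts on $W$ fixing $W/F=S_2$ pointwise. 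Since $F$ is a pro-$2$ group and $z$ has odd order, a coprime-action argument (the relevant coprimeness being between $|z|$ and the $\langle z\rangle$-invariant pro-$2$ normal subgroup $F$) gives $C_W(z)F/F=C_{W/F}(z)=S_2$. Hence the pronilpotent group $C_G(z)\supseteq C_W(z)$ has a continuous image, namely $S_2$, which is non-nilpotent, contradicting the CN hypothesis. Therefore $k=1$, so $\mathrm{Soc}(\bar G)=S_1=:S$ is the unique minimal normal subgroup, and $S\le\bar G\le\mathrm{Aut}(S)$ because $C_{\bar G}(S)=1$. Thus $\bar G=G/O_2(G)$ is almost simple.

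I expect the uniqueness of the component ($k=1$) to be the main obstacle: the idea is to convert the abundance of commuting elements arising from two distinct components into a genuinely non-pronilpotent centralizer in $G$, which is precisely what the coprime lifting of an entire component $S_2$ into $C_G(z)$ achieves. The one point requiring care is this coprime-action step, where one must use that $F$ is pro-$2$ (so that $|z|$ is prime to the order of the relevant normal section) rather than full coprimeness of $|z|$ with $|W|$; the reduction of the prosoluble radical to $F=O_2(G)$ in the second paragraph, while essential, is more routine given Theorems \ref{main} and \ref{odd} and Lemma \ref{golova}.
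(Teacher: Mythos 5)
Your proof is correct and follows the same overall skeleton as the paper's: reduce to $F=O_2(G)$ via Theorem \ref{odd}, eliminate soluble normal subgroups of $\bar G=G/O_2(G)$, show the socle is a single non-abelian simple group, and conclude almost simplicity from $C_{\bar G}(S)=1$. The tactical choices differ at two points. To rule out abelian normal subgroups the paper applies the Frattini argument to an odd Sylow subgroup $P$ of the preimage of a minimal normal subgroup and invokes Theorem \ref{odd} to see that $N_G(P)$ is prosoluble; you instead work with $N_G(\langle a\rangle)$ for a lift $a$ of a generator of a cyclic minimal normal subgroup and use the CN property directly ($C_K(a)$ pronilpotent, $K/C_K(a)$ cyclic) --- both work. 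To show there is only one simple factor, the paper notes that two factors would force a noncyclic subgroup of order $p^2$ for an odd prime $p$, contradicting Lemma \ref{eleme}; you instead lift an entire second component into the centralizer of an odd-order element of the first. Your argument is valid, but be aware that the coprime-action step you flag really does go beyond the paper's Lemma \ref{cc}(iii): you need $C_{W/F}(z)=C_W(z)F/F$ when $|z|$ is coprime only to the kernel $F$, not to all of $W$. This is a standard consequence of Schur--Zassenhaus conjugacy and holds in the profinite setting, but it is not literally one of the stated lemmas, so it would need to be justified. You could also have avoided it entirely: having already established that Sylow $q$-subgroups of $\bar G$ are cyclic for odd $q$, two components would immediately produce a subgroup $C_p\times C_p$ in $\bar G$ for any odd $p$ dividing $|S_1|$ --- which is essentially the paper's shortcut. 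Finally, invoking $F^*(\bar G)$ and the layer is a little heavier than necessary (the paper simply takes a minimal normal subgroup of $\bar G$), but it is sound.
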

\begin{proof} In view of Theorem~\ref{odd}, $G$ is virtually pro-$2$. Let $M/O_2(G)$ be a minimal normal subgroup of $G/O_2(G)$. For an odd prime $p\in\pi(M)$, let $P$ be a $p$-Sylow subgroup of $M$. By the Frattini argument, $G=MN_G(P)$. Theorem \ref{odd} shows that $N_G(P)$ is prosoluble.
If $M/O_2(G)$ is abelian, then $G$ is prosoluble, contrary to the hypothesis. Thus, $G/O_2(G)$ does not have normal soluble subgroups.

Therefore $M/O_2(G)$ is a direct product of isomorphic non-abelian simple groups. Suppose that $M/O_2(G)$ is not simple. Then, for any  odd prime $p\in\pi(M)$ the $p$-Sylow subgroup of $M$ is not cyclic. In view of Lemma \ref{eleme} this is a contradiction. Hence $M/O_2(G)$ is simple. Taking into account that $G/O_2(G)$ does not have nontrivial normal soluble subgroups and putting this together with the fact that $G=MN_G(P)$, where $N_G(P)$ is prosoluble, we conclude that $M/O_2(G)$ is the unique minimal normal subgroup of $G/O_2(G)$. This means that $G/O_2(G)$ is almost simple.
\end{proof}

\subsection{Examples}

Let $G$ be an infinite profinite CN-group and $F$ the maximal normal pronilpotent subgroup of $G$. Theorem \ref{main} tells us that $G/F$ is finite.
\medskip

1. First, we show by examples that $G/F$ can be cyclic, or a direct product of a cyclic group of odd order and a (generalized) quaternion group.
Let $K$ be a either a finite cyclic group or a direct product of a cyclic group of odd order and a (generalized) quaternion group. Let $p$ be a prime which does not divide $|K|$ and let $V$ be a Cartesian product of infinitely many copies of the cyclic group of order $p$. Since $K$ has the structure of a Frobenius complement, $V$ admits an action of $K$ by automorphisms such that $C_V(x)$ is trivial for any $1\neq x\in K$. Thus, the semidirect product $VK$ has a natural structure of a profinite CN-group with $V$ being the maximal normal pronilpotent subgroup.
\medskip

2. Next, we produce an example where $G/F$ is a Frobenius group with cyclic kernel of odd order and cyclic complement. Our example is similar to so called finite 3-step groups (cf. \cite[p. 401]{go}). Let $D$ be a finite dihedral group of order 2$m$, where $m\geq3$ is odd. Remark that $D$ is a Frobenius group. Let $V$ be a Cartesian product of infinitely many copies of the group of order $2$. Note that $V$ admits an action of $D$ by automorphisms such that $C_V(x)$ is trivial for any nontrivial element of odd order $x\in D$. Thus, the semidirect product $VD$ has a natural structure of a profinite CN-group with $V$ being the maximal normal pronilpotent subgroup.
\medskip

3. Now we deal with an example in which $G/F$ is isomorphic to the group $SL(2,3)$. We will use results obtained in the construction of a similar example in \cite[Section 4]{ppz}. The reader therefore is referred to \cite[Section 4]{ppz} for details.

Let $S=SL(2,3)=\langle a,d\rangle$, where $a$ is of order 4 and $d$ of order 3. The $2$-Sylow subgroup in $S$ is $Q=\langle a,a^d\rangle$ and $a^2$ is central in $S$.

Let $\mathbb{Z}$ and $\mathbb{Z}_p$ stand for the rings of integers and $p$-adic integers, respectively. Let $V$ be the $4$-dimensional free $\mathbb{Z}$-module. Further, for a prime $p$ let $V_p$ be the $4$-dimensional free $\mathbb{Z}_p$-module.

The group $S$ can be embedded into $GL(V)$ in such a way that $C_V(x)=0$ for each nontrivial $x\in S$. It follows that for any prime $p$ the group $S$  embeds into $GL(V_p)$ in such a way that $C_{V_p}(x)=0$ for each nontrivial $x\in S$. To see this simply observe that 1 is not an eigenvalue for $x$. 

Let $p$ be an odd prime and set $U=V_p\oplus V_2$. We will view $S$ as a group of automorphisms of $U$ and $V_p$ and $V_2$ as $S$-invariant subgroups. Note that $C_{U}(x)=0$ for each nontrivial $x\in S$. Of course $a^2$ acts on $U$ taking each $u\in U$ to $-u$. Let $H$ be the natural semidirect product of $U$ by $S$. Clearly, $C_H(d)=\langle da^2\rangle$. 
Choose any nonzero element $v\in V_2$ and consider the subgroup $G=\langle va,d,V_p\rangle$. Let $W=V_2\cap G$ and $N=W\oplus V_p$. It is shown in \cite{ppz} that $\langle va,d\rangle$ is an infinite profinite group with abelian centralizers. Since the quotient $\langle va,d\rangle/W$ is isomorphic to $SL(2,3)$, we have $G/N\cong SL(2,3)$. It follows that $G$ is a profinite CN-group (actually the centralizers in $G$ are abelian) with the maximal normal pronilpotent subgroup $N$.
\medskip

4. Finally, we will show that $G/O_2(G)$ can be a non-abelian simple group. Let $S$ be a finite simple group isomorphic to either $PSL(2,2^m)$ for some $m\geq2$ or the Suzuki group $Sz(q)$. Note that $S$ is a finite CN-group. The group $S$ is a $2'$-semiregular group, that is, there exists a finite-dimensional $S$-module $M$ over a finite field of characteristic 2 such that $C_M(x)=0$ for each nontrivial $2'$-element $x\in S$ (see \cite{fleis} for details on finite $p'$-semiregular groups). Let $V$ be a Cartesian product of infinitely many copies of the group of order $2$. Note that $V$ admits an action of $S$ by automorphisms such that $C_V(x)$ is trivial for any nontrivial element of odd order $x\in S$. Thus, the semidirect product $VS$ has a natural structure of a profinite CN-group with $V$ being the maximal normal pronilpotent subgroup.

\section{Acknowledgments}
The author is grateful to CNPq and FAPDF for financial support.

\label{bibliography}


\begin{thebibliography}{ABC}

\small


\bibitem{fetho}
{W. Feit and J. Thompson}, 
\emph{Solvability of groups of odd order}, 
{Pacific J. Math.},
{\bf 13} 
{(1963)}, 
{775--1029}.

\bibitem{fehatho}
W. Feit, J. Thompson, M. Hall, Finite groups in which the centralizer of any non-identity element is nilpotent, Math. Z., {\bf 74 (1)} (1960), 1–17.
\bibitem{fleis}
P. Fleischmann, W. Lempken, P. H. Tiep, Finite $p'$-Semiregular Groups, J. Aalgebra {\bf 188} (1997), 547--579.

\bibitem{go}
{D. Gorenstein},
\emph{Finite groups},
{Chelsea Publishing Co.},
{New York},
{1980}.

\bibitem{herfort} {W. N. Herfort},
\emph{Compact torsion groups and finite exponent}, Arch. Math., {\bf 33} 
(1979), 404--410.

\bibitem{Higman:57}
{G. Higman},
\emph{Groups and rings having automorphisms without non-trivial fixed elements}
{J. London Math. Soc.}, 
{\bf 32} 
{(1957)},
{321--334}. 
\bibitem{khu61} E. I. Khukhro, Locally nilpotent groups admitting a splitting automorphisms of prime order, Math. USSR-Sb., {\bf 58} (1987), 119-126.

\bibitem{khu2} E. I. Khukhro, Nilpotent Groups and their Automorphisms, \textit{Berlin: de Gruyter-Verlag}, 1993.

\bibitem{khumashu}
{E. Khukhro, N. Makarenko, and P. Shumyatsky},
\emph{Frobenius groups of automorphisms and their fixed points}, 
{Forum Math.}, 
{\bf 26} 
{(2014)}, 
{73--112}.

\bibitem{KhSh:15}
{E. Khukhro and P. Shumyatsky},
\emph{Nonsoluble and non-$p$-soluble length of finite groups},
{Israel J. Math.},
{\bf 207}
{(2015)},
{507--525}.



\bibitem{kz10}
{D. Kochloukova and P. Zalesskii},
\emph{Fully residually free pro-{$p$} groups},
{J. Algebra},
{\bf 324} 
{(2010)},
{782--792}.

\bibitem{kz11}
{D. Kochloukova and P. Zalesskii},
\emph{On pro-p analogues of limit groups via extensions of centralizers},
{Math. Z.},
{\bf 267}
{(2011)},
{109--128}.

\bibitem{rz}
{L. Ribes and P. Zalesskii},
\emph{Profinite groups},
{Springer},
{Berlin},
{2000}.


\bibitem{rob}
{D. Robinson},
\emph{A Course in the Theory of Groups},
{Springer},
{New York},
{1996}.

\bibitem{Shumyatsky:98} 
{P. Shumyatsky}, 
\emph{Centralizers in groups with finiteness conditions}, 
{J. Group Theory}, 
{\bf 1}
{(1998)},
275--282.
\bibitem{ppz} 
{P. Shumyatsky, P. Zalesskii, T. Zapata,} 
\emph{Profinite groups  in which  centralizers are abelian}, Israel J. Math., to appear.

\bibitem{suzu57}  
{M. Suzuki},
\emph{The nonexistence of a certain type of simple groups of odd order},
{Proc. Amer. Math. Soc.},
{\bf 8}
{(1957)}, 
{686--695}.

\bibitem{suzu61} {M. Suzuki}, Finite groups with nilpotent centralizers, Transactions of the American Mathematical Society, {\bf99 (3)} (1961), 425–470.
\bibitem{Thompson:59}
{J. Thompson},
\emph{Finite groups with fixed-point-free automorphisms of prime order},
{Proc. Nat. Acad. Sci. U.S.A.},
{\bf 45}
{(1959)},
{578--581}.
\bibitem{wil17}
{G. Wilkes},
\emph{Virtual pro-p properties of 3-manifold groups}, 
{J. Group Theory}, {\bf 20}, {Issue 5}, 999-1023.
2017.

\bibitem{Wilson:83}
{J. Wilson},
\emph{On the structure of compact torsion groups},
{Monatsh. Math.},
{\bf 96}
{(1983)},
{57--66}.


\bibitem{zz}
{P. Zalesskii and T. Zapata},
\emph{Profinite extensions of centralizers and the profinite completion of limit groups},
 arXiv:1711.01500.
{http://arxiv.org/abs/1711.01500}.

\bibitem{zelm}
{E. Zelmanov},
\emph{On periodic compact groups},
{Israel J. Math.},
{\bf 77}
{(1992)},
{83--95}.


\end{thebibliography}
\end{document}